\documentclass[12pt]{amsart}
\usepackage{amsfonts,latexsym,rawfonts,amsmath,amssymb,amsthm,graphicx,color}
\usepackage{mathrsfs}
\usepackage{enumitem}
\usepackage{comment}
\usepackage{url}
\numberwithin{equation}{section}
\newtheorem{theorem}{Theorem}[section]
\newtheorem{lem}[theorem]{Lemma}
\newtheorem{thm}[theorem]{Theorem}
\newtheorem{pro}[theorem]{Proposition}
\newtheorem{cor}[theorem]{Corollary}
\newtheorem{defi}[theorem]{Definition}

\newtheorem{exa}[theorem]{Example}

\def\s{\,\,\,\,}
\def\lan{\langle}
\def\ran{\rangle}
\def\dis{\displaystyle}

\def\mv{1.7ex}

\def\R{\mathbb{R}}

\def\vint{\mathop{\mathchoice%
		{\setbox0\hbox{$\displaystyle\intop$}\kern 0.22\wd0%
			\vcenter{\hrule width 0.6\wd0}\kern -0.82\wd0}%
		{\setbox0\hbox{$\textstyle\intop$}\kern 0.2\wd0%
			\vcenter{\hrule width 0.6\wd0}\kern -0.8\wd0}%
		{\setbox0\hbox{$\scriptstyle\intop$}\kern 0.2\wd0%
			\vcenter{\hrule width 0.6\wd0}\kern -0.8\wd0}%
		{\setbox0\hbox{$\scriptscriptstyle\intop$}\kern 0.2\wd0%
			\vcenter{\hrule width 0.6\wd0}\kern -0.8\wd0}}%
	\mathopen{}\int}
\allowdisplaybreaks[4]

\title[3-circle inequalities]{\bf Three-Circle Inequalities for  Almost Harmonic Functions on Cylinders}
\author[Y. Li \& W. Qi]
{Yuxiang Li, Wenjuan Qi}

\address[Yuxiang Li]{ Department of Mathematical Sciences, Tsinghua University, Beijing 100084, P.R. China}
	\email{liyuxiang@mail.tsinghua.edu.cn}

\address[Wenjuan Qi]{ Department of Mathematical Sciences, Tsinghua University, Beijing 100084, P.R. China}
	\email{qiwenjuan2026@163.com}
\date{}

\begin{document}
	
\begin{abstract}
   The three-circle method was introduced into the blow-up analysis of harmonic maps by Qing and Tian [Comm. Pure Appl. Math. 1997]. We extend this method to a class of \((\tau,\epsilon)\)-almost harmonic functions on long cylinders and establish three-circle inequalities for their weighted energies. These inequalities yield a dichotomy between exponential energy decay and asymptotic dominance of a critical Fourier mode, together with a characterization of the corresponding normalized limiting profiles. In the unweighted case, we prove convergence of the averaged neck curves and identify their limiting equations under additional structural assumptions. Applications include neck analysis and energy identities for harmonic maps, solutions of Rivi\`ere's equation, \(\alpha\)-harmonic maps, biharmonic approximations of harmonic maps, and Willmore surfaces.
\end{abstract}

\maketitle

	{\small
		\keywords {\noindent {\bf Keywords:} three-circle inequalities; almost harmonic functions; neck analysis; energy identities; weighted energy estimates.}
		\smallskip
		\newline
		\subjclass{\noindent {\bf 2020 Mathematics Subject Classification: 58E20; 35B40; 53A31}   }
	}
	\bigskip

\section{Introduction}
The three-circle lemma refers to a quantitative inequality of the form
\begin{equation}\label{eqn:qphi-intro}
\Phi_2 \leq e^{-qL}(\Phi_1 + \Phi_3),
\end{equation}
for a nonnegative sequence $\{\Phi_i\}_{i=1}^3$, 
in which case we say that $\{\Phi_i\}$ satisfies the three-circle lemma for $(q,L)$.
The usefulness of \eqref{eqn:qphi-intro} is that we can show for each $q'\in(0, q)$, 
 the existence of $L_1(q, q')$ such that if $L > L_1$, then 
$$
\text{either}\,\, \Phi_{i} \geq e^{-q'L} \Phi_{i-1}  \quad \text{or} \quad \Phi_{i} \geq e^{-q' L} \Phi_{i+1}
$$
and,
moreover
$$
\Phi_{i-1} \leq e^{-q' L} \Phi_{i}\ \Longrightarrow\ \Phi_{i} \leq e^{-q' L} \Phi_{i+1},
\qquad
\Phi_{i+1} \leq e^{-q' L} \Phi_{i}\ \Longrightarrow\ \Phi_{i} \leq e^{-q' L} \Phi_{i-1}.
$$
As a consequence, if a nonnegative sequence $\{\Phi_i\}_{i=1}^m$ 
satisfies the three-circle lemma for $(q,L)$ on every three consecutive indices, 
then there exists a constant $C>0$ such that
\[
\Phi_i \leq C\big( e^{-qi} + e^{-q(m-i)} \big),
\qquad 1 \leq i \leq m.
\]
In particular, it plays a key role in the study of removable 
singularities, gap phenomena, and provides a powerful tool for the analysis of neck regions in blow-up problems.

The terminology goes back to the classical three-circle theorem of Hadamard on the logarithmic convexity of the maximum modulus of holomorphic functions. In geometric blow-up analysis, the method was introduced by Qing and Tian \cite{QingTian1997} in their  study of the bubbling phenomenon for almost harmonic maps from surfaces. By expanding the maps in Fourier series along long cylinders, they established a three-circle inequality for the $L^2$-energies of the maps on three consecutive subcylinders, which forces the energy to decay exponentially in at least one direction. The method was developed further by Chen and Tian \cite{ChenTian1999}, who proved that harmonic maps---and, more generally, $\alpha$-harmonic maps---on long cylinders converge exponentially to closed geodesics of the target manifold, a fact that lies at the heart of the compactness theory for moduli spaces of harmonic maps.

However, unfortunately, this line of ideas was not systematically developed further in the study of harmonic maps after these works. One reason is that, in many situations, the blow-up analysis for harmonic maps can be effectively handled by alternative methods introduced by Sacks and Uhlenbeck, based on integral identities and associated ordinary differential inequalities (cf \cite{SacksUhlenbeck1981}, \cite{DingTian1995}, \cite{LiWang2010}). As a consequence, the three-circle lemma, while powerful, has not become a standard tool in the harmonic map literature.

More recently, the 3-circle lemma was adapted  to the study of Willmore surfaces \cite{LiYin2025,LiYinZhou2024}, 
where it provides an effective tool for analyzing neck regions.  The application of the three-circle lemma to Willmore surfaces 
suggests that these ideas admit a broader formulation. This 
naturally leads us to develop a unified framework that encompasses 
both harmonic maps and Willmore surfaces, and extends to more 
general two-dimensional elliptic systems that are close to being 
harmonic setting.

A standard strategy in neck analysis is to first control the energy in the angular direction and then use a Pohozaev identity to recover information on the radial energy. When such a
Pohozaev estimate is not available, the radial energy cannot be
controlled directly by the angular energy. In this case, one needs a
more careful estimate of the radial energy on long thin cylinders.
A direct ODE argument  can become quite
complicated. By contrast, the three-circle method provides a much simpler and more robust way to capture the propagation  of the radial energy along the neck. 

For this aim,  we introduce a class of functions 
that are close to being harmonic in a scale-dependent sense.

\begin{defi}\label{defi of almost hm}
Let $\tau\colon(0,r_0)\to \mathbb{R}^+$ be a positive function. 
We say that $u\in W^{2,p}(Q=Q_1\cup Q_2\cup Q_3,\R^n)$, $p>1$, is a 
$(\tau,\epsilon)$-almost harmonic function if for any  
$r<r_0$, the following estimates hold:
\begin{gather}\label{almost.harmonic.1}
\|\nabla^2 u\|_{L^p(S^1\times[r,3L-r])} 
\leq \tau(r)\,\|\nabla u\|_{L^2(Q)}, \\ \label{almost.harmonic.2}
\|\Delta u\|_{L^1(S^1\times[r,3L-r])} \leq \tau(r)\,\epsilon\,\|\nabla u\|_{L^2(Q)}.
\end{gather}
\end{defi}


Functions of this type arise naturally in the analysis of neck regions in blow-up problems.
We give several examples.
\medskip

\noindent\textbf{Example 1.} Solutions $u \in W^{1,2}(D, \mathbb{R}^n)$ to Rivi\`ere's equation $$\Delta u = \Omega \cdot \nabla u,\quad\Omega \in L^2(D,so_n \otimes \mathbb{R}^2).$$ By $\epsilon$-regularity, $u$ is almost harmonic when $\|\Omega\|_{L^2}$ is sufficiently small (cf. \cite{Riviere2007}).

\medskip

\noindent\textbf{Example 2.} 
Conformal immersions $f\colon D\to \mathbb{R}^n$, which satisfy
\[
\Delta f = \frac{H}{2}|\nabla f|^2,
\]
where $H$ denotes the mean curvature. Such maps are almost harmonic 
when the Willmore energy is sufficiently small (cf. \cite{ChenLi2014}).

\medskip

\noindent\textbf{Example 3.} 
Harmonic maps $u\colon D\to M$, where $M$ is a smooth compact submanifold of $\mathbb{R}^n$. 
By $\epsilon$-regularity, $u$ is almost harmonic when 
$\|\nabla u\|_{L^2}$ is sufficiently small (cf. \cite{Schoen1984}).

\medskip

\noindent\textbf{Example 4.} 
$\alpha$-harmonic maps are almost harmonic when both $(\alpha-1)$ 
and $\|\nabla u\|_{L^2}$ are sufficiently small (cf. \cite{SacksUhlenbeck1981}).

\medskip

\noindent\textbf{Example 5.} 
Let $f$ be a conformal Willmore immersion from $D$ into $\mathbb{R}^n$, 
and let $H$ denote its mean curvature. Then $H$ is almost harmonic 
when $\|A\|_{L^2}$ is sufficiently small (cf.\ Section~4). To the best of our knowledge, 
this observation has not been explicitly noted in the literature.

\medskip


\noindent\textbf{Example 6.} 
Let $u\colon D\to\mathbb{R}^3$ be a conformal parametrization of a surface with prescribed mean curvature $H$. In isothermal coordinates, $u$ satisfies the $H$-surface equation \[
\Delta u=2H(u)\,(u_x\times u_y).
\]
Consequently, $u$ is almost harmonic when
$\|H(u)\nabla u\|_{L^2}$ is sufficiently small
(cf. \cite{Riviere2007}).

\medskip

\noindent\textbf{Example 7.} 
Let $u\colon D\to N\subset\mathbb{R}^{n}$ be a magnetic harmonic map, where $N$ is compact and equipped with a smooth closed $3$-form. By $\epsilon$-regularity, $u$ is almost harmonic when $\|\nabla u\|_{L^{2}(D)}$ is sufficiently small (cf. \cite{Branding2015}).

\medskip

\noindent\textbf{Example 8.} 
Let \(f\colon (M,\gamma)\to N\subset\mathbb R^n\) be affine harmonic, with \(M\) a K\"ahler affine surface and \(N\) a compact Riemannian submanifold. In affine coordinates, 
\[
\tau_\gamma(f)
=-\gamma^{\alpha\beta}
\Gamma^\delta_{\alpha\beta}(\gamma)\,\partial_\delta f
=:L\cdot\nabla f,
\]
Hence, \(f\) is almost harmonic when both \(\|\nabla f\|_{L^2}\) and \(\|L\|_{L^2}\) are sufficiently small (cf. \cite{JS09}).

\medskip

To incorporate energies arising in problems such as Willmore surfaces, 
it is natural to consider weighted energy functionals on cylinders. 
Given $Q_{a,b}=S^1\times [a,b]$ and $m\in \mathbb{Z}$, we define
\[
E(u,Q_{a,b})=\int_{Q_{a,b}} e^{-2mt}|\nabla u|^2.
\]
We write $u$ in Fourier series along the $S^1$-direction:
\[
u(t,\theta)=\frac{a_0(t)}{2}+\sum_{k=1}^\infty (a_k(t)\cos k\theta+b_k(t)\sin k\theta).
\]
We introduce
\[
\hat b_0=0,\qquad \hat a_0=a_0',\qquad 
\hat a_p=a_{|p|}'+p\,a_{|p|},\qquad 
\hat b_p=b_{|p|}'+p\,b_{|p|},
\]
for $p=\pm1,\pm2,\cdots$.
A direct computation shows that the weighted energy can be written as
\[
E(u,Q_{a,b})
=\frac{\pi}{2}\sum_{p=-\infty}^\infty \int_a^b e^{-2mt}
\big(|\hat a_p|^2+|\hat b_p|^2\big)\,dt.
\]
When $u$ is harmonic, we have
$$
\hat a_p=c_pe^{pt},\s\hat b_p=c_p'e^{pt}, \s p\neq 0,
$$
where $c_p$ and $c_p'$ are constants. From this formula it immediately follows that the critical Fourier mode $(\hat a_m,\hat b_m)$ is the obstruction to the three-circle inequality, since the corresponding summand $e^{2(p-m)t}$ is constant in $t$ when $p=m$.Obviously. 
When this mode vanishes, the three-circle lemma for $E(u,Q_i)$ holds, where $Q_i=S^1\times[(i-1)L,iL]$. 

For a general function $u$, the three-circle inequality holds 
provided that the coefficients $\hat a_m$ and $\hat b_m$ are 
negligible compared to the energy $E(u,Q_i)$. Motivated by this, we introduce the quantities
\[
\lambda_i
= \frac{e^{-mt_i}\big(\hat a_m(t_i)^2+\hat b_m(t_i)^2\big)^{1/2}}{E(u,Q_i)^{1/2}},
\qquad i=1,2,3,
\]
where $t_i=(i-\frac{1}{2})L$.
We will show that if $\max\{\lambda_1,\lambda_2,\lambda_3\}$ and $\epsilon$ are sufficiently small, 
the energies $E(u,Q_i)$ satisfy the three-circle inequality.

We next consider the case where the quantities $\lambda_i$ 
are bounded from below. 
To this end, we introduce a modified energy $E^*$ as follows:
\[
E^*(u,Q_{a,b})
= \frac{\pi}{2} \int_a^b e^{-2mt} \big(|\hat a_m|^2 + |\hat b_m|^2\big)\, dt.
\]
In the simplest case $m=0$, this corresponds to the 
energy of the average component, namely the $a_0'$ part, while the angular 
energy is controlled by $E(u,Q)-E^*(u,Q)$, i.e.
\[
\int_{Q_{a,b}}|u_\theta|^2\leq (E-E^*)(u,Q_{a,b}).
\]
Consequently, we define quantities 
\[
\nu_i
= \frac{(E-E^*)(u,Q_i)^{1/2}}{e^{-mt_i}\big(\hat a_m(t_i)^2+\hat b_m(t_i)^2\big)^{1/2}},
\qquad i=1,2,3.
\]
When $\nu_i>a$ for some fixed $a>0$, and $\epsilon$ 
is sufficiently small, the difference $(E-E^*)(u,Q_i)$ and $\nu_i$ satisfy the three-circle 
inequality.

We now turn to the analysis on long cylinders. 
Consider $u\colon S^1\times [0,mL]\rightarrow\R^n$ that satisfies \eqref{almost.harmonic.1} and \eqref{almost.harmonic.2}. From an intuitive perspective,  we decompose $S^1\times [0,mL]$ as
\[
S^1\times [0,s_1L]\;\cup\;S^1\times [s_1L,s_2L]\;\cup\; S^1\times [s_2L,s_3L]\;\cup\; S^1\times [s_3L,s_4L]\;\cup\; S^1\times [s_4L,mL].
\]
On the end regions $S^1\times [0,s_1L]$ and $S^1\times [s_4L,mL]$, 
the energy $E$ decays exponentially from the two ends toward the interior. 
On the intermediate regions $S^1\times [s_1L,s_2L]$ and $S^1\times [s_3L,s_4L]$, 
 $E-E^*$ together with  $\nu_i$, 
decays exponentially toward the center. 
In the central region $S^1\times [s_2L,s_3L]$, the energy is essentially 
concentrated in the critical mode, so that $E$ is well approximated by $E^*$, 
while $\nu_i$ is small and $\pi L\lambda_i^2/2$ is close to $1$.

By the above picture, we are led to consider sequences 
of maps $u_k\colon [0,m_k L]\times S^1\to \mathbb{R}^n$, where $m_k\rightarrow+\infty$.
The preceding analysis reveals a 
dichotomy between exponential decay and concentration of the critical 
mode. This dichotomy is quantified in the following main theorem.


\begin{thm}\label{main1}
For any $q\in(0,2)$, we can find $L$ such that the following holds. Let $u_k\colon [0,m_k L]\times S^1\to\mathbb{R}^n$ be a sequence of $(\tau,\epsilon_k)$-almost harmonic functions with 
$\epsilon_k\rightarrow 0$. Define
$$
\Theta_k=E(u_k,([0,L]\cup[(m_k-1)L,m_kL])\times S^1).
$$
Then, after passing to a subsequence, for any $\gamma>0$, either we have
\begin{equation}\label{eqn:trivialenergy}
\lim_{k\rightarrow+\infty}\sum_{i=1}^{m_k}E^\gamma(u_k,Q_i)\leq C\Theta_k^\gamma,
\end{equation}
or we can find integers $0<\mathfrak a_k<\mathfrak b_k<m_k$ with $\mathfrak{b}_k-\mathfrak{a}_k\rightarrow+\infty$ such that 
\begin{equation}\label{eqn:trivialends}
\lim_{k\rightarrow+\infty} (\sum_{i=1}^{\mathfrak{a}_k}+\sum_{i=\mathfrak b_k}^{m_k})E^\gamma(u_k,Q_i)\leq C\Theta_k^\gamma;
\end{equation}
and  
\begin{equation}\label{eq:limit of lambda_i,nu_i}
\lim_{r\to +\infty}\lim_{k\rightarrow+\infty}\max_{\mathfrak a_k+r<i<\mathfrak b_k-r}\left(\left|\frac{\pi L}{2}\lambda_i^2-1\right|+\nu_i+\left|\frac{E^*(u_k,Q_i)}{E(u_k,Q_i)}-1\right|\right)=0.
\end{equation}
Moreover, let \(t_k\in\mathbb Z^+\) be any sequence satisfying 
\[
t_k-\mathfrak a_k\to+\infty
\qquad\text{and}\qquad
\mathfrak b_k-t_k\to+\infty,
\]
and set \(s_k=\left(t_k-\frac12\right)L\). Then, after passing to a further subsequence, there exist
$c_k,A,B\in\mathbb{R}^n$ such that
\[
\begin{aligned}
\frac{e^{-ms_k}\bigl(u_k(s_k+t,\theta)-c_k\bigr)}
{\sqrt{E(u_k,Q_{t_k})}}
&\rightharpoonup
e^{mt}\bigl(A\cos m\theta+B\sin m\theta\bigr)~~~\text{in }W_{\mathrm{loc}}^{2,p}(\mathbb{R}\times S^1),
\quad\text{when }m\neq0,
\end{aligned}
\]
and
\[
\frac{u_k(s_k+t,\theta)-c_k}
{\sqrt{E(u_k,Q_{t_k})}}
\rightharpoonup
At
\quad\text{in }W_{\mathrm{loc}}^{2,p}(\mathbb{R}\times S^1),
\quad\text{when }m=0.
\]
\end{thm}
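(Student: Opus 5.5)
The plan is to derive the theorem from the two three-circle inequalities described in the introduction. The first is for $E(u,Q_i)$ when $\max\lambda_i$ and $\epsilon$ are small. The second is for $(E-E^*)(u,Q_i)$ and $\nu_i$ when $\nu_i>a$ and $\epsilon$ is small. Both will be used as black boxes, with a fixed $q\in(0,2)$ and a large $L$. The exponent range $q<2$ comes from the harmonic model: for $p\ne m$ the mode $\hat a_p$ carries the weight $e^{2(p-m)t}$, so the slowest decay rate is $2$.

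\textbf{Step 1: classify the indices.} For each $k$, split the indices $1\le i\le m_k$ into two types. Type (I) consists of those $i$ where the triple $(i-1,i,i+1)$ satisfies the hypotheses of the $E$-three-circle inequality. Type (II) consists of those $i$ with $\lambda_i\ge\delta$. Using the propagation consequence stated in the introduction (decay in one direction forces decay further on), on any maximal run of type (I) indices the sequence $E(u_k,Q_i)$ is bounded by $C(e^{-q'i}+e^{-q'(m_k-i)})$ times the energy at the ends of the run. Hence if every index is of type (I), summing the geometric bounds for $E^\gamma$ gives \eqref{eqn:trivialenergy}.

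\textbf{Step 2: the non-decaying case.} Otherwise, let $\mathfrak a_k$ be the first index and $\mathfrak b_k$ the last index where $\lambda_i\ge\delta$. The end regions $[1,\mathfrak a_k]$ and $[\mathfrak b_k,m_k]$ are then of type (I), and Step 1 gives \eqref{eqn:trivialends}.

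Between $\mathfrak a_k$ and $\mathfrak b_k$ I need to show three things:
\begin{itemize}
\item $\lambda$ stays bounded below;
\item $\nu_i$ decays toward the center, by the second three-circle inequality combined with its own propagation lemma;
\item $\mathfrak b_k-\mathfrak a_k\to\infty$.
\end{itemize}
For the first point, I would use the fact that $\hat a_m e^{-mt}$ is nearly constant when $\epsilon$ is small: $(e^{-mt}\hat a_m)'$ is the $m$-th Fourier coefficient of $e^{-mt}\Delta u$, which is small in $L^1$ by \eqref{almost.harmonic.2}. This near-constancy forces $E^*$ to dominate once $E-E^*$ has decayed. It also gives $\frac{\pi L}{2}\lambda_i^2\to1$, since $E^*(Q_i)\approx \frac{\pi L}{2}e^{-2mt_i}|\hat a_m(t_i)|^2$. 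The ratio $E^*/E\to1$ follows because $\nu_i\to0$. The divergence $\mathfrak b_k-\mathfrak a_k\to\infty$ should follow from the exponential decay of $\nu_i$ together with its lower barrier $a$. If the gap stayed bounded, I would redefine $\mathfrak a_k,\mathfrak b_k$ and absorb the bounded block into the end estimates, using the interior $L^\infty$ control from \eqref{almost.harmonic.1}.

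\textbf{Step 3: the limiting profile.} Normalize
\[
v_k(t,\theta)=\frac{e^{-ms_k}\bigl(u_k(s_k+t,\theta)-c_k\bigr)}{\sqrt{E(u_k,Q_{t_k})}},
\]
with $c_k$ the mean of $u_k$ on $Q_{t_k}$. The ratios $E(Q_{t_k+j})/E(Q_{t_k})$ are bounded uniformly in $j$ on compact ranges, because near-constancy of the critical mode makes them approximately $1$. Poincaré's inequality, together with \eqref{almost.harmonic.1} on each compact set, then bounds $v_k$ in $W^{2,p}_{\mathrm{loc}}$. We extract a weak limit $v$, and $v$ is harmonic since $\|\Delta u_k\|_{L^1}\to0$ by \eqref{almost.harmonic.2}.

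By lower semicontinuity and the fact that the non-critical energy $E-E^*$ tends to $0$ after normalization (this is $\nu_i\to0$), every Fourier mode of $v$ other than the critical one has vanishing weighted energy. For $p\ne m$ this gives $\hat a_p\equiv0$, so $a_{|p|}=Ce^{-pt}$. Such a mode is killed by requiring that the corresponding modes with index $-p$ also vanish, unless $|p|=m$. So only the $m$-th mode with $\hat a_{-m}=0$ and $a_m'-ma_m$-type constraints survive. For $m\ne0$ this leaves $e^{mt}(A\cos m\theta+B\sin m\theta)$ after renormalization. For $m=0$ only $a_0'=$ const survives, and subtracting constants gives $At$.

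The main obstacle I expect is Step 2. The difficulty is showing rigorously that the transition between the regimes $\lambda$ small and $\nu$ small happens only in bounded zones, so that a single pair $\mathfrak a_k,\mathfrak b_k$ captures everything. This requires handling indices where neither three-circle hypothesis holds. I would first try to prove that such indices are isolated: at such an index both $\lambda_i$ and $\nu_i$ are of order one, and the near-constancy of $e^{-mt}\hat a_m$ propagates $\lambda\ge\delta$ one step further along the cylinder.
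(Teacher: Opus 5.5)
Your Step 2 has a genuine gap. It sits exactly where you expect the main obstacle, and the remedy you suggest points the wrong way.

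\textbf{The missing idea.} Energy cannot grow toward a block where $\lambda$ is large. Precisely, suppose $\lambda_i<\beta_0$. The $E$-three-circle inequality then gives $E(u,Q_i)\le e^{-q'L}E(u,Q_{i+1})$ or the mirror inequality. In the first case, suppose also $\lambda_{i+1}\ge\beta_0$. Lemma~\ref{est.ahat} gives $|\mathcal A_{i+1}-\mathcal A_i|\le C\epsilon\sqrt{E(u,Q_i\cup Q_{i+1})}\le C\epsilon\,\mathcal A_{i+1}/\beta_0$. Hence $\mathcal A_{i+1}\le 2\mathcal A_i$ and $\lambda_{i+1}\le 2e^{-q'L/2}\lambda_i<\beta_0$, a contradiction. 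The growth persists at $i+1$ by the usual three-circle propagation, so $\lambda_j<\beta_0$ for all $j\ge i$.

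This gives the two facts your Step 2 uses without proof:
\begin{enumerate}
\item The set $\{i:\lambda_i\ge\beta_0\}$ is an interval $[\mathfrak a_k+1,\mathfrak b_k-1]$. So $\lambda\ge\beta_0$ in the centre holds by definition. Your derivation of this lower bound from near-constancy of $e^{-mt}\hat a_m$ is circular, because the decay of $E-E^*$ needs $\min\lambda\ge\beta_0$ as a hypothesis.
\item On $[1,\mathfrak a_k]$ the energy decreases monotonically from $Q_1$: $E(u_k,Q_i)\le Ce^{-iq'L}E(u_k,Q_1)$. Without this, your Step~1 bound on the end run involves the energy at its inner end, which is not a priori controlled by $\Theta_k$. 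Then \eqref{eqn:trivialends} does not follow.
\end{enumerate}

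\textbf{Why your fix fails.} The claim ``$\lambda\ge\delta$ propagates one step further'' is false. At the edges of the centre the energy grows by $e^{q'L}$ per block outward while $\mathcal A_i$ stays nearly constant, so $\lambda$ drops; take e.g.\ $t+\eta e^{t}\cos\theta$ with $m=0$. Moreover, the first inequality holds under the weaker hypothesis $\min\{\lambda_{i-1},\lambda_i,\lambda_{i+1}\}<\beta_0$. This is because for the harmonic limit $e^{-mt}\hat a_m$ is constant, so vanishing at one $t_{i_0}$ kills the critical mode. With that form the two hypotheses are complementary in $\lambda$. Triples satisfying neither have all $\lambda\ge\beta_0$ and all $\nu\le a$: they are good central blocks, not transition indices.

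\textbf{Further corrections.}
\begin{enumerate}
\item The divergence $\mathfrak b_k-\mathfrak a_k\to\infty$ is not something to prove. After passing to a subsequence, you split into a bounded-gap case, which yields \eqref{eqn:trivialenergy}, and a divergent-gap case.
\item In the bounded-gap case the absorption into $C\Theta_k^\gamma$ cannot come from \eqref{almost.harmonic.1}. That estimate only compares a block with its own three-block neighbourhood and cannot exclude block-to-block exponential growth. The absorption comes from:
\begin{itemize}
\item the monotone end decay, which gives $E(u_k,Q_{\mathfrak a_k})\le C\Theta_k$;
\item $\mathcal A_{\mathfrak a_k+1}\le C\sqrt{E(u_k,Q_{\mathfrak a_k})}$, using $\lambda_{\mathfrak a_k}<\beta_0$ and Lemma~\ref{est.ahat};
\item the ratio bound $\frac12\le\mathcal A_{i+1}/\mathcal A_i\le 2$ in the centre;
\item $E(u_k,Q_i)\le\mathcal A_i^2/\beta_0^2$.
\end{itemize}
\item The decay of $\nu_i$ uses an upper barrier, not the threshold $a$. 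Apply the $(E-E^*)$-inequality with a small threshold $\kappa_r$. In the centre $\nu_i\le 1/\lambda_i\le 1/\beta_0$, since $E-E^*\le E$. If $\nu_i>\kappa_r$ at distance more than $r$ from the edges, $\nu$ grows by $e^{q'L/4}$ per block for $r-2$ blocks and breaks this barrier.
\end{enumerate}

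Your Step~3 (normalization, local energy comparability, harmonic limit with vanishing non-critical modes, zero mean) is fine and matches the paper's argument.
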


Theorem \ref{main1} can be used to analyze neck regions arising in blow-up phenomena. 
We first derive the following corollary.
\begin{cor}\label{intr.cor}
Let $u_k$ be as in Theorem \ref{main1} and assume $E(u_k)<\Lambda$. Then
$$
\lim_{T\rightarrow+\infty}\lim_{k\rightarrow+\infty}(E-E^*)(u_k,[T,m_kL-T]\times S^1)=0
$$
\end{cor}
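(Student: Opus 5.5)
We deduce the corollary from Theorem~\ref{main1} with $\gamma=1$, arguing by contradiction along a subsequence. Suppose there are $\delta>0$, $T_j\to+\infty$ and (after relabelling) a subsequence $u_k$ with $(E-E^*)(u_k,[T_j,m_kL-T_j]\times S^1)\ge\delta$ for all large $k$. We may take $T_j=r_jL$ with $r_j\in\mathbb Z^+$, since enlarging $T_j$ only decreases the quantity. Because $E(u_k)<\Lambda$, the quantities $\Theta_k$ are bounded. Apply Theorem~\ref{main1} and pass to the subsequence given there. We then bound $(E-E^*)$ separately on the two end zones and on the middle zone.

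\textbf{End zones.} The first alternative \eqref{eqn:trivialenergy} can be read as the degenerate case of the second, with no middle region. For the ends we need more than \eqref{eqn:trivialends} gives directly, since it only bounds the total by $C\Theta_k$. The information we really need, and which the proof of Theorem~\ref{main1} provides through the three-circle lemma for $(q,L)$, is exponential decay:
\[
E(u_k,Q_i)\le C\Theta_k\bigl(e^{-qiL}+e^{-q(\mathfrak a_k-i)L}\bigr)
\]
for $i\le\mathfrak a_k$, with the symmetric bound near $m_k$. Here the second term accounts for growth toward the middle region. This does not yet give smallness near $\mathfrak a_k$. To handle that, near $\mathfrak a_k$ we switch to the second quantity $E-E^*$, which by the intermediate-region analysis decays toward the center: $(E-E^*)(u_k,Q_i)\le C\Lambda e^{-q|i-\mathfrak a_k|L}$ on the transition. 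Combining the two decays, the sum of $(E-E^*)(u_k,Q_i)$ over $r_j\le i\le\mathfrak a_k+r$ is at most $C\Lambda e^{-qr_jL}$ plus a quantity that vanishes as $k\to\infty$ and then $r\to\infty$. The zone near $\mathfrak b_k$ is handled symmetrically.

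\textbf{Middle zone.} For $\mathfrak a_k+r<i<\mathfrak b_k-r$, \eqref{eq:limit of lambda_i,nu_i} gives $(E-E^*)(u_k,Q_i)\le\eta_{k,r}E(u_k,Q_i)$ with $\eta_{k,r}\to0$. Summing over $i$ and using $\sum_i E(u_k,Q_i)\le\Lambda$, the total over the middle zone is at most $\eta_{k,r}\Lambda\to0$. The part of the middle zone within distance $r$ of $\mathfrak a_k,\mathfrak b_k$ is covered by the transition estimate in the end-zone step.

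\textbf{Main obstacle.} The delicate step is the uniform smallness of $E-E^*$ on the $O(r)$ boundary layers near $\mathfrak a_k$ and $\mathfrak b_k$. Pointwise control there comes only in the iterated limit, and \eqref{eqn:trivialends} is weaker than needed. I would first try to extract from the proof of Theorem~\ref{main1} the fact that $\nu_i$ and $E-E^*$ obey the three-circle inequality on the intermediate regions, where $\nu_i>a$. If $\nu_i\le a$, then $(E-E^*)(u_k,Q_i)\le a^2\cdot\frac{2}{\pi L}E^*$-type bounds hold, and these sum to at most $Ca^2\Lambda$. Letting $a\to0$ after $k\to\infty$ then closes the argument. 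Together with the end-zone and middle-zone bounds, this contradicts the assumed lower bound $\delta$.
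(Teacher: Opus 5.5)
There is a genuine gap, and it sits in the boundary layers you yourself flag as the main obstacle. On the first few central blocks $Q_{\mathfrak a_k+1},\dots,Q_{\mathfrak a_k+r}$ (symmetrically near $\mathfrak b_k$, and on the whole central region in the alternative \eqref{eqn:trivialenergy}), $\nu_i$ need not be small. There the only bound available is $(E-E^*)(u_k,Q_i)\le E(u_k,Q_i)\le C_rE(u_k,Q_{\mathfrak a_k})$, from Lemma~\ref{est.ahat} and Lemma~\ref{lem:ratio of lambda}. So everything hinges on $E(u_k,Q_{\mathfrak a_k})$ being small whenever these blocks meet $[T_j,m_kL-T_j]\times S^1$, and your argument never shows this.

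Each of your three estimates fails to supply that smallness:
\begin{itemize}
\item Your end-zone bound has the extra growth term $e^{-q(\mathfrak a_k-i)L}$, so it only gives $E(u_k,Q_{\mathfrak a_k})\le C\Theta_k$. That is bounded, not small; it does not even make $\sum_{r_j\le i\le\mathfrak a_k}E(u_k,Q_i)$ small.
\item The transition estimate $(E-E^*)(u_k,Q_i)\le C\Lambda e^{-q|i-\mathfrak a_k|L}$ says nothing for $|i-\mathfrak a_k|=O(1)$. Summed over $r_j\le i\le\mathfrak a_k+r$, it contains terms of size $C\Lambda e^{-qL}$ as soon as $\mathfrak a_k+1\ge r_j$. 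So your claimed bound $C\Lambda e^{-qr_jL}+o(1)$ does not follow.
\item The fix in your last paragraph has the same defect. The bound $(E-E^*)\le Ca^2E$ covers only blocks with $\nu_i\le a$. On blocks with $\nu_i>a$, the three-circle inequality for $E-E^*$ only propagates decay inward from the edge value $(E-E^*)(u_k,Q_{\mathfrak a_k+1})$, which is again uncontrolled.
\end{itemize}

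The missing idea is a way to make the energy at the edge of the central region small. The paper does this by choosing new ends via pigeonhole. For $\epsilon>0$, the bound $E(u_k)<\Lambda$ gives $c_k\le\lfloor\Lambda/\epsilon\rfloor+1$ with $E(u_k,Q_{c_k}\cup Q_{m_k-c_k+1})<\epsilon$. One then runs the long-cylinder decomposition on $[(c_k-1)L,(m_k-c_k+1)L]\times S^1$, whose end energy is now $<\epsilon$. The end zones then carry energy at most $C\epsilon$, and the first and last $r$ central blocks at most $C_r\epsilon$. On the remaining blocks, $E-E^*$ is at most $\delta_r\Lambda$. Since $c_k$ is bounded independently of $k$, $[T,m_kL-T]$ lies in the truncated cylinder for $T$ large; one lets $\epsilon\to0$, then $r\to\infty$.

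Alternatively, you can repair your own scheme with the one-sided decay that Theorem~\ref{thm:long.cylinder}(1) actually gives: $E(u_k,Q_i)\le Ce^{-iq'L}E(u_k,Q_1)$ for $i\le\mathfrak a_k$, with no growth term. A growth step at some $i\le\mathfrak a_k$ would propagate and force $\lambda_{\mathfrak a_k+1}<\beta_0$. With this, the $r$ blocks after $\mathfrak a_k$ carry at most $C_r\Lambda e^{-q'\mathfrak a_kL}$. That is small whenever they meet $[T_j,m_kL-T_j]\times S^1$, i.e.\ whenever $\mathfrak a_k\ge r_j-r$.
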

We remark that, to derive similar conclusions for the full energy $E(u)$, 
one needs additional structural properties, such as 
Pohozaev-type identities or conservation laws.

We next consider the case $m=0$ and assume that 
\begin{equation}\label{Ding-Tian reduction}
\sup_i E(u_k,Q_i)\to 0\qquad \text{as}~k\to\infty.   
\end{equation}
We are interested in
the convergence of the images $u_k(S^1\times [0,m_kL])$. 
Such problems arise naturally in the blow-up analysis of neck regions. 

In this case, the images converge, in a suitable sense, 
to a one-dimensional set determined by the averages, i.e., 
the image of $u_k$ is in a small neighborhood 
of 
\[
u_k^*(t)=\frac{1}{2\pi}\int_{S^1}u_k(t,\theta)\,d\theta.
\]
Therefore, it is sufficient to study the behavior of $u_k^*$, or its image $\Gamma_k$. 

We obtain an estimate on its length (see \eqref{eqn:length1} and \eqref{eqn:length2}).
Then we discuss convergence. Both the convergence and the identification 
of the limiting equation require additional assumptions.

First, we assume that for any $\mathfrak a_k\leq i\leq \mathfrak b_k$,
\begin{equation}\label{eqn:dec.Delta} \int_{Q_i}|\Delta u_k|\leq CE(u_k,Q_{i-1}\cup Q_i\cup Q_{i+1}). 
\end{equation}
Under this assumption, the averaged curves $\Gamma_k$ exhibit a compactness property. More precisely, for any choice of points $\mathfrak a_k < c_k < \mathfrak b_k$, the pointed curves $(\Gamma_k, u_k^*(c_k))$ converge, after passing to a subsequence, in the $C^1_{\mathrm{loc}}$ topology.

Furthermore, under an additional structural assumption on $\Delta u_k$, one can identify the equation satisfied by the limiting curve.


\begin{thm}\label{main2}
Let $u_k\colon [0,m_k L]\times S^1\to\mathbb{R}^n$ be a sequence of $(\tau,\epsilon_k)$-almost harmonic functions with 
$\epsilon_k\rightarrow 0$. Assume that \eqref{Ding-Tian reduction} and \eqref{eqn:dec.Delta} hold, and that the lengths of $\Gamma_k$ do not converge to $0$.

Let $\mathfrak a_k < c_k < \mathfrak b_k$, and define
\[
a = \lim_{k\to\infty} \ell(\Gamma_k,[0,c_k]),
\qquad
b = \lim_{k\to\infty} \ell(\Gamma_k,[c_k,m_kL]).
\]
Then, after reparametrization by arc length $s = \int_{c_k}^t |\dot u_k^*|$, the curves $u_k^*(s)$ converge in $C^1_{loc}((-a,b))$ to a limit curve $\gamma$.

Moreover, if $\Delta u_k$ admits a decomposition
\[
\Delta u_k = F_{0,k} + G_k,
\]
where
\[
\int_{Q_i} |G_k|
\le C \epsilon_k E(u_k,Q_{i-1}\cup Q_i\cup Q_{i+1}),
\]
and $F_{0,k}(p,X,Y)=\alpha_k^{ij}(p)X_iY_j$ converges uniformly to $F_0$, then the limit curve $\gamma$ satisfies
\[
-\int_a^b \varphi'(s)\,\frac{d\gamma}{ds}\, ds
=
\int_a^b \varphi(s)
\left(
F_0(\gamma)(\gamma',\gamma')
- \langle F_0(\gamma)(\gamma',\gamma'), \gamma' \rangle \gamma'
\right) ds
\]
for all $\varphi \in \mathcal{D}(a,b)$.
\end{thm}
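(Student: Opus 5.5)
We would work throughout with $m=0$ and take from Theorem \ref{main1} the neck region $\mathfrak a_k<i<\mathfrak b_k$. There \eqref{eq:limit of lambda_i,nu_i} says the energy is carried by the average mode $a_0'$, so $|\dot u_k^*|^2$ is comparable to $E(u_k,Q_i)/(2\pi L)$ and the angular part $(E-E^*)$ is negligible. The proof then has three steps.

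\textbf{Step 1: compactness in arc length.} Write $u_k^*(t)=\frac{1}{2\pi}\int_{S^1}u_k\,d\theta$. Averaging over $S^1$ gives $\ddot u_k^*=\frac{1}{2\pi}\int_{S^1}\Delta u_k\,d\theta$. Hence, by \eqref{eqn:dec.Delta},
\[
\int_{Q_i}|\ddot u_k^*|\,dt\le C E(u_k,Q_{i-1}\cup Q_i\cup Q_{i+1}).
\]
By \eqref{eq:limit of lambda_i,nu_i} and the almost harmonic estimate \eqref{almost.harmonic.1}, which gives $W^{2,p}$ control and hence pointwise comparability on each $Q_i$, we have $|\dot u_k^*(t)|\approx \sqrt{E(u_k,Q_i)/(2\pi L)}$ for $t\in Q_i$. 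Also $\ell(\Gamma_k,Q_i)\approx\sqrt{2\pi L\,E(u_k,Q_i)}$. Switch to arc length $s$ and write $T_k=du_k^*/ds=\dot u_k^*/|\dot u_k^*|$. Then
\[
\int_{Q_i}\Big|\frac{dT_k}{ds}\Big|\,ds\le \int_{Q_i}\frac{|\ddot u_k^*|}{|\dot u_k^*|}\,dt\le C\frac{E(Q_{i-1}\cup Q_i\cup Q_{i+1})}{\sqrt{E(Q_i)}}\le C'\,\ell(\Gamma_k,Q_i).
\]
The last inequality uses that $E(Q_{i\pm1})/E(Q_i)$ is bounded in the neck; this follows from $\lambda_i\to$ const together with the exponential-type (three-circle) comparison. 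So $T_k$ has bounded variation per unit arc length. One also needs $\ell(\Gamma_k,Q_i)\to0$ uniformly by \eqref{Ding-Tian reduction}. Then $\ell(\Gamma_k,[0,c_k])\to a$ and the end pieces (where \eqref{eqn:trivialends} applies) contribute length to the ends only.

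\textbf{Step 2: upgrade to $C^1_{loc}$.} Bounded variation alone only gives $BV$ compactness of $T_k$. To get $C^1_{loc}$, we would use the decomposition $\Delta u_k=F_{0,k}+G_k$ in the second part of the theorem. In the first part we use the sharper fact that $\ddot u_k^*$ is, up to an $L^1$-small error, $\frac{1}{2\pi}\int F(\nabla u,\nabla u)$. This is quadratic in $\nabla u$, so its mass per $Q_i$ is of order $E(Q_i)\sim \ell^2/L$, and it has no atoms because $E(Q_i)\to0$. This gives equicontinuity of $T_k$ in $s$, and Arzel\`a--Ascoli gives $u_k^*(s)\to\gamma$ in $C^1_{loc}((-a,b))$. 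I expect this step to be the main obstacle. Without a structural assumption, \eqref{eqn:dec.Delta} alone only rules out concentration of $\int|dT_k|$ on single cells; one has to argue through the non-atomic limit measure that the limit of $T_k$ is continuous.

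\textbf{Step 3: limit equation.} Fix $\varphi\in\mathcal D(a,b)$ and pull it back to $\varphi_k(t)=\varphi(s_k(t))$. Test the averaged equation:
\[
-\int\dot\varphi_k\dot u_k^*\,dt=\int\varphi_k\,\frac{1}{2\pi}\int_{S^1}(F_{0,k}+G_k).
\]
Since $\dot\varphi_k=\varphi'(s)|\dot u^*_k|$, the left side equals $-\int\varphi'(s)\,T_k|\dot u_k^*|\,ds$. The $G_k$ term is $\le C\epsilon_k\sum E(Q_i)\to0$; here the total energy on the support is bounded through length, using $E(Q_i)\lesssim\ell(Q_i)^2/L\le\ell(Q_i)$. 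In the $F_{0,k}$ term, replace $\nabla u_k$ by $(\dot u_k^*,0)$, with error controlled by $\nu_i\to0$, so it becomes $\int\varphi\,F_{0,k}(u_k^*)(T_k,T_k)|\dot u_k^*|\,ds$.

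A factor $|\dot u_k^*|$ appears on both sides but varies in $s$. To eliminate it, differentiate $|\dot u^*|$. The component of the equation along $T_k$ gives $\frac{d}{dt}|\dot u^*|=\langle \ddot u^*,T\rangle$. This is precisely why the tangential projection is subtracted: it yields $dT/ds=F_0(T,T)-\langle F_0(T,T),T\rangle T$ in the weak sense. Concretely, we would test with $\varphi/|\dot u_k^*|$, or equivalently write the equation for $T_k$ and pass to the limit using the uniform convergence of $F_{0,k}$ and the $C^1_{loc}$ convergence from Step 2, which gives the stated identity.
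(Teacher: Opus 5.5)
Your Steps 1 and 3 follow the paper's route: arc-length reparametrization, the per-block curvature bound from \eqref{eqn:dec.Delta}, and the chain-rule formula for $d^2u_k^*/ds^2$, whose tangential projection produces the limit equation. Step 2, however, is misconceived, and as written it leaves the first assertion unproved. You propose to get equicontinuity of $T_k$ from the decomposition $\Delta u_k=F_{0,k}+G_k$. That decomposition is assumed only in the ``Moreover'' part, so it is not available for the $C^1_{\mathrm{loc}}$ claim. It would not help anyway: ``mass of $\ddot u_k^*$ on $Q_i$ of order $E(u_k,Q_i)$'' is exactly what \eqref{eqn:dec.Delta} already gives.

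The obstacle you describe is not there. Your Step 1 bound $\int_{Q_i}|dT_k/ds|\,ds\le C'\ell(\Gamma_k,Q_i)$ holds with the same $C'$ for every block of the central region, and it is linear in arc length. Summing over the consecutive blocks that meet an arc-length interval $I$ gives
\[
\int_I\Big|\frac{dT_k}{ds}\Big|\,ds\le C'\Big(|I|+2\max_i\ell(\Gamma_k,Q_i)\Big),\qquad \max_i\ell(\Gamma_k,Q_i)\le C\sqrt{L\,\sup_iE(u_k,Q_i)}\to0 .
\]
Hence $|T_k(s)-T_k(s')|\le C'|s-s'|+o(1)$ uniformly on compact subsets of $(-a,b)$. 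For large $k$ these subsets lie in the central region, because the end regions and the first and last $r$ central blocks have length tending to $0$. Since $|T_k|=1$, the asymptotic form of Arzel\`a--Ascoli gives $T_k\to T$ locally uniformly along a subsequence, with $T$ Lipschitz. This is the $C^1_{\mathrm{loc}}$ convergence, and it is exactly how the paper concludes from $\int_0^{s_0}|d^2u_k^*/ds^2|\,ds\le Cs_0$.

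In particular, your claim that \eqref{eqn:dec.Delta} ``only rules out concentration on single cells'' is wrong. Single-cell control would not suffice, since mass could pile up over many consecutive cells of vanishing total length. What makes the argument work is that the variation on any union of cells is at most $C'$ times its length, so every weak limit of $|dT_k/ds|\,ds$ is dominated by $C'\,ds$.

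There are also two smaller points. In Step 1, \eqref{almost.harmonic.1} alone only gives $|\dot u_k^*(t)-\dot u_k^*(t_i)|\le C\sqrt{E(u_k,Q_i)}$, which is not small compared with $|\dot u_k^*(t_i)|$. Before dividing by $|\dot u_k^*|$ you need it nonvanishing and comparable to $|a_{k,0}'(t_i)|$ on the whole block. For that, use $\ddot u_k^*=\frac1{2\pi}\int_{S^1}\Delta u_k\,d\theta$ together with the smallness in \eqref{almost.harmonic.2}, as in Lemma~\ref{lem:est.a0} via \eqref{diff.Am}. In Step 3, besides replacing $\nabla u_k$ by $(\dot u_k^*,0)$, you must also replace $F_{0,k}(u_k(t,\theta))$ by $F_{0,k}(u_k^*(t))$. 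This uses $\operatorname{osc}_{S^1}u_k(t,\cdot)\to0$ and the uniform convergence of $F_{0,k}$ to the continuous $F_0$. With these additions your Step 3 coincides with the paper's computation.
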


The preceding results provide a general three-circle framework for neck analysis. In Section \ref{sec:examples}, we then apply this framework to several geometric equations by verifying the almost-harmonic estimates in Definition \ref{defi of almost hm} with respect to the appropriate control energy. The resulting three-circle inequalities provide the key decay estimates on neck regions and, when combined with suitable Pohozaev-type identities or conservation laws, lead to the corresponding energy identities. This approach applies to solutions of Rivi\`ere's equation, conformal immersions with small Willmore energy, harmonic maps on long cylinders without the assumption of weak conformality, \(\alpha\)-harmonic maps, Willmore surfaces.

The three-circle theorem should be viewed more as a method than as a
result for one particular equation, which can be adapted to many different geometric problems.
To illustrate this point, we give two extensions of the main results. To illustrate this point, we consider two extensions of the main results. 

The first is to equations with an additional inhomogeneous perturbation term. A basic example is
$\tau(u)=f$.
Such equations arise naturally in the study of harmonic map type flows, where the time derivative appears as an inhomogeneous term in the elliptic equation at each fixed time. They also appear in the study of Gauss maps of immersed surfaces in Euclidean space, where the equation for the Gauss map contains a term related to the covariant derivative of the mean curvature vector. 
When $u$ is a map from a closed surface into a closed manifold, such a problem has been studied by Qing and Ding \cite{QingTian1997}, Ding and Tian \cite{DingTian1995}. Then Wang, Wei and Zhang \cite{WangWeiZhang2017} proved the energy identity and the no-neck property under a uniform $L^p$ bound on the tension fields for any $p>1$. In their setting, a weak Pohozaev estimate is still available. 
In the flow of Ding, Li and Liu \cite{DingLiLiu2006},  as the flow time tends to infinity, long cylindrical regions appear, and the image of the neck may converge to a geodesic. In their setting, there is no
Pohozaev type inequality.

We will study such a problem in long cylinders without Pohozaev type inequality. We require the perturbation term itself to satisfy a suitable three-circle type decay estimate. We then separate the cylinder into regions where the perturbation is dominant and regions where it is small compared with the energy. We will show that if a nontrivial neck remains, then after a suitable reparametrization its image converges to a geodesic.

The second extension concerns perturbations of the energy itself.
Instead of the Dirichlet energy $E$, we consider a family of energies
of the form
\[
E_k=E+R_k,
\]
where the remainder $R_k$ is small compared with the main energy in
the neck region. Under suitable local estimates, the normalized
contribution of $R_k$ disappears in the contradiction argument for
the three-circle theorem. Therefore, after a small modification of
the definitions of the energy quantities, the main three-circle
estimates and the long-cylinder decomposition remain valid.

A typical example is Lamm's $\epsilon$-harmonic approximation
\[
E_{\epsilon}(u)
=
\int_\Sigma
\left(
|\nabla u|^2+\epsilon|\Delta u|^2
\right)\,dV_g.
\]
After passing to cylindrical coordinates on a neck, the second term
gives a small perturbation of the Dirichlet energy, due to the relation
between the bubbling scale and $\epsilon$. Hence the general results
for the perturbed energies can be applied. We will also show that some
of the conclusions obtained above for $\alpha$-harmonic maps continue
to hold in this setting. In particular, the image of a nontrivial neck,
after a suitable reparametrization, converges to a geodesic. Under
additional symmetry assumptions on the target, the corresponding
conservation laws can further force this limiting geodesic to be
constant.

The remainder of the paper is organized as follows. Section \ref{sec:3-circle Lemma for almost hm} establishes the three-circle inequalities and uses them to prove Theorem \ref{main1} and Corollary \ref{intr.cor}. Section \ref{sec:geodesic analysis} studies the Hausdorff limits of the images on long cylinders and proves Theorem \ref{main2}. Section \ref{sec:examples} presents the geometric applications described above. Finally, Section \ref{sec:3-circle lemma to inho} extends the analysis to perturbed harmonic map equations with \(L^p\) forcing terms, \(p>1\),  and Section~\ref{sec:approx} develops the corresponding theory for approximating energies, with an application to Lamm's biharmonic approximation of harmonic maps.

\section*{Acknowledgements}

The main mathematical part of this work had been completed approximately
four months before the preparation of the present version. AI tools are not used in the development of mathematical
ideas, results, or proofs. They are used only for language editing and
 to check typographical errors. The authors take full responsibility
for the content of the paper.

\section{3-circle Lemma for almost harmonic functions}\label{sec:3-circle Lemma for almost hm}
In this section, we prove the 3-circle lemma. First, we introduce an abstract 
version of this principle, which will be adapted to the quantities considered later.

\begin{defi}\label{def:3circle}
For each $i=1,2,3$, let $\Phi_i\geq 0$ be given. We say that $\Phi_i$ satisfies the 3-circle lemma for $(q,L)$ if 
\begin{equation}\label{eqn:qphi}
\Phi_2\leq e^{-qL} (\Phi_{1}+\Phi_{3}).
\end{equation}
\end{defi}

We will study the 3-circle properties for energies on cylinders for functions that are nearly harmonic. 
Denote $Q_{a,b}=S^1\times [a,b]$. For simplicity, set
$$
Q=S^1\times[0,3L],\quad Q_i=S^1\times[(i-1)L,iL].
$$
Given a map $u\colon Q_{a,b}\rightarrow\R^n$ and $m\in\mathbb{Z}$,  define
$$
E(u,Q_{a,b})=\int_{Q_{a,b}} e^{-2mt}|\nabla u|^2\,dtd\theta.
$$

\subsection{3-circle lemma for harmonic functions}
First, we show that
\begin{lem}
Let \(f\) be harmonic on \(Q\), with expansion 
\begin{align*}
f(t,\theta)
= c+a_0 t + \sum_{k\in\mathbb{Z}\setminus\{0\}} e^{kt}(a_k \cos k\theta
+ b_k\sin k\theta\Big).
\end{align*}
Assume that
\begin{equation*}\label{3-cir.har.ass}
a_m=b_m=0
\end{equation*}
with the convention that $b_0=0$. Then, for any $q\in(0,2]$, there exists $L_0(m,q)>0$ such that if $L>L_0(m,q)$, the sequence $\{E(f,Q_i)\}$ satisfies the 3-circle lemma for $(q,L)$.
\end{lem}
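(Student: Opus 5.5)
The plan is to diagonalize the weighted energy in the Fourier modes and check the inequality mode by mode. Because $f$ is harmonic on $Q$, it is smooth up to the interior, and its expansion converges together with its derivatives in $L^2$ on each circle $\{t\}\times S^1$ for $t$ in the relevant range. So we may compute $\int_{S^1}|\nabla f|^2\,d\theta$ by Parseval and integrate in $t$ term by term, using monotone convergence since every term will be nonnegative.

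\emph{Step 1: the energy density in each mode.} Fix $k>0$ and group the two modes $\pm k$ sharing $\cos k\theta$. Write
$$g=(a_ke^{kt}+a_{-k}e^{-kt})\cos k\theta .$$
Then
$$g_t=k(a_ke^{kt}-a_{-k}e^{-kt})\cos k\theta,\qquad g_\theta=-k(a_ke^{kt}+a_{-k}e^{-kt})\sin k\theta .$$
Integrating over $S^1$, the mixed terms $\pm 2a_ka_{-k}$ cancel, and we get
$$\int_{S^1}|\nabla g|^2\,d\theta=2\pi k^2\bigl(a_k^2e^{2kt}+a_{-k}^2e^{-2kt}\bigr).$$
The same holds for the $\sin k\theta$ part. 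Different frequencies are $L^2(S^1)$-orthogonal for both $f_t$ and $f_\theta$. The $k=0$ part $c+a_0t$ contributes $2\pi a_0^2$. Hence
$$E(f,Q_i)=\sum_{p\in\mathbb{Z}} C_p\int_{(i-1)L}^{iL}e^{2(p-m)t}\,dt,$$
where $C_p=2\pi p^2(a_p^2+b_p^2)\ge 0$ for $p\neq 0$ and $C_0=2\pi a_0^2$. The hypothesis $a_m=b_m=0$ says exactly that $C_m=0$, so only exponents $\mu_p=2(p-m)$ with $|\mu_p|\ge 2$ occur.

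\emph{Step 2: the three-circle inequality for a single exponential.} Fix $p\neq m$ and set $I_i=\int_{(i-1)L}^{iL}e^{\mu_p t}\,dt$. The substitution $t\mapsto t+L$ gives $I_{i+1}=e^{\mu_pL}I_i$.
If $\mu_p\ge 2$, then $I_2=e^{-\mu_pL}I_3\le e^{-2L}I_3$.
If $\mu_p\le -2$, then $I_2=e^{\mu_pL}I_1\le e^{-2L}I_1$.
In either case $I_2\le e^{-2L}(I_1+I_3)\le e^{-qL}(I_1+I_3)$ for every $q\in(0,2]$ and every $L>0$.

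\emph{Step 3: summing.} Multiplying by $C_p\ge0$ and summing over $p$ gives
$$E(f,Q_2)\le e^{-qL}\bigl(E(f,Q_1)+E(f,Q_3)\bigr),$$
so any $L_0(m,q)>0$ works; the bound is in fact uniform in $m$. The only point needing care, which I expect to be the main (mild) obstacle, is Step 1. There one must verify the cancellation of the cross terms between the modes $k$ and $-k$, which share the same angular function, and justify the term-by-term integration. The cancellation is the explicit computation above. The integration is justified by Parseval on each circle plus monotone convergence, since all summands are nonnegative.
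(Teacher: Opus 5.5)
Your proposal is correct and follows essentially the same route as the paper. Both arguments compute $\int_{S^1}|\nabla f|^2\,d\theta$ mode by mode via Parseval, with the $\pm k$ cross terms cancelling, and then handle each exponential $e^{2(p-m)t}$ by shifting $t\mapsto t\pm L$ according to the sign of $p-m$, which gives the bound $e^{-2L}\le e^{-qL}$. Your extra remarks, that Tonelli justifies the term-by-term integration and that the conclusion holds for every $L>0$ uniformly in $m$, are consistent with the paper's proof and make it slightly sharper.
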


\begin{proof}
A direct computation yields
\begin{eqnarray*}
f_t
&=&a_0 + \sum_{k\neq 0} \Big(
ka_k e^{kt}\cos k\theta
+ kb_k e^{kt}\sin k\theta
\Big)\\
&=&a_0 + \sum_{k=1 }^\infty \Big(
k(a_k e^{kt}-a_{-k} e^{-kt})\cos k\theta
+ k(b_k e^{kt}+b_{-k} e^{-kt})\sin k\theta
\Big),\\
f_\theta
&=& \sum_{k\neq 0} \Big(
- ka_k e^{kt}\sin k\theta
+ kb_k e^{kt}\cos k\theta
\Big)\\
&=& \sum_{k=1}^\infty \Big(
- k(a_k e^{kt}+a_{-k} e^{-kt})\sin k\theta
+ k(b_k e^{kt}-b_{-k} e^{-kt})\cos k\theta
\Big).
\end{eqnarray*}
Then,
we have
\begin{align*}
\int_0^{2\pi} |f_t|^2\,d\theta
&= 2\pi a_0^2
+ \pi \sum_{k=1}^\infty k^2 \Big[
(a_k e^{kt}-a_{-k} e^{-kt})^2
+ (b_k e^{kt}+b_{-k} e^{-kt})^2
\Big],
\end{align*}
\begin{align*}
\int_0^{2\pi} |f_\theta|^2\,d\theta
&= \pi \sum_{k=1}^\infty k^2 \Big[
(a_k e^{kt}+a_{-k} e^{-kt})^2
+ (b_k e^{kt}-b_{-k} e^{-kt})^2
\Big],
\end{align*}
which yields that
\begin{align*}
\int_0^{2\pi} (|f_t|^2 + |f_\theta|^2)\, d\theta
&= 2\pi a_0^2 
+ 2\pi \sum_{k=1}^\infty k^2 \Big(
a_k^2 e^{2kt} + a_{-k}^2 e^{-2kt}
+ b_k^2 e^{2kt} + b_{-k}^2 e^{-2kt}
\Big)\\
&= 2\pi a_0^2
+ 2\pi \sum_{k\neq 0} k^2 \Big
(a_k^2 + b_k^2\Big)e^{2kt}.
\end{align*}
Thus, for $i=1,2,3$, the energy integral can be decomposed as
\begin{equation*}
    \begin{split}
     E(f,Q_i)&=2\pi\int_{(i-1)L}^{iL}\Big( a_0^2e^{-2mt}+\sum_{k\neq 0} k^2(a_k^2+b_k^2)e^{2(k-m)t}\Big)\,dt\\       &:=2\pi\int_{(i-1)L}^{iL}\sum_{k=-\infty}^\infty A_k^2e^{2kt}\,dt.\\
     &:=2\pi\sum_{k=-\infty}^\infty I_i^k.
    \end{split}
\end{equation*}
The assumption $a_m=b_m=0$ implies that $A_0=0$. Consequently, it suffices to analyze the sum by considering two cases.

\textbf{Case 1.} $k<0$.
\begin{equation*}     
I_2^k=\int_{L}^{2L} A_k^2e^{2kt}\,dt=\int_0^LA_k^2e^{2k(\tau+L)}\,d\tau=e^{2kL}I^k_1\leq e^{-2L}I_1^k.
\end{equation*}

\textbf{Case 2.} $k>0$.
\begin{equation*}
I_2^k=\int_{L}^{2L} A_k^2e^{2kt}\,dt=\int_{2L}^{3L}A_k^2e^{2k(\tau-L)}\,d\tau=e^{-2kL}I^k_3\leq e^{-2L}I_3^k.
\end{equation*}
This completes the proof.
\end{proof} 

\subsection{3-circle lemma for almost harmonic functions}

Let $u\in W^{2,p}(Q,\R^n)$.
We write $u$ as
\[
u = a_0(t) + \sum_{k=1}^{\infty} \big( a_k(t)\cos k\theta + b_k(t)\sin k\theta \big).
\]
A direct computation shows that
\begin{align*}
E(u,Q_{a,b})
&= \frac{\pi}{2} \int_a^b \Bigg\{
4e^{-2mt}|a_0'|^2 \\
&\qquad + \sum_{k=1}^\infty e^{-2mt} \Big[ |a_k' - k a_k|^2 + |a_k' + k a_k|^2 \Big] \\
&\qquad + \sum_{k=1}^\infty e^{-2mt} \Big[ |b_k' - k b_k|^2 + |b_k' + k b_k|^2 \Big]
\Bigg\}\, dt.
\end{align*}

Define
\[
\hat b_0 = 0, \quad \hat a_0 = 2a_0', \quad 
\hat a_p = a_{|p|}' + p a_{|p|}, \quad 
\hat b_p = b_{|p|}' + p b_{|p|}, \quad p \in \mathbb{Z}\setminus\{0\}.
\]
Then
\[
E(u,Q_{a,b})
= \frac{\pi}{2} \sum_{p=-\infty}^{\infty} \int_a^b e^{-2mt} \big( |\hat a_p|^2 + |\hat b_p|^2 \big)\, dt.
\]

Moreover,
\[
\Delta u
= a_0''(t)
+ \sum_{k=1}^\infty \big(a_k''(t) - k^2 a_k(t)\big)\cos k\theta
+ \sum_{k=1}^\infty \big(b_k''(t) - k^2 b_k(t)\big)\sin k\theta,
\]
which implies for any $m>0$,
\[
\hat a_m' - m \hat a_m
= a_m'' - m^2 a_m
= \frac{1}{\pi} \int_0^{2\pi} \Delta u \cos m\theta\, d\theta,
\]
and
\[
\hat b_m' - m \hat b_m
= b_m'' - m^2 b_m
= \frac{1}{\pi} \int_0^{2\pi} \Delta u \sin m\theta\, d\theta.
\]
Thus,
\begin{equation*}
\begin{aligned}
(e^{-mt}\hat a_m)' &= \frac{1}{\pi} \int_0^{2\pi} e^{-mt}\Delta u \cos m\theta\, d\theta,\\
(e^{-mt}\hat b_m)' &= \frac{1}{\pi} \int_0^{2\pi} e^{-mt}\Delta u \sin m\theta\, d\theta.
\end{aligned}
\end{equation*}
Consequently,
\begin{equation}\label{diff.Am}
\begin{aligned}
&\Big|e^{-mt}\hat a_m(t) - e^{-ms}\hat a_m(s)\Big|
+ \Big|e^{-mt}\hat b_m(t) - e^{-ms}\hat b_m(s)\Big| \\
&\qquad \le C \int_{Q_{t,s}} e^{-m\tau} |\Delta u|.
\end{aligned}
\end{equation}
The same formulas hold for \(m=0\). For \(m<0\), use \(\cos|m|\theta\) and \(\sin|m|\theta\) in the coefficient identities. The estimate \eqref{diff.Am} is valid for all \(m\in\mathbb Z\), with the interval between \(s\) and \(t\) understood in increasing order.

We define
\[
E^*(u,Q_{a,b})
= \frac{\pi}{2} \int_a^b e^{-2mt} \big(|\hat a_m|^2 + |\hat b_m|^2\big)\, dt,
\]
and set
\[
t_i = \Big(i - \tfrac{1}{2}\Big)L.
\]

\begin{lem}\label{est.ahat}
Let $u$ be a $(\tau,\epsilon)$-almost harmonic function, and suppose $L>1$. Then there exists $C>0$, depending only on $\tau(1/2)$, such that
\[
|e^{-mt_i}\hat a_m(t_i)| + |e^{-mt_i}\hat b_m(t_i)|
\le C \sqrt{E(u,Q_i)}, \quad i=1,2,3,
\]
and
\[
\begin{aligned}
&|e^{-mt_i}\hat a_m(t_i) - e^{-mt_{i+1}}\hat a_m(t_{i+1})| + |e^{-mt_i}\hat b_m(t_i) - e^{-mt_{i+1}}\hat b_m(t_{i+1})|\\
&\quad 
\le C\epsilon \sqrt{E(u,Q_i \cup Q_{i+1})}, \quad i=1,2.
\end{aligned}
\]
\end{lem}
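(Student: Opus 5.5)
The plan is to prove the first estimate by averaging $e^{-mt}\hat a_m$ over a unit-length interval inside $Q_i$, and then to control the deviation from that average with \eqref{diff.Am} together with the almost-harmonic bound \eqref{almost.harmonic.1}. The second estimate will then follow directly from \eqref{diff.Am} and \eqref{almost.harmonic.2}.

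\textbf{First estimate.} Fix $i$ and let $I=[t_i-\tfrac12,t_i+\tfrac12]$, which lies inside $[(i-1)L,iL]$ because $L>1$. By the energy decomposition, the mode $p=m$ alone gives
\[
\int_I e^{-2mt}\big(|\hat a_m|^2+|\hat b_m|^2\big)\,dt\le \tfrac{2}{\pi}E(u,Q_i).
\]
Hence there is $s\in I$ with $e^{-ms}(|\hat a_m(s)|+|\hat b_m(s)|)\le C\sqrt{E(u,Q_i)}$. By \eqref{diff.Am}, it then suffices to bound $\int_{S^1\times I}e^{-m\tau}|\Delta u|$.

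Since $|t-t_i|\le \tfrac12$ on $I$, we have $e^{-m\tau}\le e^{|m|/2}e^{-mt_i}$ there. The interval $I$ lies in $[1/2,3L-1/2]$, so \eqref{almost.harmonic.1} with $r=1/2$ and H\"older on a set of unit area give
\[
\int_{S^1\times I}|\Delta u|\le C\tau(1/2)\,\|\nabla u\|_{L^2(Q)}.
\]

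\textbf{Main obstacle.} The difficulty is that this bound is in terms of the unweighted global norm $\|\nabla u\|_{L^2(Q)}$, not the weighted local $E(u,Q_i)$. Two points have to be reconciled. First, the weight: $\int e^{-2mt}|\nabla u|^2$ compares with $e^{-2mt_i}\int|\nabla u|^2$ only up to factors $e^{\pm 2|m|L}$ across $Q$, which is harmless locally but not globally. Second, the localization: one must pass from all of $Q$ to $Q_i$.

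My approach is to read Definition \ref{defi of almost hm} as scale- and translation-invariant, since it is applied to subcylinders in the long-cylinder setting anyway. That is, I would apply it on a cylinder built around $Q_i$, with the weight absorbed by the normalization $e^{-mt_i}$. This is the convention under which a constant depending only on $\tau(1/2)$ (and $m$) is plausible.

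Concretely, the estimate I aim for is
\[
\int_{S^1\times I}e^{-m\tau}|\Delta u|\le C(\tau(1/2),m)\,e^{-mt_i}\Big(\int_{Q_i}|\nabla u|^2\Big)^{1/2}\le C\sqrt{E(u,Q_i)}.
\]
The last inequality uses $e^{-2mt}\ge e^{-|m|}e^{-2mt_i}$ on $I$, and should be arranged by localizing the energy to a unit neighbourhood of $t_i$. If the definition is meant literally, with $\|\nabla u\|_{L^2(Q)}$, I would instead state the bound with $E(u,Q)$ and note that the later contradiction arguments normalize by $E(u,Q)$ anyway. Combining the bound at $s$ with the deviation estimate gives the first claim.

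\textbf{Second estimate.} Apply \eqref{diff.Am} with $t=t_i$ and $s=t_{i+1}$. The integral runs over $S^1\times[t_i,t_{i+1}]\subset Q_i\cup Q_{i+1}$, which lies in $[1/2,3L-1/2]$ as well. By \eqref{almost.harmonic.2} with $r=1/2$, the unweighted integral of $|\Delta u|$ there is at most $\tau(1/2)\epsilon\|\nabla u\|_{L^2}$. The weight $e^{-m\tau}$ is handled as in the first estimate, now on an interval of length $L$, and gives the bound $C\epsilon\sqrt{E(u,Q_i\cup Q_{i+1})}$.
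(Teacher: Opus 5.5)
Your proof is correct under the localized reading of Definition~\ref{defi of almost hm}, and your second estimate is the paper's argument (\eqref{diff.Am} on $[t_i,t_{i+1}]$, then \eqref{almost.harmonic.2}). The first estimate follows a different route.

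\textbf{How the two first estimates differ.} The paper bounds $\hat a_m(t_i)$ and $\hat b_m(t_i)$ directly. It uses the coefficient formulas \eqref{trace0}--\eqref{trace1} and the trace embedding on the slice $\{t_i\}\times S^1$, with \eqref{almost.harmonic.1} controlling $\nabla u$ in $W^{1,p}$ near that slice by $\|\nabla u\|_{L^2(Q_{t_i-L/4,\,t_i+L/4})}$. You instead take a good slice $s$ from $E^*(u,Q_i)\le E(u,Q_i)$. You then transport it to $t_i$ with the first-order equation for $e^{-mt}\hat a_m$, so you only need an $L^1$ bound on $\Delta u$ near $t_i$.

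The trace argument ignores the mode structure and bounds every Fourier coefficient on every slice, but it genuinely needs the Hessian estimate. Yours exploits the special structure of the critical mode. As a result, \eqref{almost.harmonic.2} alone already gives the first bound, with constant $C(1+\epsilon)$. Both halves of the lemma then follow from the single identity \eqref{diff.Am}, with no trace theorem.

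\textbf{On localization.} You are right to flag it, and the paper's proof uses it silently: its intermediate bound involves $\int_{Q_{t_i-L/4,\,t_i+L/4}}|\nabla u|^2$, which the literal definition does not provide. With the literal right-hand side $\|\nabla u\|_{L^2(Q)}$, the first inequality actually appears to fail. Take $m=0$ and put in $a_0'$ a bump at $t_1$ of height $h$ and width $w\sim h^{p/(p-1)}$. Add $c\,e^{k(t-3L)}\cos k\theta$ with $k\to\infty$, normalized to unit energy. This satisfies the hypotheses with fixed $\tau,\epsilon$, while $|\hat a_0(t_1)|/\sqrt{E(u,Q_1)}\sim w^{-1/2}\to\infty$.

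So commit to the localized reading. Your fallback with $E(u,Q)$ would not serve the later applications anyway. Lemma~\ref{lem:ratio of lambda}, the proof of Theorem~\ref{thm:long.cylinder}, and the bound $\lambda_i\le C$ in the proof of Theorem~\ref{main1} all need the local right-hand sides $E(u,Q_i)$ and $E(u,Q_i\cup Q_{i+1})$.

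\textbf{A minor slip.} For $i=1,3$ the unit interval $I$ lies in $[1/2,3L-1/2]$ only when $L\ge 2$. For $1<L<2$, shrink $I$ or use a smaller $r$. The constants, in your argument as in the paper's, also depend on $m$ and $L$, not only on $\tau(1/2)$.
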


\begin{proof}
The Fourier coefficients satisfy
\begin{equation}\label{trace0}
a_0'(t)
= \frac{1}{2\pi}\int_0^{2\pi} u_t(t,\theta)\, d\theta,
\end{equation}
and for any $j>0$,
\begin{equation}\label{trace1}
a_j'(t)
= \frac{1}{\pi}\int_0^{2\pi} u_t(t,\theta)\cos(k\theta)\, d\theta,
\qquad
a_j(t)
= -\frac{1}{k\pi}\int_0^{2\pi} u_\theta(t,\theta)\sin(k\theta)\, d\theta.
\end{equation}
By the trace embedding inequality and \eqref{almost.harmonic.1}, we have
\begin{align*}
|e^{-mt_i}\hat a_m(t_i)|
&\le C e^{-mt_i}
\left(
\int_{Q_{t_i-\frac{1}{4}L,\; t_i+\frac{1}{4}L}}
|\nabla u|^2
\right)^{1/2} \\
&\le C \sqrt{E(u, Q_{t_i-\frac{1}{4}L,\; t_i+\frac{1}{4}L})}
\le C \sqrt{E(u, Q_i)}.
\end{align*}

Moreover, by \eqref{diff.Am} and \eqref{almost.harmonic.2},
\begin{align*}
\left|e^{-mt_{i+1}}\hat a_m(t_{i+1}) - e^{-mt_i}\hat a_m(t_i)\right|
&\le \frac{e^{-mt_i}}{\pi}
\int_{S^1 \times [t_i,t_{i+1}]} |\Delta u|\, d\theta dt \\
&\le C\epsilon \sqrt{E(u,Q_i\cup Q_{i+1})} .
\end{align*}
The estimates for $\hat b_m$ are analogous.
\end{proof}

We define
$$
\mathcal{A}_i(u)=e^{-mt_i}\sqrt{|\hat a_m|^2(t_i)+|\hat b_m|^2(t_i)},
$$
and
$$
\lambda_i(u)=\frac{\mathcal A_{i}(u)}{\sqrt{E(u,Q_i)}}.
$$


Note that $E(u,Q_i)=0$ implies that $E^*(u,Q_i)=0$ (since it corresponds to the term $p=m$ in the sum of non-negative terms), which in turn yields $\mathcal{A}_i(u)=0$. In this case, the definition of $\lambda_i(u)$ becomes the indeterminate form $\frac{0}{0}$. Thus, we adopt the following convention:
\begin{equation}\label{eq:convention for A_m(u)}
    \lambda_i(u)=0,\qquad \text{when}~\,E(u,Q_i)=0.
\end{equation}

\begin{exa}
When $u$ is harmonic map, we may assume
$$
u=c+c_0t+\sum_{p\in\mathbb{Z}\backslash \{0\}}e^{pt} (c_p \cos p\theta+ c_p' \sin p\theta).
$$
Then for any $k>0$, we have 
$$
a_k(t)=c_ke^{kt}+c_{-k}e^{-kt},
$$
and for any $k\in\mathbb Z$, 
$$
\hat a_k=2kc_ke^{kt}.
$$
We define
$$
\mathcal F(u)=c_me^{mt}\cos m\theta+c_m'e^{mt}\sin m\theta,\qquad m\neq 0,
$$
and
$$\mathcal F(u)=c_0t,\qquad m=0.$$
It is easy to check that
\begin{equation}\label{E.E*.har}
E^*(u,Q_i)=E(\mathcal{F}(u),Q_i),\quad (E-E^*)(u,Q_i)=E(u-\mathcal{F}(u),Q_i).
\end{equation}
\end{exa}

\begin{thm}\label{thm:3 circle thm for u}
Let $q\in(0,2]$, $L>\max\{1,L_0(m,q)\}$, and $\tau$ be any positive function. Then there exist positive constants $\epsilon_0$ and $\beta_0$, depending on $q, L$ and $\tau$, such that for any $\epsilon\in(0,\epsilon_0]$ and any $(\tau,\epsilon)$-almost harmonic function $u$ on $Q$ satisfying $$
\min\{\lambda_1,\lambda_2,\lambda_3\}<\beta_0,
$$
the sequence $\{E(u,Q_i)\}$ satisfies the 3-circle lemma for $(q,L)$.
\end{thm}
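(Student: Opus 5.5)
We argue by contradiction and compactness. Suppose the statement fails for some fixed $q\in(0,2]$ and $L>\max\{1,L_0(m,q)\}$. Then there are $\epsilon_k\to0$, $\beta_k\to0$, and $(\tau,\epsilon_k)$-almost harmonic functions $u_k$ on $Q$ with $\min_i\lambda_i(u_k)<\beta_k$ but
\[
E(u_k,Q_2)>e^{-qL}\big(E(u_k,Q_1)+E(u_k,Q_3)\big).
\]
In particular $E(u_k,Q_2)>0$, so $\|\nabla u_k\|_{L^2(Q)}>0$.

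We normalize by setting
\[
v_k=\frac{u_k-c_k}{\|\nabla u_k\|_{L^2(Q)}},
\]
where $c_k$ is the mean of $u_k$ over $Q$. Then $\|\nabla v_k\|_{L^2(Q)}=1$, and $v_k$ is again $(\tau,\epsilon_k)$-almost harmonic, because both conditions in Definition~\ref{defi of almost hm} are homogeneous. The weight $e^{-2mt}$ is bounded above and below on $Q$ by constants depending on $m,L$, so the weighted energies $E(v_k,Q_i)$ are comparable to the unweighted ones. Moreover, the inequality and the ratios $\lambda_i$ are scale invariant.

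By \eqref{almost.harmonic.1} and Poincaré, $v_k$ is bounded in $W^{2,p}(S^1\times[r,3L-r])$ for every $r>0$. A subsequence therefore converges weakly in $W^{2,p}_{loc}$ and strongly in $W^{1,2}_{loc}$, and also in $C^1_{loc}$ after a trace or embedding argument, to some $f$. By \eqref{almost.harmonic.2}, $\|\Delta v_k\|_{L^1}\le\tau(r)\epsilon_k\to0$ on interior regions, so $f$ is harmonic on the open cylinder. Since $\hat a_m(t_i),\hat b_m(t_i)$ are given by the integrals \eqref{trace0}--\eqref{trace1} of $\nabla v_k$ on the interior circle $t=t_i$, they converge to the corresponding coefficients of $f$.

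The main obstacle is the loss of energy near the boundary circles $t=0$ and $t=3L$. We need strong convergence of $E(v_k,Q_i)$ for $i=1,3$, but the $W^{2,p}$ control degenerates as $r\to0$. We have only weak lower semicontinuity of $\nabla v_k$ in $L^2(Q)$ near the ends, and this is enough for the argument. On $Q_2$, which stays away from the ends, $E(v_k,Q_2)\to E(f,Q_2)$ strongly. On $Q_1,Q_3$ we get $E(f,Q_i)\le\liminf_k E(v_k,Q_i)$. Passing to the limit in the contradiction inequality then gives
\[
E(f,Q_2)\ge e^{-qL}\limsup_k\big(E(v_k,Q_1)+E(v_k,Q_3)\big)\ge e^{-qL}\big(E(f,Q_1)+E(f,Q_3)\big).
\]

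Next we show $E(f,Q_2)>0$. The set $Q_1\cup Q_2\cup Q_3=Q$ carries total unweighted energy $1$ for $v_k$, so some $E(v_k,Q_i)$ is bounded below by a constant $c(m,L)>0$. The contradiction inequality gives $E(v_k,Q_2)\ge e^{-qL}\max\{E(v_k,Q_1),E(v_k,Q_3)\}$, hence $E(v_k,Q_2)\ge c'>0$. By the strong convergence on $Q_2$, $E(f,Q_2)\ge c'$.

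It remains to show that the critical mode of $f$ vanishes. For some index $i$, along a subsequence, $\lambda_i(v_k)<\beta_k\to0$. Since $E(v_k,Q_i)\le C$, this gives $\mathcal A_i(v_k)\to0$. Lemma~\ref{est.ahat} gives
\[
|\mathcal A_j(v_k)-\mathcal A_{j+1}(v_k)|\le C\epsilon_k,
\]
so $\mathcal A_j(v_k)\to0$ for all $j$, in particular for $j=2$. Passing to the limit, the critical mode of $f$ satisfies $\mathcal A_2(f)=0$. For harmonic $f$, $e^{-mt}\hat a_m$ is constant, so the coefficients $a_m=b_m=0$ in the expansion of the lemma for harmonic functions. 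If $f$ were only harmonic in the interior, we would apply that lemma on slightly shrunk cylinders $Q_{\delta,3L-\delta}$ and let $\delta\to0$.

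That lemma then yields
\[
E(f,Q_2)\le e^{-2L}\big(E(f,Q_1)+E(f,Q_3)\big).
\]
Since $q\le2$, this is at most $e^{-qL}\big(E(f,Q_1)+E(f,Q_3)\big)$. If $q=2$, the boundary case needs strictness: modes with $|k-m|\ge2$ decay faster, and the modes $k=m\pm1$ give the borderline. I would handle this either by keeping the factor $e^{-2|k-m|L}$ and noting that equality forces specific structure, or by simply proving the result for $q<2$ and absorbing the case $q=2$. Combining with $E(f,Q_2)\ge e^{-qL}\big(E(f,Q_1)+E(f,Q_3)\big)$ forces equality, together with $E(f,Q_2)>0$. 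This contradicts the strict decay from the harmonic lemma (for $q<2$ the contradiction is immediate), and completes the argument.
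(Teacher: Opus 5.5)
Your proof follows the paper's route. You argue by contradiction, normalize, and use the interior $W^{2,p}$ bounds to extract a harmonic limit $f$ whose critical mode vanishes; strong convergence on $Q_2$ and lower semicontinuity on $Q_1,Q_3$ then give an inequality that contradicts the harmonic three-circle lemma.

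Two of your choices differ from the paper but are harmless. Normalizing by $\|\nabla u_k\|_{L^2(Q)}$ rather than $\sqrt{E(u_k,Q_2)}$ only costs you the extra step $E(f,Q_2)>0$. Transferring $\mathcal A_i\to0$ to $\mathcal A_2$ via Lemma~\ref{est.ahat} is unnecessary, since $t_1,t_3$ are interior. One correction: $C^1_{\mathrm{loc}}$ convergence needs $p>2$. For $1<p\le2$, use the compact trace embedding for $\nabla v_k$ on the circles $\{t_i\}\times S^1$, as the paper does.

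The one loose end is $q=2$, which you flag but do not close. Proving the result for $q<2$ and ``absorbing'' $q=2$ does not work, because $\epsilon_0,\beta_0$ may degenerate as $q\to2$. Your other idea does work and takes one line. Use the notation of the harmonic lemma, with shifted index $k\neq0$ since the critical mode vanishes. Then
\[
\sum_{k\neq0}I_2^k\le e^{-2L}\Big(\sum_{k<0}I_1^k+\sum_{k>0}I_3^k\Big)
=e^{-2L}\sum_{k\neq0}\bigl(I_1^k+I_3^k\bigr)-e^{-2L}\Big(\sum_{k>0}I_1^k+\sum_{k<0}I_3^k\Big).
\]
Any mode with $A_k\neq0$ has both $I_1^k>0$ and $I_3^k>0$. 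Hence $E(f,Q_2)<e^{-2L}\bigl(E(f,Q_1)+E(f,Q_3)\bigr)$ strictly whenever $E(f,Q_2)>0$. This holds even for your borderline modes $k=m\pm1$ (shifted index $\pm1$). It contradicts the non-strict limit inequality $E(f,Q_2)\ge e^{-2L}\bigl(E(f,Q_1)+E(f,Q_3)\bigr)$. The paper's proof calls the contradiction ``direct'' and also passes over this endpoint.
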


\begin{proof}
We argue by contradiction. Suppose the conclusion fails. Then there exist a sequence  $\epsilon_k\to 0$, and a sequence $u_k$ satisfying \eqref{almost.harmonic.1} and \eqref{almost.harmonic.2} and some $i_0\in\{1,2,3\}$ with
$$ \lambda_{i_0}(u_k)\to 0$$
and
$$
0\neq E(u_k,Q_2)>e^{-qL}\left( E(u_k,Q_1) +E(u_k,Q_3)\right).
$$
Set
$$
v_k=\frac{u_k-\bar u_k}{\sqrt{E(u_k,Q_2)}},\s \bar u_k=\vint_{Q_2}u_k.
$$
Then
\begin{equation*}
E(v_k,Q_2)=1,\s 
E(v_k,Q_2)>e^{-qL}\left( E(v_k,Q_1) +E(v_k,Q_3)\right).
\end{equation*}
In particular,
$$
\int_Q|\nabla v_k|^2\,dtd\theta \leq CE(v_k,Q)<C'.
$$
By virtue of \eqref{almost.harmonic.1}, for any fixed $\delta>0$, 
$$
\|\nabla v_k\|_{W^{1,p}(Q_{\delta,3L-\delta})}\leq C(\delta)\|\nabla v_k\|_{L^{2}(Q)}\leq C''.
$$
Moreover, an application of \eqref{almost.harmonic.2} leads to 
$$\|\Delta v_k\|_{L^1(Q_{\delta,3L-\delta})}=\frac{\|\Delta u_k\|_{L^1(Q_{\delta,3L-\delta})}}{\sqrt{E(u_k,Q_2)}}\leq \tau\epsilon_k\|\nabla v_k\|_{L^2(Q)}\to 0\qquad \text{as}~~k\to\infty.$$	
By the Rellich-Kondrachov theorem, up to a subsequence, 
\[
v_k \rightharpoonup v \quad \text{in } W^{2,p}_{\mathrm{loc}}(Q),
\qquad
v_k \to v \quad \text{in } W^{1,2}_{\mathrm{loc}}(Q),
\]
where $v$ is harmonic and satisfies
\[
E(v,Q_2) = 1.
\]
In particular, $v$ is non-constant.
		
Since $\lambda_{i_0}(u_k)\to 0$, we also have $\lambda_{i_0}(v_k)\to 0$. By compact trace embedding theorem and \eqref{trace0}, \eqref{trace1}, 
$$ 
\mathcal A^2_{i_0}(v)=\lim_{k\rightarrow+\infty}\mathcal A^2_{i_0}(v_k)
=\lim_{k\rightarrow+\infty}\lambda^2_{i_0}(v_k)E(v_k,Q_{i_0})= 0.
$$

On the other hand, by Fatou's lemma,
\begin{equation*}
\begin{split}
 e^{-qL}\left( E(v,Q_1) +E(v,Q_3)\right)
 &\leq \liminf_{k\to\infty}e^{-qL}\left( E(v_k,Q_1) +E(v_k,Q_3)\right)\\
 &\leq \lim_{k\to\infty}E(v_k,Q_2)=E(v,Q_2)
\end{split}
\end{equation*}
This yields a direct contradiction to Lemma \ref{3-cir.har.ass} for $v$, thereby completing the proof.
\end{proof}
	
We now turn to the case where $\lambda_i$ remains uniformly bounded below.	
We first show the following:	

\begin{lem}\label{lem:ratio of lambda}
With the same notation as in Theorem \ref{thm:3 circle thm for u}, assume that 
$$
\min\{\lambda_i,\lambda_{i+1}\}\geq \beta_0>0.
$$
Then there exist positive constants $\epsilon_0'$ and $C$ such that if $u$ is a $(\tau,\epsilon)$-almost harmonic function on $Q$
with $\epsilon \le \epsilon_0'$, one has
$$
\frac{1-\frac{C\epsilon_0'}{\beta_0}}{1+\frac{C\epsilon_0'}{\beta_0}}\leq\frac{ \mathcal A_{i+1}(u)}{ \mathcal A_{i}(u)}\leq \frac{1+\frac{C\epsilon_0'}{\beta_0}}{1-\frac{C\epsilon_0'}{\beta_0}}.$$
	\end{lem}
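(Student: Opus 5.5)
The plan is to combine the two estimates of Lemma \ref{est.ahat} with the lower bounds on $\lambda_i,\lambda_{i+1}$ and the triangle inequality. Write $V_j=e^{-mt_j}(\hat a_m(t_j),\hat b_m(t_j))\in\R^{2n}$, so that $\mathcal A_j(u)=|V_j|$. The second estimate of Lemma \ref{est.ahat} gives
\[
\big|\mathcal A_{i+1}(u)-\mathcal A_i(u)\big|\le |V_{i+1}-V_i|\le C\epsilon\sqrt{E(u,Q_i\cup Q_{i+1})}\le C\epsilon\Big(\sqrt{E(u,Q_i)}+\sqrt{E(u,Q_{i+1})}\Big).
\]
Here $C$ may absorb the factor $\sqrt2$ coming from comparing the Euclidean norm with the sum of the $\hat a_m$ and $\hat b_m$ parts.

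Next, I will convert the energies into the $\mathcal A$'s using the hypothesis $\lambda_j\ge\beta_0$ for $j=i,i+1$. Since $\lambda_j>0$, we have $E(u,Q_j)\neq0$, so the convention \eqref{eq:convention for A_m(u)} does not intervene. We then get $\sqrt{E(u,Q_j)}=\mathcal A_j/\lambda_j\le \mathcal A_j/\beta_0$, and hence
\[
\big|\mathcal A_{i+1}-\mathcal A_i\big|\le \frac{C\epsilon}{\beta_0}\big(\mathcal A_i+\mathcal A_{i+1}\big)\le \frac{C\epsilon_0'}{\beta_0}\big(\mathcal A_i+\mathcal A_{i+1}\big).
\]
Setting $\delta=C\epsilon_0'/\beta_0$ and rearranging the two one-sided inequalities gives $(1-\delta)\mathcal A_{i+1}\le(1+\delta)\mathcal A_i$ and $(1-\delta)\mathcal A_i\le(1+\delta)\mathcal A_{i+1}$. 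Dividing by $\mathcal A_i>0$ yields the claimed two-sided bound. This requires $\delta<1$, so I choose $\epsilon_0'$ small enough that $C\epsilon_0'<\beta_0$; the constant $C$ is the one from Lemma \ref{est.ahat}, which depends only on $\tau(1/2)$ for $L>1$.

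The only subtle point is bookkeeping rather than a real obstacle. I need to make sure the constant in Lemma \ref{est.ahat} is applied to $E(u,Q_i\cup Q_{i+1})$, and that splitting this via $\sqrt{x+y}\le\sqrt x+\sqrt y$ only changes constants. I also need positivity of $\mathcal A_i$ and $\mathcal A_{i+1}$, which follows directly from $\lambda\ge\beta_0>0$.
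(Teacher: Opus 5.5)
Your proposal is correct and follows the paper's proof essentially step for step: the reverse triangle inequality combined with the second estimate of Lemma~\ref{est.ahat}, the conversion $\sqrt{E(u,Q_j)}=\mathcal A_j/\lambda_j\le \mathcal A_j/\beta_0$, and division by $\mathcal A_i>0$. Your explicit remark that one needs $C\epsilon_0'<\beta_0$ makes precise a smallness condition the paper leaves implicit.
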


\begin{proof}
By the definition of $\lambda_i$, we have
$$
\sqrt{E(u,Q_i)}=\frac{\mathcal A_{i}(u)}{\lambda_i}.
$$
Hence, by Lemma \ref{est.ahat} and our assumption on $\lambda_i$ and $\lambda_{i+1}$,
\begin{equation}\label{Ai}
    \begin{split}
    &\quad\left|\mathcal A_{i+1}(u)- \mathcal A_{i}(u)\right|\\
&\leq\sqrt{\Big(e^{-mt_{i+1}}\hat a_m(t_{i+1})-e^{-mt_{i}}\hat a_m(t_i)\Big)^2+\left(e^{-mt_{i+1}}\hat b_m(t_{i+1})-e^{-mt_{i}}\hat b_m(t_i)\right)^2}\\
&\leq\left|e^{-mt_{i+1}}\hat a_m(t_{i+1})-e^{-mt_{i}}\hat a_m(t_i)\right|
+ \left|e^{-mt_{i+1}}\hat b_m(t_{i+1})-e^{-mt_{i}}\hat b_m(t_i)\right|\\
&\leq C\epsilon_0'\left(\sqrt{E(u,Q_{i})}+\sqrt{E(u,Q_{i+1})}\right)\\
&\leq C\epsilon_0'\left(\frac{ \mathcal A_{i+1}(u)}{\lambda_{i+1}}+\frac{\mathcal A_{i}(u)}{\lambda_{i}}\right)\\
&\leq C\frac{\epsilon_0'}{\beta_0}\left(\mathcal A_{i}(u)+\mathcal A_{i+1}(u)\right).
    \end{split}
\end{equation}
Since $\lambda_i,\lambda_{i+1}>0$, by our definition of $\lambda_i$ and the convention, $\mathcal A_{i}(u)>0$. The conclusion follows by by dividing both sides by $\mathcal A_{i}(u)$.
	\end{proof}

We now analyze the 3-circle property for the remainder term $E - E^*$.
Define
$$
\nu_i(u)=\frac{\sqrt{(E-E^*)(u,Q_i)}}{\mathcal A_{i}(u)},
$$
where we use the convention that $\nu_i=0$ if $(E-E^*)(u,Q_i)=\mathcal{A}_i(u)=0$, and $\nu_i=\infty$ if $\mathcal{A}_i(u)=0$ but $(E-E^*)(u,Q_i)>0$.

\begin{theorem}\label{thm: 3 circle thm for u-u*}
Let $\beta_0$ be as given in Theorem \ref{thm:3 circle thm for u}. For any $q\in (0,2)$ and $a>0$, suppose that $u$ is a $(\tau,\epsilon)$-almost harmonic function on $Q$ satisfying
$$
\min\{\lambda_1,\lambda_2,\lambda_3\}\geq\beta_0\quad\text{and}\quad\max\{\nu_1,\nu_2,\nu_3\}>a.
$$ 
Then, for any $L>L_1(m,q)=\max\{L_0(m,q),\frac{2\log 2}{2-q}\}$, there exists a positive constant $\epsilon_0''$, depending on $q, L, a$, and $\tau$, such that if $\epsilon \le \epsilon_0''$, 
both $(E-E^*)(u,Q_i)$ and $\nu_i^2$ satisfy the 3-circle lemma for $(q,L)$.
\end{theorem}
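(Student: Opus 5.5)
We argue by contradiction and compactness, as in the proof of Theorem \ref{thm:3 circle thm for u}, now normalizing by the critical mode instead of the energy. Suppose the conclusion fails for $(E-E^*)(u,Q_i)$. Then there are $\epsilon_k\to0$ and $(\tau,\epsilon_k)$-almost harmonic $u_k$ with $\min_i\lambda_i(u_k)\ge\beta_0$ and $\max_i\nu_i(u_k)>a$, but
\[
(E-E^*)(u_k,Q_2)>e^{-qL}\big((E-E^*)(u_k,Q_1)+(E-E^*)(u_k,Q_3)\big).
\]
By Lemma \ref{lem:ratio of lambda}, $\mathcal A_1,\mathcal A_2,\mathcal A_3$ are comparable, with ratios tending to $1$ as $\epsilon_k\to0$. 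Since $\lambda_i\ge\beta_0$, each $E(u_k,Q_i)\le\beta_0^{-2}\mathcal A_i^2$. We therefore normalize
\[
v_k=\frac{u_k-\bar u_k}{\mathcal A_2(u_k)},
\]
so that $E(v_k,Q)\le C(\beta_0)$. Exactly as before, \eqref{almost.harmonic.1} and \eqref{almost.harmonic.2} give $v_k\rightharpoonup v$ in $W^{2,p}_{loc}$, $v_k\to v$ in $W^{1,2}_{loc}$, and $\|\Delta v_k\|_{L^1}\to0$, so $v$ is harmonic. By the trace argument of Lemma \ref{est.ahat}, $\mathcal A_i(v)=\lim\mathcal A_i(v_k)=1$ for all $i$. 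Also $\max_i\nu_i(v_k)>a$ passes to the limit as $\max_i\sqrt{(E-E^*)(v,Q_i)}\ge a$, because $E^*$ is a single Fourier mode and converges under $W^{1,2}_{loc}$ convergence.

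\textbf{Main obstacle: loss of energy at the ends.} The convergence is only local, so the energies on $Q_1$ and $Q_3$ may lose mass near $t=0$ and $t=3L$. I would handle this as in Theorem \ref{thm:3 circle thm for u}: Fatou's lemma on the outer cylinders gives $(E-E^*)(v,Q_j)\le\liminf(E-E^*)(v_k,Q_j)$ for $j=1,3$. On $Q_2$, which is compactly contained in $Q$, we get strong convergence. Hence
\[
(E-E^*)(v,Q_2)\ge e^{-qL}\big((E-E^*)(v,Q_1)+(E-E^*)(v,Q_3)\big).
\]
The one subtlety is that $E^*(v_k,Q_j)$ must converge for the outer pieces too. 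I would control this using \eqref{diff.Am}: $e^{-mt}\hat a_m$ is nearly constant, with variation $O(\epsilon_k)$ relative to $\mathcal A_2$, so $E^*(v_k,Q_j)\to\pi L/2\cdot$const $=E^*(v,Q_j)$ up to $o(1)$. Writing $E-E^*$ and using Fatou on $E$ then still works.

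For the harmonic limit, \eqref{E.E*.har} gives $(E-E^*)(v,Q_i)=E(v-\mathcal F(v),Q_i)$, and $w=v-\mathcal F(v)$ is harmonic with vanishing $m$-th mode. If $w\not\equiv0$, the earlier harmonic lemma (for $L>L_0(m,q)$, $q<2$) gives a strict inequality $E(w,Q_2)\le e^{-2L}(\dots)<e^{-qL}(\dots)$, a contradiction. If $w\equiv0$, the normalization $\max_i\sqrt{(E-E^*)(v,Q_i)}\ge a>0$ is violated. This is exactly where the hypothesis $\max\nu_i>a$ is used, since it rules out the degenerate limit.

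For $\nu_i^2=(E-E^*)(u,Q_i)/\mathcal A_i^2$, I would deduce the statement from the first part with a slightly larger exponent. Choose $q'\in(q,2)$ and apply the first part with $q'$. Lemma \ref{lem:ratio of lambda} gives $\mathcal A_j^2/\mathcal A_2^2\le 1+o(1)\le 2$ for $\epsilon$ small, so
\[
\nu_2^2\le e^{-q'L}\Big(\tfrac{\mathcal A_1^2}{\mathcal A_2^2}\nu_1^2+\tfrac{\mathcal A_3^2}{\mathcal A_2^2}\nu_3^2\Big)\le 2e^{-q'L}(\nu_1^2+\nu_3^2).
\]
This is at most $e^{-qL}(\nu_1^2+\nu_3^2)$ once $2e^{-(q'-q)L}\le1$. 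Alternatively, the same compactness argument applies directly, since in the limit all $\mathcal A_i(v)=1$, so $\nu_i^2(v)=(E-E^*)(v,Q_i)$ and the limit inequality is identical. The threshold $L>\frac{2\log2}{2-q}$ in the statement reflects exactly this factor: taking $q'=2$ in the harmonic bound $e^{-2L}$ absorbs the constant $2$ when $2e^{-2L}\le e^{-qL}$.
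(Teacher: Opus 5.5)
Your route is the paper's: contradiction and compactness, a harmonic limit, the harmonic three-circle lemma applied to $v-\mathcal F(v)$, and then the $\nu_i^2$ case deduced from the $E-E^*$ case with a larger exponent and Lemma \ref{lem:ratio of lambda}. The only difference is that you normalize by $\mathcal A_2(u_k)$ instead of $\sqrt{(E-E^*)(u_k,Q_2)}$. That makes the energy bound free, but it shifts nondegeneracy onto the hypothesis $\max_i\nu_i>a$, and this is where your argument has a gap.

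You assert that $\nu_{i'}(v_k)>a$ passes to the limit as $(E-E^*)(v,Q_{i'})\ge a^2$. This requires \emph{upper} semicontinuity of $(E-E^*)(\cdot,Q_{i'})$. That holds for $i'=2$, but not for $i'=1,3$. Those blocks reach $t=0$ and $t=3L$, where you only have local convergence, so $(E-E^*)(v_k,Q_1)$ may concentrate at $t=0$ and vanish in the limit. Adding harmonic high modes $c_je^{-jt}\cos j\theta$ with $j\to\infty$ does exactly this; it is the end loss you describe in your next paragraph. So, as written, you have not excluded the case where $v-\mathcal F(v)$ is constant.

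The repair is to route the lower bound through $Q_2$ using the failing inequality, which you never use for this purpose. For $i'\neq 2$,
\[
(E-E^*)(v_k,Q_2)>e^{-qL}(E-E^*)(v_k,Q_{i'})>e^{-qL}a^2\mathcal A_{i'}(v_k)^2\to e^{-qL}a^2 .
\]
Since $Q_2$ is interior, this gives $(E-E^*)(v,Q_2)\ge e^{-qL}a^2>0$. In the paper the same inequality plays this role: it gives $\sum_i(E-E^*)(v_k,Q_i)\le C$, and hence the bound on $\mathcal A_{i'}(v_k)$.

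Two smaller points.

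\begin{enumerate}
\item Your ``one subtlety'' paragraph is unjustified, but also unnecessary. \eqref{almost.harmonic.2} controls $\Delta u$ only on $S^1\times[r,3L-r]$, with constant $\tau(r)$. So \eqref{diff.Am} says nothing about $e^{-mt}\hat a_m$ near $t=0$ or $t=3L$, and $E^*(v_k,Q_1)$ need not converge. You do not need it: $E-E^*=\frac{\pi}{2}\sum_{p\neq m}\int e^{-2mt}(|\hat a_p|^2+|\hat b_p|^2)\,dt$ has a nonnegative density in $t$. Convergence on $Q_{\delta,L}$ followed by $\delta\to0$ therefore gives lower semicontinuity on $Q_1$ directly, which is your first sentence.

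\item Your explanation of the threshold is off. $q'=2$ is not admissible in the first part, since the contradiction with $E(w,Q_2)\le e^{-2L}(E(w,Q_1)+E(w,Q_3))$ needs $q'<2$. It would also give $\log 2/(2-q)$, not $2\log2/(2-q)$. The paper takes $\bar q=(q+2)/2$, and $2e^{-\bar qL}\le e^{-qL}$ is exactly $L\ge 2\log2/(2-q)$.
\end{enumerate}

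Your alternative direct compactness argument for $\nu_i^2$ is correct once the same repair is made. Since $\mathcal A_i(u_k)/\mathcal A_2(u_k)\to 1$, it needs no such threshold on $L$ at all.
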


\begin{proof}
We first prove the assertion for \(E-E^*\). Suppose the result is false. Then there exist a sequence $\epsilon_k \to 0$, a sequence $u_k$ satisfying \eqref{almost.harmonic.1}-\eqref{almost.harmonic.2}, and some $i' \in \{1,2,3\}$ such that
$$
\min\{\lambda_1(u_k),\lambda_2(u_k),\lambda_3(u_k)\}\geq\beta_0,
$$
\begin{equation*}\label{eq:nu i'}
\nu_{i'}(u_k)>a\quad\text{for some}~i'=1,2,3
\end{equation*}
and
\begin{equation}\label{eq:no 3 circle thm for E}
0\neq (E-E^*)(u_k,Q_2)>e^{-qL}\left((E-E^*)(u_k,Q_1)+(E-E^*)(u_k,Q_3)\right).
\end{equation}
Set
$$
v_k=\frac{u_k-\bar{u}_k}{\sqrt{(E-E^*)(u_k,Q_2)}}.
$$
According to \eqref{eq:no 3 circle thm for E}, we have 
$$
\sum_{i=1}^3(E-E^*)(v_k,Q_i)\leq C\quad\text{and}\quad (E-E^*)(v_k,Q_2)=1.
$$
Since $\nu_{i'}(v_k)=\nu_{i'}(u_k)>a$, we have
$$
\mathcal{A}^2_{i'}(v_k)<\frac{1}{a^2}(E-E^*)(v_k,Q_{i'})\leq \frac{C}{a^2},
$$
and hence $$\mathcal{A}_{i'}(v_k)<C'.$$

Moreover, since $\lambda_i(v_k)=\lambda_i(u_k)\geq\beta_0$, we may assume
$$
\mathcal{A}_{i}(v_k)>0\qquad\text{for}~~i=1,2,3.
$$
By Lemma \ref{lem:ratio of lambda}, for $\epsilon_0''$ sufficiently small,
$$
\frac{1}{2}<\frac{\mathcal{A}_{i+1}(v_k)}{\mathcal{A}_i(v_k)}<2.
$$
It follows that for all $i=1,2,3$, 
$$
\mathcal{A}_i(v_k)<C''.
$$
Using again $\lambda_i(u_k) \ge \beta_0$, we deduce from $\lambda_i(v_k)\geq\beta_0$ that
$$
\sqrt{E(v_k,Q_i)}\leq\frac{1}{\beta_0}\mathcal{A}_i(v_k)<\frac{C''}{\beta_0},
$$
and hence
$$\sum_{i=1}^3\int_{Q_i}|\nabla v_k|^2\,dtd\theta<C.$$
By \eqref{almost.harmonic.2}, there holds
$$
\|\Delta v_k\|_{L^1(Q_{\delta,3L-\delta})}=\frac{\|\Delta u_k\|_{L^1(Q_{\delta,3L-\delta})}}{\sqrt{(E-E^*)(u_k,Q_2)}}\leq \tau\epsilon_k\|\nabla v_k\|_{L^2(Q)}\to 0,\qquad\text{as}~~k\to\infty.$$
Passing to a subsequence, we obtain
\[
v_k \rightharpoonup v \text{ in } W^{2,p}_{\mathrm{loc}}(Q),
\qquad
v_k \to v \text{ in } W^{1,2}_{\mathrm{loc}}(Q),
\]
where $v$ is harmonic and satisfies
$$(E-E^*)(v,Q_2)=1.$$ 

By the weak lower semicontinuity of the energy, 
\begin{equation*}
\begin{split}
&\quad e^{-qL}\Big((E-E^*)(v,Q_1)+(E-E^*)(v,Q_3)\Big)\\
&\leq \liminf_{k\to+\infty}e^{-qL}\Big((E-E^*)(v_k,Q_1)+(E-E^*)(v_k,Q_3)\Big)\\
&\leq\lim_{k\to+\infty}(E-E^*)(v_k,Q_2)\\
&=(E-E^*)(v,Q_2),
\end{split}
\end{equation*}
which contradicts the 3-circle inequality for the harmonic function $v-\mathcal{F}(v)$. This proves the claim for $(E-E^*)(u,Q_i)$.

\medskip
	
Next, we proceed to prove the claim for $\nu_i^2$ by considering two cases.
	
\textbf{Case 1: $(E-E^*)(u,Q_2)=0$.}
By our convention, $\nu_2 = 0$, and there is nothing to prove.
	
\textbf{Case 2: $(E-E^*)(u,Q_2)>0$.}
Observe first that $\min\{\lambda_1,\lambda_2,\lambda_3\}\geq\beta_0 > 0$ implies $\mathcal{A}_i(u)>0$ for $i=1,2,3$.
By Lemma \ref{lem:ratio of lambda}, there exists $\epsilon_0'>0$ such that for any $\epsilon\leq \epsilon_0'$, we have
\[
\frac{1}{\sqrt{2}} \leq \frac{\mathcal A_2(u)}{\mathcal A_1(u)}, \frac{\mathcal A_2(u)}{\mathcal A_3(u)} \leq \sqrt{2}.
\]
Moreover, letting $\bar{q} = \frac{q+2}{2}$, we have just proved the existence of some $\epsilon_0'' > 0$ such that if $\epsilon\leq \epsilon_0''$, then $(E-E^*)(u,Q_i)$ satisfies the 3-circle lemma for $(\bar{q}, L)$, i.e.
\[
(E-E^*)(u,Q_1) +(E-E^*)(u,Q_3)\geq e^{\bar{q}L} (E-E^*)(u,Q_2) > 0.
\]
Defining $\epsilon_0:= \min\{\epsilon_0', \epsilon_0''\}$, it then follows that for all $\epsilon<\epsilon_0$,
\begin{equation*}
    \begin{split}
    \frac{\nu^2_2}{\nu^2_1 + \nu^2_3}
&= \frac{\frac{(E-E^*)(u,Q_2)}{\mathcal A_2^2(u)}}{\frac{(E-E^*)(u,Q_1)}{\mathcal A_1^2(u)} + \frac{(E-E^*)(u,Q_3)}{\mathcal A_3^2(u)}}
\leq \frac{e^{-\bar{q}L}}{\min\left\{\frac{\mathcal A_2^2(u)}{\mathcal A_1^2(u)},\frac{\mathcal A_2^2(u)}{\mathcal A_3^2(u)}\right\}}\\
&\leq 2e^{-\bar{q}L}\leq e^{-qL}.
\end{split}
\end{equation*}
This completes the proof of the claim for $\nu_i^2$.
\end{proof}
	
\subsection{Energy Estimates on Long Cylinders}
We now investigate the behavior of the energy on long cylinders. By iterating the 3-circle inequality obtained above, we derive propagation and decay estimates along the cylinder.
	
\begin{theorem}\label{thm:long.cylinder}
Let $\beta_0$ be as given in Theorem \ref{thm:3 circle thm for u} and $q'\in(0,2)$. Suppose that $u$ is a $(\tau,\epsilon)$--almost harmonic function on $[0, lL] \times S^1$. Then there exist integers $\mathfrak{a}$ and $\mathfrak b$ satisfying 
$$
0 < \mathfrak a <\mathfrak b < l-1
$$ 
such that the following properties hold:
\begin{enumerate}
\item There exists $\epsilon_1>0$ such that if $\epsilon<\epsilon_1$, then
\[
E(u,Q_i) \leq C e^{-iq' L}  E(u,Q_1) \quad \text{for } i \in [1,\mathfrak a],
\]
\[
E(u,Q_i) \leq C e^{-(l-i)q' L}  E(u,Q_l) \quad \text{for } i \in [ \mathfrak b, l],
\]
and
\[
\lambda_i \geq \beta_0 \quad \text{for } i \in [\mathfrak a+1, \mathfrak b-1]
\quad
\text{and}
\quad
\lambda_i < \beta_0 \quad \text{for } i \in [1, \mathfrak a] \cup [\mathfrak b, l].
\]
\item For any integer $2 < r < \dfrac{1}{4}(\mathfrak b-\mathfrak a)$, there exists $\epsilon_2 =\epsilon_2(r)$ such that if $\epsilon<\epsilon_2$, then
\[
\nu_i \leq (1 + \frac{1}{\beta_0^2}) e^{-\frac{q'(r-2)L}{4}}, \quad \forall\, \mathfrak a+r < i < \mathfrak b-r.
\]
\end{enumerate}
\end{theorem}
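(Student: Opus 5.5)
The proof iterates the local results: Theorem \ref{thm:3 circle thm for u} on each triple $Q_{i-1},Q_i,Q_{i+1}$ where some $\lambda_j<\beta_0$, and Theorem \ref{thm: 3 circle thm for u-u*} on triples where all $\lambda_j\ge\beta_0$. Fix $q\in(q',2)$ and $L>L_1(m,q)$, and let $\epsilon_1$ be the minimum of the constants $\epsilon_0,\epsilon_0',\epsilon_0''$ from those results, applied to the shifted triples. The almost-harmonic hypotheses are translation invariant after shifting $t$, since the weight $e^{-2mt}$ only rescales all quantities by a common factor, so the constants are uniform in $i$.

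\textbf{Step 1.} Define $\mathfrak a$ as the largest index such that $\lambda_i<\beta_0$ for all $i\le\mathfrak a$, and $\mathfrak b$ as the smallest index such that $\lambda_i<\beta_0$ for all $i\ge \mathfrak b$. If the set $\{i:\lambda_i\ge\beta_0\}$ is empty or has a gap, we must handle indices with $\lambda_i<\beta_0$ inside the middle. The natural fix is to let $[\mathfrak a+1,\mathfrak b-1]$ be the maximal run of consecutive indices with $\lambda_i\ge\beta_0$. If $\lambda$ dips again outside this run, then every triple containing a low index still satisfies the $E$ three-circle inequality by Theorem \ref{thm:3 circle thm for u}. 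Hence on the whole complement of the run the sequence $\Phi_i=E(u,Q_i)$ satisfies the three-circle lemma for $(q,L)$ on every consecutive triple centered there.

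\textbf{Step 2.} Apply the abstract iteration from the introduction to $\Phi_i$ on $[1,\mathfrak a+1]$. For $L$ large, the dichotomy "either $\Phi_i\ge e^{-q'L}\Phi_{i-1}$ or $\Phi_i\ge e^{-q'L}\Phi_{i+1}$", together with its propagation, shows that $\Phi$ is first decreasing geometrically and then increasing. We would like to rule out the increasing branch on $[1,\mathfrak a]$. Where $\Phi$ grows toward $\mathfrak a$, we only get $\Phi_i\le Ce^{-(\mathfrak a-i)q'L}\Phi_{\mathfrak a}$, so the stated bound $\Phi_i\le Ce^{-iq'L}\Phi_1$ needs $\mathfrak a$ chosen as the end of the decreasing regime. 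If necessary, I would redefine $\mathfrak a$ as the last index of geometric decay from the left. On this interval $\lambda<\beta_0$ persists, since Theorem \ref{thm:3 circle thm for u} only requires the minimum to be small. The right end is handled symmetrically. This bookkeeping on the meaning of $\mathfrak a,\mathfrak b$ is where care is needed, and I would follow the statement's intended convention.

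\textbf{Step 3.} For part (2), work on the run $[\mathfrak a+1,\mathfrak b-1]$, where all $\lambda_i\ge\beta_0$. By Lemma \ref{lem:ratio of lambda}, the $\mathcal A_i$ vary by factors close to $1$, and $\nu_i^2\le \lambda_i^{-2}E/E\le\beta_0^{-2}$; in particular $\nu_i\le 1/\beta_0$. Apply Theorem \ref{thm: 3 circle thm for u-u*} with threshold $a=(1+\beta_0^{-2})e^{-q'(r-2)L/4}$. Its constant $\epsilon_0''(a)$ depends on $r$, which is the origin of $\epsilon_2(r)$. Then on every triple inside the run with some $\nu_j>a$, the sequence $\nu_i^2$ satisfies the three-circle lemma for $(q,L)$. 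Iterating as in Step 2 on a maximal subinterval where $\nu>a$, and starting from the bound $\nu^2\le\beta_0^{-2}$ at its ends, gives
\[
\nu_i^2\le C\beta_0^{-2}\big(e^{-q'(i-\mathfrak a-1)L}+e^{-q'(\mathfrak b-1-i)L}\big).
\]
For $\mathfrak a+r<i<\mathfrak b-r$, this is at most $C\beta_0^{-2}e^{-q'(r-1)L}$, which beats the threshold for $L$ large; any index where $\nu\le a$ already satisfies the bound. The main obstacle is the first step of the iteration near the ends of the run: triples straddling $\mathfrak a$ do not meet the hypothesis $\min\lambda\ge\beta_0$. This is why only indices at distance more than $r$, with losses of order $r-2$ in the exponent and the factor $1/4$, are claimed. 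I would start the iteration two cells inside the run and absorb the resulting loss into the exponent $q'(r-2)L/4$.
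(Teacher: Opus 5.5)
Your Part (2) is fine. Iterating the three-circle inequality for $\nu_i^2$ inside the run, with threshold $\kappa$ and starting from the a priori bound $\nu_j\le 1/\lambda_j\le 1/\beta_0$, is the same mechanism as the paper's argument. The paper argues by contradiction: $\nu_i>\kappa$ forces $\nu_{i\pm1}\ge e^{q'L/4}\nu_i$, and after $r-2$ steps this exceeds that bound.

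Part (1), however, has a genuine gap. The statement asserts that $\{i:\lambda_i\ge\beta_0\}$ is exactly $[\mathfrak a+1,\mathfrak b-1]$, and that $E$ decays geometrically away from the two ends on $[1,\mathfrak a]$ and $[\mathfrak b,l]$. You establish neither.
\begin{itemize}
\item Choosing a maximal run does not give $\lambda_i<\beta_0$ on its complement.
\item Your claim that every triple centered in the complement satisfies the $E$ three-circle inequality does not follow. Theorem \ref{thm:3 circle thm for u} does not apply to a triple whose three $\lambda$'s are all $\ge\beta_0$, and such a triple exists as soon as a second run of length at least $3$ occurs.
\item Redefining $\mathfrak a$ as the end of the decreasing regime would put indices with $\lambda_i<\beta_0$ inside $[\mathfrak a+1,\mathfrak b-1]$. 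So it cannot produce the stated dichotomy either.
\item The remark that ``$\lambda<\beta_0$ persists since Theorem \ref{thm:3 circle thm for u} only requires the minimum to be small'' is not an argument. That theorem says nothing about $\lambda_{i\pm1}$.
\end{itemize}

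The missing idea is a propagation step based on the second estimate of Lemma \ref{est.ahat}. Suppose $\lambda_i<\beta_0$ and $E(u,Q_i)\le e^{-q'L}E(u,Q_{i+1})$. Then $\mathcal A_{i+1}\le \mathcal A_i+C\epsilon\sqrt{E(u,Q_{i+1})}$ and $\mathcal A_i=\lambda_i\sqrt{E(u,Q_i)}\le \lambda_i e^{-q'L/2}\sqrt{E(u,Q_{i+1})}$. Hence
\[
\lambda_{i+1}\le e^{-q'L/2}\lambda_i+C\epsilon<\beta_0
\]
for $\epsilon$ small. Theorem \ref{thm:3 circle thm for u} at the triple centered at $i+1$, with $q>q'$ and $L$ large, then forces $E(u,Q_{i+1})\le e^{-q'L}E(u,Q_{i+2})$. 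By induction $\lambda_j<\beta_0$ for all $j\ge i$, and symmetrically to the left. This gives two conclusions:
\begin{itemize}
\item The high set is either empty or an interval $[\xi,\eta]$.
\item At every low index $1<i<\xi$, the growth alternative $E(u,Q_i)\le e^{-q'L}E(u,Q_{i+1})$ is impossible, since it would make $\lambda_\xi<\beta_0$. Hence $E(u,Q_i)\le e^{-q'L}E(u,Q_{i-1})$, which yields $E(u,Q_i)\le Ce^{-iq'L}E(u,Q_1)$ on $[1,\mathfrak a]$ with $\mathfrak a=\xi-1$, and no redefinition is needed.
\end{itemize}
The right end is symmetric. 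If the high set is empty, take $\mathfrak a$ at the minimum of the usual decrease-then-increase profile and $\mathfrak b=\mathfrak a+1$.
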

	
\begin{proof}
(1) Let 
$$
I = \{1 \leq i \leq l \mid \lambda_i \geq \beta_0\}.
$$ 
We first claim that 
\begin{equation}\label{eq:2.17}
\text{either } I = \emptyset\quad\text{or}\quad I = [\xi,\eta]  
\end{equation}
for some integers $1 \leq \xi \leq \eta \leq l$.

Assume there exists $1<i<l$ with $\lambda_i<\beta_0$. By Theorem \ref{thm:3 circle thm for u}, for $\epsilon$ sufficiently small,
\[
\text{either } E(u,Q_i)\le e^{-q'L}E(u,Q_{i+1})
\quad \text{or} \quad
E(u,Q_i)\le e^{-q'L}E(u,Q_{i-1}).
\]

Suppose the first case holds. We claim that $\lambda_{i+1}<\beta_0$. Otherwise $\lambda_{i+1}\ge\beta_0$, and by \eqref{Ai},
\[
|\mathcal A_{i+1}(u)-\mathcal{A}_i(u)|
\le C\epsilon(\sqrt{E(u,Q_i)}+\sqrt{E(u,Q_{i+1})})\leq C\epsilon\sqrt{E(u,Q_{i+1})}. 
\]
Then
\[
\left|1-\frac{\mathcal A_i(u)}{\mathcal A_{i+1}(u)}\right|
\le \frac{C\epsilon}{\lambda_{i+1}}
\le \frac{C\epsilon}{\beta_0}.
\]
Choosing $\epsilon$ small such that
\[
\mathcal A_{i+1}(u)
\le e^{\frac{qL}{2}}\mathcal A_i(u).
\]
Together with $E(u,Q_i)\le e^{-qL}E(u,Q_{i+1})$, we obtain $\lambda_{i+1}<\lambda_i<\beta_0$, which leads to a contradiction.

Hence $\lambda_{i+1}<\beta_0$. Iterating this argument shows $[i,l]\cap I=\emptyset$. A similar argument in the second case yields $[1,i]\cap I=\emptyset$. This proves the claim.

The rest of Part (1) follows by applying Theorem \ref{thm:3 circle thm for u}.

\medskip

(2) Fix $2 < r < \frac{\mathfrak b -\mathfrak a}{4}$ and set
$$
\kappa =  e^{- \frac{(r-2) q' L}{4}} ( 1 + \frac{1}{\beta_0^2} ).
$$

Suppose there exists $\mathfrak a+r<i<\mathfrak b-r$ such that $\nu_i>\kappa$. By the 3-circle inequality for $(E-E^*)(u,Q_i)$, we can choose $\epsilon(r)$, such that
\[
(E-E^*)(u,Q_i)\le e^{-q'L}(E-E^*)(u,Q_{i+1})
\]
or the analogous backward inequality. Assume the former.

Since $\epsilon<\epsilon_2$, and $\min \{ \lambda_{i-1}, \lambda_i, \lambda_{i+1} \} \geq \beta_0$, by the convention \eqref{eq:convention for A_m(u)}, it follows that $\mathcal A_{i+1}(u)>0$. By Lemma \ref{lem:ratio of lambda}, we have
$$
1 - C \frac{\epsilon_2}{\beta_0} \leq \frac{\mathcal{A}_{i+1}(u)}{\mathcal{A}_i(u)} \leq 1 + C \frac{\epsilon_2}{\beta_0}.
$$
Choosing $\epsilon_2$ to be sufficiently small yields
$$
0 < \mathcal{A}_{i+1}(u)\leq e^{\frac{q' L}{4}} \mathcal{A}_i(u).
$$
Combining this with $(E-E^*)(u,Q_i)\leq e^{-q' L} (E-E^*)(u,Q_{i+1})$, we obtain
$$
\nu_{i+1} \geq e^{\frac{q'L}{4}} \nu_i > e^\frac{q'L}{4} \kappa.
$$

Iterating this inequality $r-2$ times gives
\begin{equation}\label{eq:2.20}
	\nu_{i+r-2} > e^{\frac{q' (r-2) L}{4}} \kappa = 1 + \frac{1}{\beta_0^2}. 
\end{equation}
However, since $i'':=i+r-2<\mathfrak b$, we have $\lambda_{i''}\geq \beta_0$, which implies
\begin{equation*}
\begin{split}
\nu_{i''}=\frac{\sqrt{(E-E^*)(u,Q_{i''})}}{\mathcal{A}_{i''}(u)}\leq \frac{\sqrt{E(u,Q_{i''})}}{\mathcal{A}_{i''}(u)}=\frac{1}{\lambda_{i''}}\leq \frac{1}{\beta_0^2}.
	\end{split}
\end{equation*}
This contradicts \eqref{eq:2.20}, which completes the proof of Part (2).
\end{proof}

\subsection{Asymptotic Analysis of Sequences on Long Cylinders}
We now consider a sequence of maps $u_k$ defined on cylinders $S^1 \times [0,m_kL]$ with $m_k \to +\infty$. We study the asymptotic behavior of $\{u_k\}$ using the estimates obtained on finite cylinders.\\

First, we prove Theorem \ref{main1}.

\begin{proof}
Applying Theorem \ref{thm:long.cylinder} to $u_k$, we obtain
integers $\mathfrak a_k<\mathfrak b_k$ such that
\[
\lambda_i(u_k)< \beta_0 \text{ for } 1\leq i\leq\mathfrak a_k, \text{ and }\mathfrak b_k\leq i\leq m_k;
\s
\lambda_i(u_k)\geq \beta_0  \text{ for } \mathfrak a_k+1\leq i\leq\mathfrak b_k-1.
\]

\textbf{Case 1.} Suppose that there exists a constant $\Lambda>0$ such that $\mathfrak b_k-\mathfrak a_k\leq\Lambda$ for all $k$. 

By Theorem \ref{thm:long.cylinder}, for all sufficiently large $k$,
\[
E(u_k,Q_i)\le
\begin{cases}
e^{-qiL}\,\Theta_k, & 1\leq i\leq\mathfrak a_k,\\[\mv]
e^{-(m_k-i)qL}\,\Theta_k, & \mathfrak b_k\leq i\leq m_k.
\end{cases}
\]
Consequently, for every $\gamma>0$,
\[
\left(
\sum_{i=1}^{\mathfrak a_k}
+
\sum_{i=\mathfrak b_k}^{m_k}
\right)
E^\gamma(u_k,Q_i)
\leq C\Theta_k^\gamma.
\]

For $\mathfrak a_k+1\le i\le\mathfrak b_k-1$, Lemma \ref{est.ahat} gives
\[
\begin{aligned}
\mathcal A_{\mathfrak a_k+1}(u_k)
&\leq
\mathcal A_{\mathfrak a_k}(u_k)
+
C\epsilon_k
\left(
\sqrt{E(u_k,Q_{\mathfrak a_k})}
+
\sqrt{E(u_k,Q_{\mathfrak a_k+1})}
\right)\\
&\leq
(\beta_0+C\epsilon_k)
\sqrt{E(u_k,Q_{\mathfrak a_k})}
+
\frac{C\epsilon_k}{\beta_0}
\mathcal A_{\mathfrak a_k+1}(u_k),
\end{aligned}
\]
where we used
\[
\mathcal A_{\mathfrak a_k}(u_k)
=
\lambda_{\mathfrak a_k}(u_k)
\sqrt{E(u_k,Q_{\mathfrak a_k})}
<
\beta_0\sqrt{E(u_k,Q_{\mathfrak a_k})}
\]
and
\[
\sqrt{E(u_k,Q_{\mathfrak a_k+1})}
=
\frac{\mathcal A_{\mathfrak a_k+1}(u_k)}
     {\lambda_{\mathfrak a_k+1}(u_k)}
\leq
\frac{\mathcal A_{\mathfrak a_k+1}(u_k)}
     {\beta_0}.
\]
As \(\epsilon_k\to0\), the preceding inequality yields, for all sufficiently large \(k\),
\[
\mathcal A_{\mathfrak a_k+1}(u_k)
\leq
C\sqrt{E(u_k,Q_{\mathfrak a_k})},
\]
and therefore
\[
E(u_k,Q_{\mathfrak a_k+1})
\leq
\frac{\mathcal A_{\mathfrak a_k+1}(u_k)^2}{\beta_0^2}
\leq
C E(u_k,Q_{\mathfrak a_k})
\leq C\Theta_k.
\]
For
\[
\mathfrak a_k+1\leq i\leq\mathfrak b_k-2,
\]
both $\lambda_i(u_k)$ and $\lambda_{i+1}(u_k)$ are bounded below by
$\beta_0$. Hence Lemma \ref{lem:ratio of lambda} implies, for all
sufficiently large $k$, that
\[
\frac12
\leq
\frac{\mathcal A_{i+1}(u_k)}
     {\mathcal A_i(u_k)}
\leq 2.
\]
Since $\mathfrak b_k-\mathfrak a_k\leq \Lambda$, it follows that
\[
\mathcal A_i(u_k)
\leq
2^\Lambda\mathcal A_{\mathfrak a_k+1}(u_k)
\]
for every
$\mathfrak a_k+1\leq i\leq\mathfrak b_k-1$.
Using $\lambda_i(u_k)\geq\beta_0$, we obtain
\[
\begin{aligned}
E^\gamma(u_k,Q_i)=\frac{\mathcal A_i^{2\gamma}(u_k)}
     {\lambda_i^{2\gamma}(u_k)}
     \leq\frac{2^{2\gamma \Lambda}}{\beta_0^{2\gamma}}
\mathcal A_{\mathfrak a_k+1}^{2\gamma}(u_k)
\leq C\Theta_k^\gamma.
\end{aligned}
\]
Combining the preceding estimates yields
\[
\sum_{i=1}^{m_k}E(u_k,Q_i)^\gamma
\leq C\Theta_k^\gamma.
\]
This proves \eqref{eqn:trivialenergy}.\\

\textbf{Case 2.} We assume $\mathfrak b_k-\mathfrak a_k\rightarrow+\infty$.

The first part of Theorem \ref{thm:long.cylinder} gives
\[
\begin{aligned}
\left(
\sum_{i=1}^{\mathfrak a_k}
+
\sum_{i=\mathfrak b_k}^{m_k}
\right)
E(u_k,Q_i)^\gamma
&\leq
C\Theta_k^\gamma
\left(
\sum_{i=1}^{\mathfrak a_k}e^{-i\gamma qL}
+
\sum_{i=\mathfrak b_k}^{m_k}
e^{-(m_k-i)\gamma qL}
\right)\\
&\leq C\Theta_k^\gamma.
\end{aligned}
\]
This proves \eqref{eqn:trivialends}.

We next study the interior region. For any fixed integer $r>2$, the assumptions \(\mathfrak b_k-\mathfrak a_k\to+\infty\) and \(\epsilon_k\to0\) ensure that, for all sufficiently large $k$, 
\[
r<\frac14(\mathfrak b_k-\mathfrak a_k)
\quad\text{and}\quad
\epsilon_k<\epsilon_2(r).
\]
Therefore, the second part of Theorem
\ref{thm:long.cylinder} gives
\[
\nu_i(u_k)
\leq
\left(1+\frac{1}{\beta_0^2}\right)
e^{-\frac{q'(r-2)L}{2}}\qquad \mathfrak a_k+r<i<\mathfrak b_k-r.
\]
It follows that
\[
\lim_{r\to\infty}\lim_{k\to\infty}
\max_{\mathfrak a_k+r<i<\mathfrak b_k-r}
\nu_i(u_k)=0.
\]

Together with the definition of the central region, Lemma \(\ref{est.ahat}\) gives
$$
\beta_0
\leq
\lambda_i(u_k)
\leq C,
\qquad
\mathfrak a_k<i<\mathfrak b_k.
$$
Hence,
$$
0
\leq
1-\frac{E^*(u_k,Q_i)}{E(u_k,Q_i)}
=
\frac{(E-E^*)(u_k,Q_i)}{E(u_k,Q_i)}
=
\nu_i(u_k)^2\lambda_i(u_k)^2
\leq C\nu_i(u_k)^2.
$$
It follows that
$$
\lim_{r\to\infty}\lim_{k\to\infty}
\max_{\mathfrak a_k+r<i<\mathfrak b_k-r}
\left|
\frac{E^*(u_k,Q_i)}{E(u_k,Q_i)}-1
\right|
=0.
$$

It remains to estimate \(\lambda_i\). For adjacent indices in the central region, Lemma \(\ref{lem:ratio of lambda}\) gives
$$
\frac{1}{C}
\leq
\frac{\mathcal A_i(u_k)}
{\mathcal A_{i-1}(u_k)}
\leq C
$$
for all sufficiently large \(k\). Since
$$
\frac{\sqrt{E(u_k,Q_i)}}
{\sqrt{E(u_k,Q_{i-1})}}
=
\frac{\lambda_{i-1}(u_k)}
{\lambda_i(u_k)}
\frac{\mathcal A_i(u_k)}
{\mathcal A_{i-1}(u_k)},
$$
the preceding bounds imply
$$
\frac{1}{C}
\leq
\frac{E(u_k,Q_i)}
{E(u_k,Q_{i-1})}
\leq C.
$$
Therefore, whenever \(i-1,i,i+1\) lie in the central region, \(\eqref{diff.Am}\), \(E^*\leq E\), and the adjacent energy comparison give
$$
\begin{aligned}
\left|
\frac{\pi L}{2}\lambda_i^2(u_k)
-
\frac{E^*(u_k,Q_i)}{E(u_k,Q_i)}
\right|
&=
\frac{1}{E(u_k,Q_i)}
\left|
\frac{\pi L}{2}\mathcal A_i^2(u_k)
-
E^*(u_k,Q_i)
\right|\\
&\leq
C\epsilon_k
\frac{
E^*(u_k,Q_{i-1})
+
E^*(u_k,Q_i)
+
E^*(u_k,Q_{i+1})
}{
E(u_k,Q_i)
}\\
&\leq C\epsilon_k.
\end{aligned}
$$
Consequently,
$$
\lim_{r\to\infty}\lim_{k\to\infty}
\max_{\mathfrak a_k+r<i<\mathfrak b_k-r}\left|
\frac{\pi L}{2}\lambda_i(u_k)^2-1
\right|=0.
$$

Finally, let $t_k$ be any sequence of integers satisfying \(t_k-\mathfrak a_k\to+\infty\), and \(\mathfrak b_k-t_k\to+\infty\). Set
\(s_k=\left(t_k-\frac12\right)L\) and \(c_k=\vint_{Q_{t_k}}u_k\), and define
\[
v_k(t,\theta)
=
\frac{
e^{-ms_k}\bigl(u_k(s_k+t,\theta)-c_k\bigr)
}{
\sqrt{E(u_k,Q_{t_k})}
}.
\]
For \(\widetilde Q_j=\left[
\left(j-\frac12\right)L,
\left(j+\frac12\right)L
\right]\times S^1\), a direct change of variables gives
\[
E(v_k,\widetilde Q_0)=1.
\]

For every fixed integer $J>0$, the indices $t_k+j$, $|j|\leq J$,
belong to the central region for all sufficiently large $k$.
The bounds
\[
\beta_0\leq\lambda_{t_k+j}(u_k)\leq C
\]
and Lemma \ref{lem:ratio of lambda} implies
\begin{equation}\label{eq:local energy comparability}
    \frac{1}{C_J}\leq
\frac{E(u_k,Q_{t_k+j})}
     {E(u_k,Q_{t_k})}
\leq C_J,
\qquad |j|\leq J.
\end{equation}
The defining estimates for $(\tau,\epsilon_k)$--almost harmonic
functions, together with the Poincar\'e inequality, therefore yield
\[
\|v_k\|_{W^{2,p}(K)}\leq C_K
\]
for every compact set
$K\subset\subset\mathbb R\times S^1$. Moreover,
\[
\|\Delta v_k\|_{L^1(K)}
\leq C_K\epsilon_k\to0.
\]
After passing to a further subsequence and using a diagonal argument,
there exists a harmonic function
$v\colon\mathbb R\times S^1\to\mathbb R^n$ such that
\[
v_k\rightharpoonup v
\quad\text{in }
W_{\mathrm{loc}}^{2,p}(\mathbb R\times S^1).
\]
In particular,
\[
E(v,\widetilde Q_0)=1,
\]
so $v$ is non-constant.

For every fixed $j\in\mathbb Z$, the assumptions on $t_k$ and the
interior estimates above give
\[
\frac{(E-E^*)(u_k,Q_{t_k+j})}
     {E(u_k,Q_{t_k+j})}
\to0.
\]
Combining this with \(\eqref{eq:local energy comparability}\) and the identity
$$ \begin{aligned} (E-E^*)(v_k,\widetilde Q_j) &= \frac{(E-E^*)(u_k,Q_{t_k+j})} {E(u_k,Q_{t_k})}\\ &= \frac{(E-E^*)(u_k,Q_{t_k+j})} {E(u_k,Q_{t_k+j})} \frac{E(u_k,Q_{t_k+j})} {E(u_k,Q_{t_k})}, \end{aligned} $$
we obtain
$$ (E-E^*)(v_k,\widetilde Q_j)\to 0. $$
Since \(v_k\to v\) strongly in \(W_{\mathrm{loc}}^{1,2}\), it follows that
\[
(E-E^*)(v,\widetilde Q_j)=0
\qquad\text{for every }j\in\mathbb Z.
\]
Thus $v$ contains only the critical Fourier mode.

Since $c_k$ is the average of $u_k$ over $Q_{t_k}$, the average of
$v$ over $\widetilde Q_0$ is zero. Therefore, when $m\neq0$, there
exist $A,B\in\mathbb R^n$, not both zero, such that
\[
v(t,\theta)
=
e^{mt}
\left(A\cos m\theta+B\sin m\theta\right).
\]
When $m=0$, there exists a nonzero vector $A\in\mathbb R^n$ such that
\[
v(t,\theta)=At.
\]
This completes the proof.
\end{proof}

Now, we prove Corollary \ref{intr.cor}.

\begin{proof}
Let \(\epsilon>0\). Since \(E(u_k)\leq\Lambda\), we can choose integers
\[
1\leq c_k\leq\left\lfloor\frac{\Lambda}{\epsilon}\right\rfloor+1,
\qquad
\Theta'_k:=E(u_k,Q_{c_k}\cup Q_{m_k-c_k+1})<\epsilon.
\]
Apply the estimates in the proof of Theorem \ref{main1} to the restricted domain
\[
[(c_k-1)L,(m_k-c_k+1)L]\times S^1,
\]
and let \(\mathfrak a_k,\mathfrak b_k\) be the corresponding indices. The exponential end estimates give
\[
\left(\sum_{i=c_k}^{\mathfrak a_k}+\sum_{i=\mathfrak b_k}^{m_k-c_k+1}\right)
E(u_k,Q_i)\leq C\epsilon.
\]
For each fixed integer \(r>2\), Lemma \ref{est.ahat} and \ref{lem:ratio of lambda} give, for all sufficiently large \(k\),
\[
E(u_k,Q_{\mathfrak a_k+j})+E(u_k,Q_{\mathfrak b_k-j})
\leq C_r\epsilon,
\qquad
1\leq j\leq\min\{4r,\mathfrak b_k-\mathfrak a_k-1\}.
\]
Here \(C_r\) is independent of \(k\) and \(\epsilon\).

If \(\mathfrak b_k-\mathfrak a_k\leq4r\), these estimates control the entire central energy by \(C_r\epsilon\). If \(\mathfrak b_k-\mathfrak a_k>4r\), Theorem \ref{thm:long.cylinder} and Lemma \ref{est.ahat} give
\[
\begin{aligned}
(E-E^*)(u_k,Q_i)
&=\nu_i(u_k)^2\lambda_i(u_k)^2E(u_k,Q_i)\\
&\leq\delta_rE(u_k,Q_i),
\qquad \mathfrak a_k+r<i<\mathfrak b_k-r,
\end{aligned}
\]
where \(\delta_r\to0\) as \(r\to\infty\). The first and last \(r\) central blocks have total energy at most \(C_r\epsilon\), while the remaining contribution is at most \(\delta_r\Lambda\). Consequently, in both cases,
\[
(E-E^*)\bigl(u_k,[(c_k-1)L,(m_k-c_k+1)L]\times S^1\bigr)
\leq C_r\epsilon+\delta_r\Lambda
\]
for all sufficiently large \(k\).
Since \(c_k\leq\lfloor\Lambda/\epsilon\rfloor+1\), taking \(T\) large enough gives
\[
[T,m_kL-T]\times S^1
\subset
[(c_k-1)L,(m_k-c_k+1)L]\times S^1.
\]
It follows that
\[
0\leq
\lim_{T\to\infty}\lim_{k\to\infty}
(E-E^*)(u_k,[T,m_kL-T]\times S^1)
\leq C_r\epsilon+\delta_r\Lambda.
\]
Letting \(\epsilon\to0\) with \(r\) fixed, and then \(r\to\infty\), gives zero. 
\end{proof}

\section{Hausdorff limits and the proof of Theorem \ref{main2}}\label{sec:geodesic analysis}
In this section, we set $m=0$ and let $u_k\colon S^1\times[0,m_kL]\rightarrow\R^n$ be a sequence of $(\tau, \epsilon_k)$-almost harmonic functions with $\epsilon_k\rightarrow 0$. Furthermore, we assume that
$$
\sup_i E(u_k,Q_i)\rightarrow 0\qquad \text{as}~k\to\infty.
$$
Specifically, we write $u_k$ as
$$
u_k=a_{k,0}(t)+\sum_{j=1}^\infty \left(a_{k,j}(t)\cos j\theta+b_{k,j}(t)\sin j\theta\right).
$$
Then
$$
E^*(u_k,Q_{a,b})=2\pi\int_a^b |(a_{k,0})'|^2\,dt.
$$

Define a family of curves $\Gamma_k$ as follows
\[
\gamma_k: \,u_k^*(t) = \vint_{S^1} u_k(t, \theta) \, d\theta=a_{k,0}(t),\quad t\in[0, m_kL].
\]
By trace embedding, $u_k(t,\theta)\in W^{1,2}(S^1)$ for each fixed $t$, and
it is easy to check that 
$$
d_H(u_k(S^1\times[L,(m_k-1)L]),u_k^*([L,(m_k-1)L]))\rightarrow 0.
$$

In this section, we study the behavior of the averaged curves $\Gamma_k$. The analysis has three main objectives.
First, we establish an estimate for the length of the averaged curves.
Second, under suitable additional assumptions, we prove the convergence of these curves. We emphasize that this is not the convergence of the maps $u_k$ themselves, but rather the convergence of the associated curves parametrized by arc length. This can be naturally interpreted as a convergence in the varifold sense.
Third, again under additional assumptions, we identify the limiting equation satisfied by the resulting curve.

Let $\mathfrak a_k$, $\mathfrak b_k$ be as in Theorem \ref{main1}. 
First, we prove the following estimates:
\begin{lem}\label{lem:est.a0}
There exist \(\delta_r\to0\) as \(r\to\infty\) such that, for each fixed \(r>2\) and all sufficiently large \(k\), the following estimates hold for every \(\mathfrak a_k+r<i<\mathfrak b_k-r\):
\begin{equation}\label{eqn:a0E1}
\left|\frac{E(u_k,Q_i)}{2\pi L|a_{k,0}'(t_i)|^2}-1\right|+
\frac{\int_{Q_i}|\nabla_\theta u_k|^2}{|a_{k,0}'(t_i)|^2}
\leq \delta_r,
\end{equation}
and
\begin{equation}\label{eqn:a0E2}
\left|\frac{\int_{Q_i}|\nabla u_k|}{2\pi L|a_{k,0}'(t_i)|}-1\right|+
\frac{\int_{Q_i}|\nabla_\theta u_k|}{|a_{k,0}'(t_i)|}+\left\|\frac{\dot u_k^*(t)}{a_{k,0}'(t_i)}-1\right\|_{C^0([(i-1)L,iL])}
\leq \delta_r.
\end{equation}
\end{lem}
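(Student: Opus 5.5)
With $m=0$ we have $\hat a_0=2a_{k,0}'$ and $\hat b_0=0$. Hence $\mathcal A_i(u_k)=2|a_{k,0}'(t_i)|$ and $E^*(u_k,Q_{a,b})=2\pi\int_a^b|a_{k,0}'|^2dt$, so $\frac{\pi L}{2}\lambda_i^2=\frac{2\pi L|a_{k,0}'(t_i)|^2}{E(u_k,Q_i)}$. The plan is to read off \eqref{eqn:a0E1} from \eqref{eq:limit of lambda_i,nu_i} in Theorem \ref{main1}, and then derive the $L^1$ and $C^0$ statements in \eqref{eqn:a0E2} from \eqref{eqn:a0E1} together with \eqref{diff.Am}.

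\textbf{Step 1: proof of \eqref{eqn:a0E1}.} The first summand is $|(\frac{\pi L}{2}\lambda_i^2)^{-1}-1|$. Since $\frac{\pi L}{2}\lambda_i^2\to1$ uniformly for $\mathfrak a_k+r<i<\mathfrak b_k-r$ (in the $\lim_r\lim_k$ sense), this term is bounded by some $\delta_r$. For the angular term, use $\int_{Q_i}|\nabla_\theta u_k|^2\le (E-E^*)(u_k,Q_i)=\nu_i^2\mathcal A_i^2=4\nu_i^2|a_{k,0}'(t_i)|^2$. Part (2) of Theorem \ref{thm:long.cylinder} gives $\nu_i\le Ce^{-q'(r-2)L/4}$, and this is where $\delta_r$ comes from.

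\textbf{Step 2: the $C^0$ bound.} By \eqref{diff.Am} with $m=0$, for $t\in[(i-1)L,iL]$ we have
$|a_{k,0}'(t)-a_{k,0}'(t_i)|\le C\int_{Q_i}|\Delta u_k|\le C\tau\epsilon_k\|\nabla u_k\|_{L^2(Q_{i-1}\cup Q_i\cup Q_{i+1})}$.
To get this, apply \eqref{almost.harmonic.2} on the triple of cylinders centred at $Q_i$, translated so that $Q_i$ is the middle block; $\tau$ is evaluated at a fixed $r<L$. The neighbouring energies are comparable to $E(u_k,Q_i)$, because Lemma \ref{lem:ratio of lambda} and the bound $\beta_0\le\lambda_j\le C$ hold in the central region. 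Using Step 1, the right side is therefore at most $C\epsilon_k|a_{k,0}'(t_i)|$. Since $\dot u_k^*=a_{k,0}'$, this gives $\|\dot u_k^*/a_{k,0}'(t_i)-1\|_{C^0}\le C\epsilon_k$, which is below $\delta_r$ once $k$ is large.

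\textbf{Step 3: the $L^1$ bounds.} By Cauchy--Schwarz, $\int_{Q_i}|\nabla_\theta u_k|\le\sqrt{2\pi L}\,(\int_{Q_i}|\nabla_\theta u_k|^2)^{1/2}\le C\sqrt{\delta_r}|a_{k,0}'(t_i)|$. For the full gradient, write $|\nabla u_k|$ and compare it with $|a_{k,0}'(t_i)|$ pointwise. Split $\partial_tu_k=a_{k,0}'(t)+w_t$, where $w=u_k-u_k^*$ satisfies $\int_{Q_i}|w_t|^2\le(E-E^*)(u_k,Q_i)$. Then
$\bigl||\nabla u_k|-|a_{k,0}'(t_i)|\bigr|\le|a_{k,0}'(t)-a_{k,0}'(t_i)|+|w_t|+|\nabla_\theta u_k|$.
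Integrate over $Q_i$ and apply Cauchy--Schwarz to the last two terms; Steps 1 and 2 then bound the result by $C(\epsilon_k+\nu_i)|a_{k,0}'(t_i)|$. Dividing by $2\pi L|a_{k,0}'(t_i)|$ gives the first term of \eqref{eqn:a0E2}, after renaming $\delta_r$.

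\textbf{Main obstacle.} The delicate point is Step 2: the $\epsilon_k$ smallness of $\int_{Q_i}|\Delta u_k|$ must be relative to the local energy of $Q_i$, not the total energy. I plan to handle this by applying Definition \ref{defi of almost hm} to the shifted triple $Q_{i-1}\cup Q_i\cup Q_{i+1}$ and using the adjacent-energy comparability from the proof of Theorem \ref{main1}. This requires $i\pm1$ to remain in the central region, which holds since $r>2$.
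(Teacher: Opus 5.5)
Your proposal is correct and follows essentially the same route as the paper: you read off \eqref{eqn:a0E1} from the limits of $\frac{\pi L}{2}\lambda_i^2$ and $\nu_i$ in Theorem~\ref{main1}, then get the $C^0$ bound from \eqref{diff.Am} together with the almost-harmonic estimate on the triple centred at $Q_i$ and adjacent-energy comparability, and finally obtain the $L^1$ bounds by Cauchy--Schwarz. Your pointwise splitting in Step 3 also makes explicit the term $|a_{k,0}'(t)-a_{k,0}'(t_i)|$, which the paper's step $\int_{Q_i}|\partial_t u_k-a_{k,0}'(t_i)|\le C\nu_i|a_{k,0}'(t_i)|$ absorbs silently; the accurate bound there is $C(\nu_i+\epsilon_k)|a_{k,0}'(t_i)|$, exactly as you wrote.
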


\begin{proof}
The first term in \eqref{eqn:a0E1} follows directly from 
$$
\lim_{r\to+\infty}\lim_{k\to+\infty}\max_{\mathfrak a_k+r\leq i\leq \mathfrak b_k-r}\left|\frac{\pi L}{2}\lambda_i^2-1\right|= 0.
$$
Moreover, since 
$$
\int_{Q_i}|\nabla_\theta u_k|^2\leq E(u_k,Q_i)-E^*(u_k,Q_i),
$$
the second term in \eqref{eqn:a0E1} follows from 
$$
\lim_{r\to+\infty}\lim_{k\to+\infty}\max_{\mathfrak a_k+r\leq i\leq \mathfrak b_k-r}\nu_i=0.
$$

For  $t\in[(i-1)L,iL]$, by Theorem \ref{main1}, \eqref{diff.Am} and Lemma \ref{lem:ratio of lambda},
\begin{eqnarray*}
\left|\frac{1}{2\pi}\int_{0}^{2\pi}\frac{\partial u_k}{\partial t}-a_{k,0}'(t_i)\right|&\leq& C\epsilon_k
\sqrt{E(u_k,Q_{i-1}\cup Q_i\cup Q_{i+1})}\\
&\leq& C\epsilon_k(|a_{k,0}'(t_{i-1})|
+|a_{k,0}'(t_{i})|+|a_{k,0}'(t_{i+1})|)\\
&\leq&
C\epsilon_k|a_{k,0}'(t_i)|.
\end{eqnarray*}
Hence
$$
\left|\frac{\dot u_k^*(t)}{a_{k,0}'(t_i)}-1\right|\leq C\epsilon_k.
$$

Next, by Cauchy-Schwarz,
$$
\frac{\int_{Q_i}|\nabla_\theta u_k|}{|a_{k,0}'(t_i)|}\leq
\frac{\sqrt{2\pi L\int_{Q_i}|\nabla_\theta u_k|^2}}{|a_{k,0}'(t_i)|},
$$
which yields the second term in \eqref{eqn:a0E2}.

Finally,
\[
\int_{Q_i} \left| \frac{\partial u_k}{\partial t} - a_{k,0}'(t) \right|^2
= \int_{Q_i}\left( \left| \frac{\partial u_k}{\partial t} \right|^2 - |a_{k,0}'(t)|^2\right)
\le E(u_k,Q_i) - E^*(u_k,Q_i),
\]
and therefore
\[
\int_{Q_i} \left| \frac{\partial u_k}{\partial t} - a_{k,0}'(t) \right|^2
\le4 \nu_i^2 |a_{k,0}'(t_i)|^2.
\]
It follows that
\[
\left|
\int_{Q_i} \left| \frac{\partial u_k}{\partial t} \right|
- 2\pi L |a_{k,0}'(t_i)|
\right|
\le
\int_{Q_i} \left| \frac{\partial u_k}{\partial t} - a_{k,0}'(t_i) \right|
\le
C \nu_i |a_{k,0}'(t_i)|.
\]

Combining this with
\[
\int_{Q_i} \left| \frac{\partial u_k}{\partial t} \right|
\le \int_{Q_i} |\nabla u_k|
\le
\int_{Q_i} \left| \frac{\partial u_k}{\partial t} \right|
+
\int_{Q_i} \left| \frac{\partial u_k}{\partial \theta} \right|,
\]
we obtain the first term in \eqref{eqn:a0E2}. This completes the proof.
\end{proof}

Recall that
\[
\ell(\Gamma_k,[(i-1)L,iL])
= \int_{(i-1)L}^{iL} |\dot u_k^*(t)|\,dt.
\]
By the Cauchy–Schwarz inequality, we obtain
\[
\ell(\Gamma_k,[(i-1)L,iL])
\le C L \sqrt{E(u_k,Q_i)}.
\]
Hence, by \eqref{eqn:trivialends},
\[
\ell(\Gamma_k,[0,\mathfrak a_kL]\cup [\mathfrak b_kL,m_kL])
\le C \left( \sum_{i=1}^{\mathfrak a_k} + \sum_{i=\mathfrak b_k}^{m_k} \right) \sqrt{E(u_k,Q_i)}
\longrightarrow 0.
\]
For each fixed integer \(r>2\), the sum above still tends to zero with \(\mathfrak a_k,\mathfrak b_k\) replaced by \(\mathfrak a_k+r,\mathfrak b_k-r\), since only \(2r\) blocks are added and \(\sup_iE(u_k,Q_i)\to0\).

On the other hand, by the third term in \eqref{eqn:a0E2},
\[
\left|
\frac{\int_{(i-1)L}^{iL} |\dot u_k^*(t)|\,dt}{|a_{k,0}'(t_i)| L}
- 1
\right|
< \delta_r,\quad\mathfrak a_k+r<i<\mathfrak b_k-r.
\]
Summing over $i$ yields
\begin{equation}\label{eqn:length1}
\lim_{k\to+\infty} \ell(\Gamma_k)
=
\lim_{k\to+\infty} \sum_{i=1}^{m_k} |a_{k,0}'(t_i)|\,L.
\end{equation}
Moreover, by Lemma \ref{lem:est.a0}, we also have
\begin{equation}\label{eqn:length2}
\lim_{k\to+\infty} \ell(\Gamma_k)
=\sqrt{\frac{L}{2\pi}}
\lim_{k\to+\infty} \sum_{i=1}^{m_k} \sqrt{E(u_k,Q_i)}.
\end{equation}

\begin{pro}\label{prop 3.2}
Let $u_k$ be as in Theorem \ref{main1}, and assume that
\[
\sup_i E(u_k,Q_i) \to 0.
\]
If the lengths $\ell(\Gamma_k)$ are uniformly bounded, then
\[
\lim_{k\to+\infty} E(u_k, S^1 \times [0,m_kL]) = 0.
\]
\end{pro}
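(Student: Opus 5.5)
We will split the cylinder using the dichotomy of Theorem \ref{main1} (with $m=0$) and control the energy of each piece separately. The idea is that on the ends the energy is small because it is dominated by $\Theta_k$, while in the central region it is controlled by the length of $\Gamma_k$ through \eqref{eqn:length2}, times a factor that goes to zero because each block carries small energy.

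\textbf{The end regions.} Since $\sup_i E(u_k,Q_i)\to0$, we have $\Theta_k\to 0$. In the alternative \eqref{eqn:trivialenergy}, take $\gamma=1$. Then
\[
E(u_k,S^1\times[0,m_kL])=\sum_{i=1}^{m_k}E(u_k,Q_i)\le C\Theta_k\to0,
\]
and we are done. In the alternative \eqref{eqn:trivialends}, again with $\gamma=1$, the ends $[0,\mathfrak a_k]$ and $[\mathfrak b_k,m_k]$ contribute at most $C\Theta_k\to0$. For each fixed $r>2$, the $2r$ blocks adjacent to $\mathfrak a_k$ and $\mathfrak b_k$ contribute at most $2r\sup_iE(u_k,Q_i)\to0$. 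So it remains to control the indices with $\mathfrak a_k+r<i<\mathfrak b_k-r$.

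\textbf{The central region.} For these indices, \eqref{eqn:a0E1} in Lemma \ref{lem:est.a0} gives
\[
E(u_k,Q_i)\le (1+\delta_r)\,2\pi L\,|a_{k,0}'(t_i)|^2 .
\]
We then factor out one copy of $|a_{k,0}'(t_i)|$ and bound it uniformly in $i$:
\[
|a_{k,0}'(t_i)|\le \Bigl(\tfrac{E(u_k,Q_i)}{(1-\delta_r)2\pi L}\Bigr)^{1/2}\le C\bigl(\sup_jE(u_k,Q_j)\bigr)^{1/2}=:\eta_k\to0 .
\]
This gives
\[
\sum_{\mathfrak a_k+r<i<\mathfrak b_k-r}E(u_k,Q_i)\le C L\,\eta_k\sum_{i}|a_{k,0}'(t_i)|.
\]

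\textbf{Comparison with the length.} By the third term of \eqref{eqn:a0E2}, for $k$ large,
\[
|a_{k,0}'(t_i)|\,L\le (1-\delta_r)^{-1}\ell\bigl(\Gamma_k,[(i-1)L,iL]\bigr).
\]
Summing over the central indices, $\sum_i |a_{k,0}'(t_i)|\le C\,\ell(\Gamma_k)/L$, which is uniformly bounded by hypothesis. Hence the central energy is at most $C\eta_k\sup_k\ell(\Gamma_k)\to0$. Equivalently, one can combine \eqref{eqn:length2} with the pointwise bound $E\le \sqrt{\sup_jE}\,\sqrt{E}$. Adding the three pieces gives the limit $0$; because the parameter $r$ is fixed before $k\to\infty$, no double limit is needed.

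\textbf{Main obstacle.} The main obstacle is bookkeeping rather than analysis. Lemma \ref{lem:est.a0} is stated only for $\mathfrak a_k+r<i<\mathfrak b_k-r$ and for large $k$ depending on $r$. Passing to a subsequence in Theorem \ref{main1} is harmless, since the argument applies to every subsequence and therefore gives convergence of the full sequence. The essential input is only the elementary inequality $\sum E_i\le (\max_i\sqrt{E_i})\sum\sqrt{E_i}$, together with the identification of $\sum\sqrt{E_i}$ with the length.
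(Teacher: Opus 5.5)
Your proposal is correct and is essentially the paper's argument: the paper invokes \eqref{eqn:length2} to conclude that $\sum_i\sqrt{E(u_k,Q_i)}$ stays bounded, and then applies $\sum_i E(u_k,Q_i)\le(\sup_i E(u_k,Q_i))^{1/2}\sum_i\sqrt{E(u_k,Q_i)}$, which is exactly the inequality you identify as the essential input. Your splitting into the ends, the $2r$ transition blocks and the central region simply re-derives, block by block, the one-sided half of \eqref{eqn:length2} that is needed; for this you should fix $r$ large enough that $\delta_r<1$.
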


\begin{proof}
By \eqref{eqn:length2}, bounded length implies that \(\sum_i\sqrt{E(u_k,Q_i)}\) is bounded. Hence
\[
\sum_i E(u_k,Q_i)\leq
\left(\sup_i E(u_k,Q_i)\right)^{1/2}\sum_i\sqrt{E(u_k,Q_i)}\to0.
\]
\end{proof}

Next, we study the convergence of the curves $\Gamma_k$ under the following assumption: 
\begin{equation}\label{eqn:Fk}
\int_{Q_i}|F_k(t,\theta,u_k,\nabla u_k)|\leq CE(u_k,Q_{i-1}\cup Q_i\cup Q_{i+1}).
\end{equation}
We will show that, after passing to a subsequence, $\gamma_k$ converges to a limiting curve $\gamma$ in $C^1_{\mathrm{loc}}$. 

For convenience, denote
$$
\Delta u_k=F_k(x,u_k,\nabla u_k),\s \dot{u}_k^* = \frac{\partial u_k^*}{\partial t}\s \text{and}\s \ddot{u}_k^* = \frac{\partial^2 u_k^*}{\partial t^2}.
$$
First, observe that
\begin{equation*}
\begin{split}
\ddot{u}_k^*(t) &= \vint_{S^1} \frac{\partial^2 u_k}{\partial t^2}(t, \theta) \, d\theta
=\vint_{S^1} \left(\frac{\partial^2 u_k}{\partial t^2}(t, \theta)+ \frac{\partial^2 u_k}{\partial \theta^2}(t, \theta) \right)\,d\theta\\
&=\vint_{S^1} \Delta u_k(t, \theta) \, d\theta
=\vint_{S^1} F_k(x,u_k,\nabla u_k)\, d\theta.
\end{split}
\end{equation*}

We reparameterize the curve $\gamma_k$ by its arc length
\[
s(t) = \int_{c_k}^{t} \left| \dot{u}_k^*(\tau) \right| d\tau,
\]
where $\mathfrak a_k<c_k<\mathfrak b_k$. Note that $\frac{dt}{ds}=\frac{1}{|\dot{u}_k^*|}$, and thus the chain rule gives
\begin{equation}\label{eq:second order for uk*}
\begin{split}
\frac{d^2 u_k^*}{ds^2}
&= \frac{1}{|\dot{u}_k^*|}\frac{d}{dt}\left(\frac{1}{|\dot{u}_k^*|}\frac{du_k^*}{dt}\right)\\
&= \frac{\ddot{u}_k^*}{|\dot{u}_k^*|^2}-\frac{ \langle \ddot{u}_k^*,\ \dot{u}_k^* \rangle }{ |\dot u_k^*|^4 } \dot{u}_k^* \\
&=\frac{1}{ |\dot{u}_k^*|^2} \vint_{S^1} F_k\, d\theta- \frac{\dot{u}_k^*}{|\dot u_k^*|^4} \left\langle \vint_{S^1} F_k\,d\theta,\ \dot{u}_k^* \right\rangle.
				\end{split}
\end{equation}

Choose $c_k'$ such that 
$$
\int_{c_k}^{c_k'}|\dot u_k^*|\in (s_0,2s_0).
$$
Then
\begin{eqnarray*}
\int_{0}^{s_0}\left|\frac{d^2u_k^*}{ds^2}\right|ds
&\leq&\int_{c_k}^{c_k'}\int_0^{2\pi}\left|\frac{1}{ |\dot{u}_k^*|^2}  F_k\, - \frac{\dot{u}_k^*}{|\dot u_k^*|^4} \langle  F_k,\ \dot{u}_k^*\rangle\right||\dot u_k^*|dtd\theta\\
&=&\int_{Q_{c_k,c_k'}}O\left(\frac{|F_k|}{|\dot u_k^*|}\right)dtd\theta.
\end{eqnarray*}
Applying Lemma \ref{lem:est.a0}, \eqref{eqn:Fk} and Lemma \ref{lem:ratio of lambda}, we obtain
\begin{eqnarray*}
\int_{Q_{c_k,c_k'}}\frac{|F_k|}{|\dot u_k^*|}dtd\theta
&\leq &C\sum_{i=c_k}^{c_k'}\int_{Q_i}\frac{|F_k|}{|\dot u_k^*|}dtd\theta\\
&\leq& C
\sum_{i=c_k}^{c_k'}\int_{Q_i}\frac{|F_k|}{|a_{k,0}'(t_i)|}dtd\theta\\
&\leq&
C\sum_{i=c_k}^{c_k'}\frac{1}{|a_{k,0}'(t_i)|}\int_{Q_{i-1}\cup Q_i\cup Q_{i+1}}|\nabla u_k|^2dtd\theta\\
&\leq& C\sum_{i=c_k}^{c_k'}\frac{1}{|a_{k,0}'(t_i)|}(|a_{k,0}'(t_{i-1})|^2+|a_{k,0}'(t_{i})|^2+|a_{k,0}'(t_{i+1})|^2)\\
&\leq&C\sum_{i=c_k}^{c_k'}(|a_{k,0}'(t_{i-1})|+|a_{k,0}'(t_{i})|+|a_{k,0}'(t_{i+1})|)\\
&\leq &Cs_0.
\end{eqnarray*}
Therefore, $\|u_k^*\|_{W^{2,1}(0,s_0)}$ is uniformly bounded. In particular, the second variation of $\gamma_k$ (viewed as a varifold) is locally bounded. It follows that, after passing to a subsequence,
\[
(\gamma_k,\gamma_k(c_k)) \to (\gamma,P) \quad \text{in } C^1_{\mathrm{loc}},
\]
where $P$ is the limit of $u_k^*(c_k)$.\\

Next, we assume that
$$
F_k=F_{0,k}+G_k,
$$
where 
$$
\int_{Q_i}|G_k|\leq C\epsilon_kE(u_k,Q_{i-1}\cup Q_i\cup Q_{i+1}),
$$		
and
$$F_{0,k}(p,X,Y)=\alpha_k^{ij}(p)X_iY_j.$$
We assume that $F_{0,k}$ is continuous and converges
uniformly to $F_0$.

For any $\varphi\in\mathcal{D}(0,s_0)$, we have
\begin{eqnarray*}
\int_0^{s_0}\varphi(s)\frac{d^2u_k^*}{ds^2}&=&
\int_{c_k}^{c_k'}\varphi(s(t))\left(\frac{1}{|\dot u_k^*|}\vint_{S^1}  F_k\, d\theta- \frac{\dot{u}_k^*}{|\dot u_k^*|^3} \left\langle \vint_{S^1} F_k\,d\theta,\ \dot{u}_k^* \right\rangle\right)dt\\
&=&\int_0^{s_0}\varphi \left(F_{0,k}(u_k^*)(\frac{du_k^*}{ds})(\frac{du_k^*}{ds})-\left\langle F_{0,k}(u_k^*)(\frac{du_k^*}{ds})(\frac{du_k^*}{ds}),\frac{du_k^*}{ds}\right\rangle\frac{du_k^*}{ds}\right)\\
&&+\int_{Q_{c_k,c_k'}}O(\|F_{0,k}(u_k(t,\cdot))-F_{0,k}(u_k^*)\|_{L^\infty(S^1)})\frac{|\nabla u_k|^2}{|\dot u_k^*|}\\
&&+\int_{Q_{c_k,c_k'}}\frac{1}{|\dot u_k^*|}\left(O(|\nabla u_k||\nabla_\theta u_k|)+O(|\nabla u_k||\frac{\partial u_k}{\partial t}-\dot u_k^*|)\right)\\
&&+\int_{Q_{c_k,c_k'}}\frac{|G_k|}{|\dot u_k^*|}.
\end{eqnarray*}

Using the same estimates as before, we obtain
$$
\int_{Q_{c_k,c_k'}}\frac{|\nabla u_k|^2}{|\dot u_k^*|}=O(1)s_0,\s \int_{Q_{c_k,c_k'}}\frac{|G_k|}{|\dot u_k^*|}=o(1)s_0,
$$
and
$$
\int_{Q_{c_k,c_k'}}\frac{|\nabla u_k||\nabla_\theta u_k|}{|\dot u_k^*|}=o(1)s_0,\s
\int_{Q_{c_k,c_k'}}\frac{|\nabla u_k||\frac{\partial u_k}{\partial t}-\dot u_k^*|}{|\dot u_k^*|}=o(1)s_0.
$$
Passing to the limit and using the uniform convergence of $F_{0,k}$, we obtain
$$
-\int_0^{s_0}\varphi'\frac{d\gamma}{ds}=\int_0^{s_0}\varphi \left(F_0(\gamma)(\frac{d\gamma}{ds})(\frac{d\gamma}{ds})-\left\langle F_0(\gamma)(\frac{d\gamma}{ds})(\frac{d\gamma}{ds}),\frac{d\gamma}{ds}\right\rangle\frac{d\gamma}{ds}\right).
$$
This completes the proof of Theorem \ref{main2}.

\section{Applications: energy identities for geometric PDEs}\label{sec:examples}

\subsection{Rivi\`ere's equation}
Let $u_k \in W^{1,2}(D_1, \mathbb{R}^n)$ be weak solutions of 
 $$\Delta u_k=\Omega_k \cdot\nabla u_k,\qquad\Omega_k\in L^2(D_1,so_n\otimes \mathbb{R}^2)$$
 satisfying
 $$\int_{D_1}(|\nabla u_k|^2+|\Omega_k|^2)\,dx\leq \Lambda.$$
The angular energy identity for sequences of such solutions was established by Laurain and Rivi\`ere \cite{LaurainRiviere2014} utilizing a Lorentz duality argument. We prove angular energy vanishing on the necks using Corollary \(\ref{intr.cor}\), and hence the 3-circle lemma. Throughout this subsection, we take $m=0$ and fix $p\in(1,2)$.

By the bubble extraction and neck decomposition described in Laurain–Rivi\`ere \cite[Section 3.1]{LaurainRiviere2014}, it suffices to prove angular energy vanishing on each of the finitely many neck annuli. After translation and rescaling, we write the neck annulus as 
\[
D_1\setminus D_{r_k},
\qquad r_k\to 0.
\]

Set $T_k=\log(1/r_k)$ and define
\[
v_k(t,\theta)
=
u_k\!\left(e^{-t},\theta\right).
\]
Then \(v_k\) satisfies
$\Delta v_k=\widetilde\Omega_k\cdot\nabla v_k$ on $Q_{0,T_k}$,
and conformal invariance gives
\[
\int_{Q_{0,T_k}}
\bigl(|\nabla v_k|^2+|\widetilde\Omega_k|^2\bigr)
\,dtd\theta
\le \Lambda.
\]

We first verify Definition~\ref{defi of almost hm} on subcylinders with small potential. Suppose that \(0<a<b<T_k\) and
\[
\|\widetilde\Omega_k\|_{L^2(Q_{a,b})}
\le \epsilon\le \epsilon_*,
\]
where $\epsilon_*>0$ is sufficiently small. For every three-block subcylinder \(Q_{s,s+3L}\subset Q_{a,b}\), the local estimate of Sharp–Topping \cite{SharpTopping2013} gives, for $0<r<r_0$,
\[
\begin{aligned}
\|\nabla^2v_k\|_{L^p(Q_{s+r,s+3L-r})}
&\le \tau(r)\|\nabla v_k\|_{L^2(Q_{s,s+3L})},\\
\|\Delta v_k\|_{L^1(Q_{s+r,s+3L-r})}
&\le \tau(r)\epsilon\|\nabla v_k\|_{L^2(Q_{s,s+3L})}.
\end{aligned}
\]

\textbf{Claim:} for every \(\eta>0\), there exist \(\epsilon_\eta\in(0,\epsilon_*]\) and \(H_\eta\ge3L\), independent of \(k,a,b\), such that
\begin{equation}\label{eq:riviere-uniform-core-estimate}
(E-E^*)(v_k,Q_{a+H_\eta,b-H_\eta})
\le \eta E(v_k,Q_{a,b}).
\end{equation}
whenever \(0<a<b<T_k\) and
\(
\|\widetilde\Omega_k\|_{L^2(Q_{a,b})}
\le\epsilon_\eta.
\)

Suppose that this fails for some fixed \(\eta>0\). Set
\(\epsilon_j=\min\{\epsilon_*,j^{-1}\}\). For every integer \(j\ge3\), there exist \(k_j\) and \(0<a_j<b_j<T_{k_j}\) such that
\(
\|\widetilde\Omega_{k_j}\|_{L^2(Q_{a_j,b_j})}
\le\epsilon_j
\)
and
\[
(E-E^*)(v_{k_j},Q_{a_j+jL,b_j-jL})
>\eta E(v_{k_j},Q_{a_j,b_j}).
\]
Necessarily, $\ell_j:=b_j-a_j>2jL$ and
$E(v_{k_j},Q_{a_j,b_j})>0$. Define
\[
z_j(t,\theta)
=
\frac{v_{k_j}(a_j+t,\theta)}
{\sqrt{E(v_{k_j},Q_{a_j,b_j})}}
\quad\text{on }Q_{0,\ell_j}.
\]
The preceding local estimates show that $z_j$ is
$(\tau,\epsilon_j)$-almost harmonic. Moreover,
\[
E(z_j,Q_{0,\ell_j})=1,
\qquad
(E-E^*)(z_j,Q_{jL,\ell_j-jL})>\eta.
\]
Set $m_j=\lfloor\ell_j/L\rfloor$.
Since $\epsilon_j\to0$ and the restrictions of $z_j$ to
$Q_{0,m_jL}$ have uniformly bounded energy,
Corollary \ref{intr.cor} gives
\[
\lim_{H\to\infty}\lim_{j\to\infty}
(E-E^*)(z_j,Q_{H,m_jL-H})=0.
\]
On the other hand, for every fixed $H>0$ and all sufficiently large $j$,
the inequalities $\ell_j-m_jL<L$ and $jL\ge H+L$ imply
\[
Q_{jL,\ell_j-jL}\subset Q_{H,m_jL-H},
\]
and hence
\[
(E-E^*)(z_j,Q_{H,m_jL-H})
\ge(E-E^*)(z_j,Q_{jL,\ell_j-jL})>\eta.
\]
This contradicts the preceding limit and proves the uniform estimate.

For \(T>0\), define
\[
h_k(T)
=
\sup_{T\leq s\leq T_k-T-L}
E(v_k,Q_{s,s+L}).
\]
The preliminary neck nonconcentration property, obtained from
local compactness and the energy gap, gives
\begin{equation}\label{eq:riviere-neck-nonconcentration}
\lim_{T\to\infty}\lim_{k\to\infty}h_k(T)=0,
\end{equation}
see \cite[Section~3.1, (61)--(62)]{LaurainRiviere2014}.

Fix \(\eta>0\) and \(T>0\). For all sufficiently large \(k\), we divide \(Q_{T,T_k-T}\) into \(J_k\) subcylinders \(Q_{a_{k,j},b_{k,j}}\) with disjoint interiors such that
\[
\int_{Q_{a_{k,j},b_{k,j}}}|\widetilde\Omega_k|^2\,dtd\theta
\le \epsilon_\eta^2,
\qquad
J_k\le 1+\frac{\Lambda}{\epsilon_\eta^2}.
\]
Hence, by \eqref{eq:riviere-uniform-core-estimate},
\[
\sum_{j=1}^{J_k}
(E-E^*)(v_k,Q_{a_{k,j}+H_\eta,b_{k,j}-H_\eta})
\le
\eta\sum_{j=1}^{J_k}E(v_k,Q_{a_{k,j},b_{k,j}})
\le \eta\Lambda.
\]
The remaining part of \(Q_{T,T_k-T}\) can be covered by at most \(C_{\eta,\Lambda}\) subcylinders \(Q_{s,s+L}\subset Q_{T,T_k-T}\). Combining the preceding estimate with \(0\le E-E^*\le E\), we obtain
\[
(E-E^*)(v_k,Q_{T,T_k-T})
\le \eta\Lambda+C_{\eta,\Lambda}h_k(T).
\]
Since $m=0$, we have
\[
\int_{Q_{T,T_k-T}}|\partial_\theta v_k|^2\,dt\,d\theta
\le (E-E^*)(v_k,Q_{T,T_k-T})
\le \eta\Lambda+C_{\eta,\Lambda}h_k(T).
\]
Using \eqref{eq:riviere-neck-nonconcentration} and taking first
$k\to\infty$, then $T\to\infty$, and finally
$\eta\to 0$, we obtain
\[
\lim_{T\to\infty}\lim_{k\to\infty}
\int_{Q_{T,T_k-T}}|\partial_\theta v_k|^2\,dtd\theta
=0.
\]
Taking $T=\log R$ and returning to annular coordinates, we have
\[
\int_{Q_{\log R,T_k-\log R}}
|\partial_\theta v_k|^2\,dtd\theta
=
\int_{D_{1/R}\setminus D_{Rr_k}}
|\nabla_\theta u_k|^2\,dx,
\]
where $\nabla_\theta u_k=\frac{1}{r}\partial_\theta u_k$. Consequently,
\begin{equation*}
\lim_{R\to\infty}\lim_{k\to\infty}
\int_{D_{1/R}\setminus D_{Rr_k}}
|\nabla_\theta u_k|^2\,dx
=0.
\end{equation*}
Thus no angular energy is lost along the neck, which proves the angular energy identity for Rivi\`ere's equation.

\subsection{Conformal immersion with small Willmore energy}Let $f_k : S^1 \times [0,m_kL] \to \mathbb{R}^n$ be a sequence of conformal immersions with $W(f_k) < \tau_0$, where $\tau_0$ is chosen such that
\[
\|\nabla f_k\|_{W^{1,p}(B_{r/2}(p))}
\le C \|\nabla f_k\|_{L^2(B_r(p))}
\]
(see \cite{ChenLi2014} for details). Here, conformality means
\[
|f_{k,t}|^2 = |f_{k,\theta}|^2.
\]

Assume that
\[
\Theta_k := \sup_i \|\nabla f_k\|_{L^2(Q_i)} \to 0.
\]
Then $f_k$ satisfies the assumptions of Theorem \ref{main1} with $m=0$.

Since $f_k$ is conformal, we have
\[
 \frac{\int_{Q_i}|f_{k,\theta}|^2}{\int_{Q_i}|\nabla f_k|^2}
= \frac{1}{2}.
\]
If \eqref{eqn:trivialenergy} does not occur, then by \eqref{eqn:a0E1} we must have
\[
 \frac{\int_{Q_i}|f_{k,\theta}|^2}{\int_{Q_i}|\nabla f_k|^2} \to 0
\qquad \text{for } \mathfrak a_k \le i \le \mathfrak b_k,
\]
which is a contradiction.

Therefore, \eqref{eqn:trivialenergy} must hold. In particular,
\[
E(f_k,Q_i)
\le C \Theta_k \big( e^{-iqL} + e^{-(m_k-i)qL} \big),
\]
which implies
\[
\lim_{k\to\infty} E(f_k) = 0,
\qquad
\lim_{k\to\infty} \operatorname{osc}_{S^1 \times [0,m_kL]} f_k = 0.
\]
This result was originally obtained in \cite{ChenLi2014} by an ODE argument. Here we provide an alternative proof based on the 3-circle analysis developed above.
\subsection{Harmonic mapping on long cylinders without weak conformality}
Let $u_k:(\Sigma,h_k)\rightarrow N$ be a sequence of harmonic maps, where $h_k$ is a fixed-area metric on $\Sigma$ with curvature $0$ or $-1$. When the corresponding complex structures of $h_k$ degenerate, the sequence $\{u_k\}$ may develop blow-up phenomena. 
In the degeneration process, certain collars of $\Sigma$ elongate and become long cylindrical regions. Analytically, this leads to the study of harmonic maps from \emph{long cylinders} into $N$ without weak weak conformality.

Now, we assume $u_k:S^1\times[0,m_kL]\rightarrow N$ is harmonic with
$$
\sup_i E(u_k,Q_i)\rightarrow 0.
$$
When the target manifold $N$ is isometrically embedded in $\mathbb{R}^n$, the harmonic map equation can be expressed using the \emph{second fundamental form} $A$ of $N$:
\[
\Delta u_k + A(u_k)(du_k, du_k) = 0.
\]
Parker \cite{Parker1996} first systematically studied such sequences and analyzed the associated blow-up phenomena. Chen and Tian \cite{ChenTian1999} later proved that if the sequence does not converge to a point, the image set in the blow-up limit converges to a geodesic in the target manifold $N$. Li and Wang \cite{LiWang2010} provided an alternative proof of this geodesic convergence. Theorem \ref{main2} gives a new proof.

Miaomiao Zhu \cite{Zhu2010} reformulated the Pohozaev identity as follows:
\[
\nu(u)=\int_{\{t\}\times S^1} \left(|u_t|^2 - |u_\theta|^2\right)\, d\theta = \text{constant in } t.
\] 
He use this constant to compute both the limiting energy and the limiting length of neck regions, i.e.
$$
\lim_{k\rightarrow+\infty}E(u_k)=2\pi\nu(u_k) m_kL,\s \lim_{k\rightarrow+\infty}\ell(u_k^*)=\sqrt{\nu(u_k)}m_kL,
$$
which is a direct corollary of Theorem \ref{main2}.

\subsection{$\alpha$-harmonic maps}

Let $(\Sigma,g)$ be a closed Riemann surface and let $N$ be a compact
Riemannian manifold isometrically embedded in $\mathbb R^K$. For
$\alpha>1$, the Sacks--Uhlenbeck functional is
\[
E_\alpha(u;\Omega)
=\int_\Omega (1+|\nabla_g u|^2)^\alpha\,dV_g.
\]
An $\alpha$-harmonic map satisfies
\begin{equation}\label{eq:alpha-map}
\Delta_g u+A(u)(\nabla_g u,\nabla_g u)
=-(\alpha-1)
\frac{\nabla_g|\nabla_g u|^2\cdot\nabla_g u}
     {1+|\nabla_g u|^2},
\end{equation}
where $A$ denotes the second fundamental form, with the sign
convention in \eqref{eq:alpha-map}. Moreover, we have the Pohozaev-type inequality:
\begin{eqnarray*}
 \left(1-\frac{1}{2\alpha}\right)\int_{\partial D_r(x)}(1+|\nabla_{g} u|^2)^{\alpha-1}
    \big|\frac{\partial u}{\partial r}\big|^2ds-\frac{1}{2\alpha}\int_{\partial
D_r(x)}(1
    +|\nabla_{g} u|^2)^{\alpha-1}\dis\big|u_{\theta}\big|^2ds\\
=\frac{(\alpha-1)}{\alpha r}\int_{D_r(x)}(1+|\nabla_{g}u|^2
    )^{\alpha-1}|\nabla_gu|^2dV_g
    +O(r).
\end{eqnarray*}
Choose an isothermal coordinate disk 
and write $g=e^{2\varphi}g_{\mathrm{euc}}$. Then
\begin{equation}\label{eq:alpha-map-euclidean}
\Delta u+A(u)(\nabla u,\nabla u)
=-(\alpha-1)
\frac{\nabla\!\left(e^{-2\varphi}|\nabla u|^2\right)\cdot\nabla u}
     {1+e^{-2\varphi}|\nabla u|^2}.
\end{equation}

We now consider a sequence of $\alpha_k$-harmonic maps
\[
u_k\colon (\Sigma,g)\to N,
\qquad
\alpha_k\to1,
\]
satisfying
\[
E_{\alpha_k}(u_k)\leq \Lambda.
\]

Assume, for simplicity, that $0$ is the only concentration point in a
fixed isothermal coordinate disk and there is only one bubble. For bubble tree cases,
we refer the reader to \cite{LiWang2010,ChenLi2014a} for details. 

Write
\[
g=e^{2\varphi}g_{\mathrm{euc}}
\]
in this disk. Then \eqref{eq:alpha-map} takes the form
\begin{equation}\label{eq:alpha-map-euclidean}
\Delta u_k+A(u_k)(\nabla u_k,\nabla u_k)
=
-(\alpha_k-1)
\frac{
\nabla\!\left(e^{-2\varphi}|\nabla u_k|^2\right)
\cdot\nabla u_k
}{
1+e^{-2\varphi}|\nabla u_k|^2
}.
\end{equation}

The neck analysis can be reduced to the study of the neck region $D_{\delta}\setminus D_{r_k/\delta}(x_k)$, where                     
$$
x_k\rightarrow 0,\s r_k\rightarrow 0,
$$
and $u_k(x_k+r_kx)$ converges to a bubble, which is a nontrivial harmonic map
from $S^2$ into $N$. 
Since there exists only one bubble, we have 
$$
\lim_{\delta\rightarrow 0}\lim_{k\rightarrow+\infty}\sup_{r\in[2r_k/\delta, \delta/2]}E(u_k,D_{2r}\setminus D_{r/2}(x_k))=0.
$$

Set $t=-\log r$, $x=x_k+(-\log r,\theta)$. 
Then \eqref{eq:alpha-map-euclidean} becomes
\begin{equation}\label{eq:alpha-map-euclidean2}
\Delta u_k+A(u_k)(\nabla u_k,\nabla u_k)
=-(\alpha_k-1)
\frac{\nabla\!\left(e^{-2\hat\varphi}|\nabla u_k|^2\right)\cdot\nabla u_k}
     {1+e^{-2\hat\varphi}|\nabla u_k|^2}
\end{equation}
on $[-\log\delta,-\log \frac{r_k}{\delta}]\times S^1$,
where
$$
\hat{\varphi}=\varphi\left(
x_k+e^{-t}(\cos\theta,\sin\theta)
\right)-t.
$$

By a result in
\cite{LiWang2010}, we have
$$
\|\nabla_gu_k\|_{C^0}^{\alpha_k-1}\leq C.
$$
Then the Pohozaev-type inequality implies that
\begin{eqnarray*}
\alpha_k-1&\leq& Cr\left(1-\frac{1}{2\alpha_k}\right)\int_{\partial D_r(x_k)}(1+|\nabla_{g} u_k|^2)^{\alpha_k-1}
    \big|\frac{\partial u_k}{\partial r}(x_k)\big|^2\,ds\\
&\leq& Cr^2\|\nabla u_k\|^2_{L^\infty(\partial D_r(x_k))}\\
&\leq& C\int_{D_{2r}\setminus D_{r/2}(x_k)}|\nabla u_k|^2\,dx\\
&=&C\int_{S^1\times(-\log r-\log 2,-\log r+\log 2)}|\nabla u_k|^2\,dtd\theta    
\end{eqnarray*}

Consequently, the right-hand side of
\eqref{eq:alpha-map-euclidean2} satisfies
\[
\left|
(\alpha_k-1)
\frac{
\nabla\!\left(
e^{-2\hat\varphi}|\nabla u_k|^2
\right)\cdot\nabla u_k
}{
1+e^{-2\hat\varphi}|\nabla u_k|^2
}
\right|
\leq
C(\alpha_k-1)
\left(
|\nabla^2u_k|+|\nabla u_k|
\right).
\]
By the $\epsilon$-regularity for $\alpha$-harmonic maps, $u_k$
is therefore an almost-harmonic perturbation as
$\alpha_k\rightarrow1$. We may consequently apply the
three-circle estimates established above.

First, we are able to obtain that
\begin{eqnarray*}
\lim_{\delta\rightarrow 0}\lim_{k\rightarrow+\infty}
\int_{D_{\delta}\setminus D_{r_k/\delta}(x_k)}|\nabla u_k|^2&\leq &
\lim_{\delta\rightarrow 0}\lim_{k\rightarrow+\infty}
\int_{D_{\delta}\setminus D_{r_k/\delta}(x_k)}(1+|\nabla_gu_k|^2)^{\alpha_k-1}|\frac{\partial u_k}{\partial r}|^2\\
&\leq&2\left(\lim_{k\rightarrow+\infty}\log r_k^{1-\alpha_k}\right)\lim_{\delta\rightarrow 0}
\int_{D_{\delta}(x_k)}(1+|\nabla_{g}u_k|^2
    )^{\alpha_k-1}|\nabla_gu_k|^2dV_g.
\end{eqnarray*}
This  implies a weak energy identity obtained in \cite{LiWang2010}. Moreover, $u_k$
has no energy loss on this neck region if and only if 
$$
\lim_{k\rightarrow+\infty}r_k^{1-\alpha_k}=1.
$$

In addition, the image of the neck region converges to a geodesic.
Let $\Gamma_k$ be the curve defined by
\[
u_k^*(r)
=
\frac{1}{2\pi r}
\int_{\partial D_r(x_k)}u_k\,ds.
\]
Although $\Gamma_k$ may not lie in $N$, since
$u_k-u_k^*\rightarrow 0$, the limit of $u_k^*$ must lie in $N$. By the Cauchy–Schwarz inequality and Lemma \ref{lem:est.a0}, together with the Pohozaev-type inequality and Corollary \ref{intr.cor}, we obtain
\begin{eqnarray*}
\lim_{\delta\rightarrow 0}\lim_{k\rightarrow+\infty}
\ell(\Gamma_k)
&=&
\lim_{\delta\rightarrow 0}\lim_{k\rightarrow+\infty}
\int_{r_k/\delta}^{\delta}|\dot u_k^*|\,dr\\
&\sim&
\lim_{\delta\rightarrow 0}\lim_{k\rightarrow+\infty}
\int_{r_k/\delta}^{\delta}
\sqrt{\int_{S^1}
\left|\frac{\partial u_k}{\partial r}\right|^2
(re^{i\theta})\,d\theta}\,dr\\
&\sim&
\lim_{\delta\rightarrow 0}\lim_{k\rightarrow+\infty}
\int_{r_k/\delta}^{\delta}
\sqrt{\int_{S^1}
(1+|\nabla u_k|^2)^{\alpha_k-1}
\left|\frac{\partial u_k}{\partial r}\right|^2
(re^{i\theta})\,d\theta}\,dr\\
&=&
\lim_{\delta\rightarrow 0}\lim_{k\rightarrow+\infty}
\int_{r_k/\delta}^{\delta}
\frac{
\sqrt{(\alpha_k-1)
\int_{D_{r}(x_k)}
(1+|\nabla_{g}u_k|^2)^{\alpha_k-1}
|\nabla_gu_k|^2dV_g}}{r}\,dr.
\end{eqnarray*}
Here, $x\sim y$ means that
\[
C^{-1}y\leq x\leq Cy
\]
for some constant $C>0$ independent of $k$.

Then, the limit has infinite length  if and only if
$$
\lim_{k\rightarrow+\infty}\sqrt{\alpha_k-1}\log\frac{1}{r_k}=+\infty.
$$
If it has finite length, then $-\sqrt{\alpha_k-1}\log r_k$
is bounded, which implies that $-(\alpha_k-1)\log r_k\rightarrow 0$, and
for any $s_k\in(2r_k/\delta,\delta/2)$, we have
\begin{eqnarray*}
 \|\nabla u_k\|_{L^\infty(\partial D_{s_k}(x_k))}^{2(\alpha_k-1)}&\leq&
\left(C\frac{E(u_k,D_{2s_k}\setminus D_{s_k/2}(x_k))}{s_k^2}\right)^{\alpha_k-1}\\
&\leq& \left(\frac{C}{r_k}\right)^{2\alpha_k-2}=1+o(1),\qquad k\to \infty,   
\end{eqnarray*}
which implies that
$$
(1+|\nabla u_k|^2)^{\alpha_k-1}(x_k+re^{i\theta})\rightarrow 1.
$$
Then
$$
\lim_{\delta\rightarrow 0}\lim_{k\rightarrow+\infty}\ell(\Gamma_k)=\left(\lim_{k\rightarrow+\infty}\log r_k^{-\sqrt{\alpha_k-1}}\right)
\lim_{\delta\rightarrow 0}\lim_{k\rightarrow+\infty}\sqrt{\int_{D_{\delta}(x_k)}(1+|\nabla_{g}u_k|^2
    )^{\alpha_k}\,dV_g}.
$$
Hence, we reprove results in \cite{LiWang2010}.\\

When the target manifold $(M,h)$ admits a Killing vector field $X \in \Gamma(TM)$, the energy functional is invariant under the one-parameter family of isometries generated by $X$. By Noether's theorem, this symmetry implies that the quantity
\[
\omega = (1+|\nabla u_k|^2)^{\alpha_k-1} \langle \nabla_g u_k, X(u_k) \rangle_h
\]
is divergence-free, i.e.
\[
\operatorname{div}_g \omega = 0.
\]
Then $\int_{\partial D_r}\omega=0$.

Assume that $u_k(c_k) \to p \in M$, and that $\gamma_k$ converges to a geodesic $\gamma$. Then
\[
\int_{S^1\times\{t_i\}}
\frac{\omega(u_k)}{\mathcal A_i(u_k)}
\;\longrightarrow\;
C\, \langle \dot\gamma, X(p) \rangle,
\]
where $C>0$ is a constant.

As a consequence, we obtain the following:
\begin{pro}
We have
\[
\dot\gamma(s)\in
\left(
\operatorname{span}\{X(\gamma(s)): X \text{ is a Killing vector field on } M\}
\right)^\perp .
\]
In particular, if the values of the Killing vector fields span
$T_xM$ at every point $x\in M$, then $\gamma$ is constant. Hence the
sequence has no nontrivial neck.
\end{pro}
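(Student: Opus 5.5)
The plan is to combine the divergence-free identity $\int_{\partial D_r}\omega=0$ with the normalized asymptotics of Theorem \ref{main1} (case $m=0$) on the neck region. In cylindrical coordinates, the flux of $\omega$ through each circle $\{t\}\times S^1$ vanishes for every $t$. In particular, for each central index $i$,
\[
0=\int_{\{t_i\}\times S^1}(1+|\nabla u_k|^2)^{\alpha_k-1}\langle \partial_t u_k, X(u_k)\rangle\,d\theta .
\]
Dividing by $\mathcal A_i(u_k)$ and passing to the limit gives $C\langle\dot\gamma,X(p)\rangle=0$ with $C>0$, where $p$ is the limit point. Since $c_k$ can be chosen at any position along the curve (any arclength parameter $s\in(-a,b)$ is realized as the limit of $u_k^*(c_k')$ for a suitable sequence $c_k'$ with $c_k'-\mathfrak a_k,\ \mathfrak b_k-c_k'\to\infty$), this yields $\langle\dot\gamma(s),X(\gamma(s))\rangle=0$ for every Killing field $X$ and every $s$.

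The substantive step is justifying the limit displayed just before the proposition. Set $v_k=(u_k(s_k+t,\theta)-c_k)/\sqrt{E(u_k,Q_{t_k})}$ with $m=0$. By Theorem \ref{main1}, $v_k\rightharpoonup At$ in $W^{2,p}_{loc}$, hence strongly in $W^{1,2}$ and in traces on circles. Three pieces of the integrand must then be controlled. First, $(1+|\nabla u_k|^2)^{\alpha_k-1}\to1$ uniformly on the neck: this follows from $\|\nabla_g u_k\|_{C^0}^{\alpha_k-1}\le C$ together with smallness of the neck energy and $\epsilon$-regularity, exactly as in the finite-length discussion. In the infinite-length case I would instead factor this weight out as a bounded multiplier. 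Since it is nearly constant on each circle, up to an error controlled by the oscillation, it cannot alter the sign conclusion. Second, $X(u_k)\to X(p)$ uniformly on the circle, because the oscillation of $u_k$ on the circle tends to $0$ and $u_k(s_k)\to p$. Third, $\partial_t u_k/\sqrt{E(u_k,Q_{t_k})}$ converges in $L^2(S^1)$ to the constant vector $A$. Moreover $\mathcal A_i(u_k)$ is comparable to $\sqrt{E(u_k,Q_i)}$ by $\pi L\lambda_i^2/2\to1$, and by Lemma \ref{est.ahat} we have $\mathcal A_i=|\hat a_0(t_i)|=2|a_{k,0}'(t_i)|$. Consequently $A/|A|$ equals the limit of $\dot u_k^*/|\dot u_k^*|$, which is $\dot\gamma$ by Lemma \ref{lem:est.a0} and the $C^1_{loc}$ convergence of Theorem \ref{main2} (or of the geodesic convergence statement). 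Putting these together, the limit is $2\pi|A|^{-1}\cdots\langle\dot\gamma,X(p)\rangle$ with a positive constant.

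The main obstacle is ensuring that the geometry is reflected correctly. The flux must be taken with respect to $\langle\cdot,\cdot\rangle_h$ on $TM$, while $u_k$ lives in $\mathbb R^K$. Here $\langle \partial_t u_k, X(u_k)\rangle_h$ coincides with the Euclidean inner product, since $X$ is tangent, so no extra terms appear. One must also check that the error terms, namely the $\theta$-derivative part and $\partial_t u_k-\dot u_k^*$, are $o(\mathcal A_i)$. This follows from $\nu_i\to0$ in \eqref{eq:limit of lambda_i,nu_i} and the Cauchy--Schwarz bounds in Lemma \ref{lem:est.a0}.

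For the final claim: if Killing fields span $T_xM$ everywhere, then $\dot\gamma\equiv0$, which contradicts $|\dot\gamma|=1$ for an arclength-parametrized limit of positive length. Hence the lengths $\ell(\Gamma_k)$ tend to $0$. Proposition \ref{prop 3.2} then gives vanishing neck energy, and hence there is no nontrivial neck.
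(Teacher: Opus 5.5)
Your proposal is correct and follows the same route as the paper: the Noether current $\omega$ has zero flux through every circle, and after normalizing by $\mathcal A_i(u_k)$ the limit is a positive multiple of $\langle\dot\gamma,X(p)\rangle$. You justify this limit in more detail than the paper does, via the normalized convergence $v_k\rightharpoonup At$; you then move the base point to reach every $s\in(-a,b)$, and use Theorem \ref{main2} and Proposition \ref{prop 3.2} for the no-neck conclusion.

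As a minor simplification, you do not need the weight $w_k=(1+|\nabla_g u_k|^2)^{\alpha_k-1}$ to be nearly constant on circles. Since $1\le w_k\le C$, $\partial_t v_k\to A$ in $L^1(S^1)$ and $X(u_k)\to X(p)$ uniformly, one has $\int_{S^1}w_k\langle\partial_t v_k,X(u_k)\rangle\,d\theta=\langle A,X(p)\rangle\int_{S^1}w_k\,d\theta+o(1)$ with $\int_{S^1}w_k\,d\theta\ge 2\pi$, which already forces $\langle A,X(p)\rangle=0$.
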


This recovers a result proved in \cite{BayerRoberts2026}.

\begin{exa}
Let
\[
N=M\times T^m
\]
be equipped with the product metric
\[
h=h_M+h_{T^m},
\]
where $T^m$ is a flat torus. Let
\[
X_1,\ldots,X_m
\]
be the parallel Killing vector fields on $T^m$, viewed as Killing
vector fields on $N$. At every point of $N$, these vector fields span
the tangent space of the $T^m$-factor.
Suppose that the image of a neck region converges to a geodesic,
then must have
\[
\gamma\subset M\times\{q\}
\]
for some $q\in T^m$.
Thus the existence of Killing vector fields does not necessarily force
the limiting neck geodesic to be constant, but it may restrict the
geodesic to a lower-dimensional totally geodesic submanifold.
\end{exa}

\subsection{Energy identity of Willmore energy}
By the arguments in the appendix of \cite{LiYinZhou2024}, the study of bubble tree convergence for a sequence of closed Willmore surfaces with bounded $\|A\|_{L^2}$ reduces to the study of a sequence of conformal Willmore immersions
\[
f_k\colon[0,m_kL]\times S^1\to \mathbb{R}^n,
\]
with induced metric $g_k=f_k^*(g_{\mathbb{R}^n})=e^{2u_k}(dt^2+d\theta^2)$, satisfying:
\begin{itemize}
\item[A1)]  $u_k=-mt+v_k$ for some $m\in \mathbb Z_+$ independent of $k$, with
$$
\|v_k(0,\theta)\|_{L^{\infty}(S^1)}\le 1, \s\|\nabla v_k\|_{L^\infty([0,m_kL]\times S^1)}\rightarrow 0,
$$
\item[A2)]
$$\Theta_k=\sup_{t\in [0,m_kL-1]}\int_{[t,t+1]\times S^1}|A_k|^2\,{d}V_{g_k}\rightarrow 0,
$$
\item[A3)] $\int_0^{2\pi}f_k(m_kL-1,\theta)\,d\theta =0$.
\end{itemize}
We first show that the mean curvature $H$ of $f$ satisfies \eqref{almost.harmonic.1} and \eqref{almost.harmonic.2}.

Let $\nu_1$, $\cdots$, $\nu_{n-2}$ be an orthonormal basis of the normal bundle. Then
the Gauss map $\vec{n}$ can be written as
$$
\vec{n}=\nu_1\wedge \cdots\wedge \nu_{n-2}.
$$
By $\epsilon$-regularity (cf \cite{KuwertSchatzle2001},\cite{Riviere2008}), we have
$$
|\nabla \vec{n}|\rightarrow 0.
$$
We have the following Willmore Euler-Lagrange equation in divergence form of Rivi\`ere \cite{Riviere2008}
\begin{equation*}
\Delta {H}=\operatorname{div}\left( 3 \pi_{{n}}(\nabla {H}) - \star(\nabla^\bot {n} \wedge {H})\right),
\end{equation*}
or equivalently,
\begin{equation*}
\Delta {H} = \operatorname{div}\left(\frac{3}{2}\pi_T(\nabla {H}) + \star\frac{1}{2}(\nabla^\bot {n} \wedge {H})\right)
\end{equation*}
By the first equation, we obtain
$$
\Delta H=3\pi_n(\Delta H)+
3(\nabla\pi_n)(\nabla H)-\star(\nabla^\bot n\wedge\nabla H).
$$
We rewrite this as
$$
\Delta H=3\pi_n(\Delta H)+\hat\Omega_i H_i,
$$
which implies
$$
\pi_T(\Delta H)=\pi_T(\hat\Omega^iH_i).
$$
Therefore,
\begin{eqnarray*}    
\Delta H&=&\operatorname{div}\left(\frac{3}{2}\pi_T(\nabla H)+\star\frac{1}{2}(\nabla^\bot n\wedge H)\right)\\
&=&\frac{3}{2}\pi_T(\Delta H)+\frac{3}{2}(\nabla\pi_T)(\nabla H)+\star\frac{1}{2}(\nabla^\bot n\wedge\nabla H)\\
&=&\frac{3}{2}\pi_T(\hat\Omega^iH_i)+\frac{3}{2}(\nabla\pi_T)(\nabla H)+\star\frac{1}{2}(\nabla^\bot n\wedge\nabla H)\\
&:=&\Omega^iH_i.
\end{eqnarray*}

On $Q_i$, define 
$$
\hat f_k=e^{-u_k(t_i,0)}(f_k(t+t_i,\theta)-\vint_0^{2\pi}f_k(t_i,\theta)\,d\theta).
$$
Then, after choosing a suitable orthonormal basis $\{\delta_i\}$ of $\mathbb{R}^n$, we have smooth convergence
of $\hat f_k$ to
$$
\frac{1}{m}e^{-mt}(\cos m\theta,\sin m\theta,0,\cdots,0)=\frac{1}{m}e^{-mt}(\cos m\theta \delta_1+
\sin m\theta \delta_2)
$$
(see \cite{LiYin2025}). Set $\hat u_k(t,\theta)=u_k(t_i+t,\theta)-u_k(t_i,0)$.
Let 
$$
e_1=e^{-\hat u_k}\hat f_{k,1},\s e_2=e^{-\hat u_k}\hat f_{k,2}.
$$
Applying the Gram--Schmidt process, we construct an orthonormal basis $\{\hat\nu_\alpha\}$ of the normal bundle as follows:
\begin{eqnarray*}
 \hat{\nu}_1&=&\frac{\delta_3-\lan e_1,\delta_3\ran e_1-\lan e_2,\delta_3\ran e_2}{|\delta_3-\lan e_1,\delta_3\ran e_1-\lan e_2,\delta_3\ran e_2|},\\
 \hat{\nu}_2&=&\frac{\delta_4-\lan e_1,\delta_4\ran e_1-\lan e_2,\delta_4\ran e_2-\lan \hat{\nu}_1,\delta_4\ran \hat\nu_1 }{|\delta_4-\lan e_1,\delta_4\ran e_1-\lan e_2,\delta_4\ran e_2-\lan \hat{\nu}_1,\delta_4\ran\hat\nu_1|},\\
 \cdots&=&\cdots\\
 \hat{\nu}_{n-2}&=&\frac{\delta_n-\lan e_1,\delta_n\ran e_1-\lan e_2,\delta_n\ran e_2-\lan \hat{\nu}_1,\delta_n\ran \hat{\nu}_1 -\cdots-\lan\hat{\nu}_{n-3},\delta_n\ran\hat{\nu}_{n-3}}{|\delta_n-\lan e_1,\delta_n\ran e_1-\lan e_2,\delta_n\ran e_2-\lan \hat{\nu}_1,\delta_n\ran\hat{\nu}_1-\cdots-\lan\hat{\nu}_{n-3},\delta_n\ran\hat{\nu}_{n-3}|}.
\end{eqnarray*}
Then
$$
\|\nabla\hat{\nu}_\alpha\|_{C^0([-L/2,L/2]\times S^1)}\rightarrow 0,\quad \alpha=1,\cdots,n-2.
$$
Setting $\nu_\alpha(t,\theta)=\hat{\nu}_\alpha(t-t_i,\theta)$, we obtain an orthonormal basis of the normal bundle  on $Q_i$. We have
$$
\pi_n(\nabla H)=\sum_\alpha\lan\nabla H,\nu_\alpha\ran \nu_\alpha,
$$
and therefore
$$
\|\nabla\pi_n\|_{C^0(Q_i)}\leq C\sum_\alpha\|\nabla\nu_\alpha\|_{C^0(Q_i)}\rightarrow 0.
$$
Moreover, since
$$
\pi_T(\cdot)=\lan\cdot,f_i\ran g^{ij}f_j=Id-\pi_n(\cdot),
$$
we also have
$$
\|\nabla\pi_T\|_{C^0(Q_i)}\leq C\sum_\alpha\|\nabla \nu_\alpha\|_{C^0(Q_i)}\rightarrow 0.
$$
Consequently,
$$
\|\Omega^j(f_k)\|_{C^0(Q_i)}<\epsilon_k\rightarrow 0,\qquad j=1,2,\quad2\leq i\leq m_k-1.
$$

Let $\tau_1$ and $\tau_2$ denote the residues associated with the conservation laws corresponding to translations and rotations of $\mathbb{R}^n$, respectively.
By the arguments in \cite{LiYin2025}, if $\tau_1=0$ and $\tau_2=0$, then $\hat a_m=\hat b_m=0$. In this case, the 3-circle lemma applies and yields
\begin{equation}\label{eq:nabla Hk for 3 circle}
   \int_{Q_i}e^{-2mt}|\nabla H_k|^2\leq C\Theta_k(e^{-iqL}+e^{-(m_k-i)qL}). 
\end{equation}
Moreover, the trace embedding inequality, A1)–A2) and \eqref{eq:nabla Hk for 3 circle} give
\[
\int_{S^1}|H_k(L/2,\theta)|^2\,d\theta
\le C\int_{Q_1}\bigl(|H_k|^2+|\nabla H_k|^2\bigr)\,dtd\theta
\le C\Theta_k.
\]
By \eqref{eq:nabla Hk for 3 circle}, for \(L/2\le t\le(m_k-1)L\), we have
\[
\begin{aligned}
\|H_k(t,\cdot)\|_{L^2(S^1)}&\leq \|H_k(L/2,\cdot)\|_{L^2(S^1)}+\int_{L/2}^t\|\partial_\tau H_k(\tau,\cdot)\|_{L^2(S^1)}\,d\tau\\
&\le
\|H_k(L/2,\cdot)\|_{L^2(S^1)}
+
\sum_{i=1}^{\lceil t/L\rceil}
\left(\int_{(i-1)L}^{iL}e^{2m\tau}\,d\tau\right)^{1/2}
\left(\int_{Q_i}e^{-2m\tau}|\nabla H_k|^2\,d\tau d\theta\right)^{1/2}
\\
&\le
C\sqrt{\Theta_k}
\left[
1+\sum_{i=1}^{\lceil t/L\rceil}
\left(
e^{(m-q/2)iL}
+
e^{-qm_kL/2}e^{(m+q/2)iL}
\right)
\right]
\\
&\le
C\sqrt{\Theta_k}
\left(
1+e^{(m-q/2)t}
+e^{-qm_kL/2}e^{(m+q/2)t}
\right).
\end{aligned}
\]
Thus,
\[
\int_{S^1}|H_k(t,\theta)|^2\,d\theta
\le
C\Theta_k
\left(
1+e^{(2m-q)t}
+e^{(2m+q)t-qm_kL}
\right).
\]
Hence,
\[
\begin{aligned}
\int_{S^1\times[L,(m_k-1)L]}e^{-2mt}|H_k|^2\,dtd\theta
&\le
C\Theta_k\int_L^{(m_k-1)L}
\left(
e^{-2mt}+e^{-qt}+e^{-q(m_kL-t)}
\right)\,dt
\\
&\le C\Theta_k.
\end{aligned}
\]

When $\tau_2\neq 0$,  a similar argument gives
$$
\int_{Q_{0,\mathfrak a_k}}|H_k|^2+\int_{Q_{\mathfrak b_k,m_k}}|H_k|^2\leq C\Theta_k.
$$
Moreover, we have the following:

\begin{lem}
For any $c_k$ such that $c_k-\mathfrak a_k \to +\infty$ and $\mathfrak b_k-c_k \to +\infty$
$$
\frac{\int_{Q_{c_k}}e^{-2mt}|\nabla H_k|^2}{\int_{Q_{c_k}}e^{-2mt}|H_k|^2}\rightarrow 2m^2.
$$
\end{lem}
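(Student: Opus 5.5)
We apply the blow-up description at the end of Theorem~\ref{main1} to the almost harmonic function $H_k$, with weight $m$. First we have to make sure we are in the nontrivial alternative. When $\tau_2\neq0$, the critical mode of $H_k$ does not vanish: $\hat a_m,\hat b_m$ are essentially the rotational residue. So \eqref{eqn:trivialenergy} fails and we get indices $\mathfrak a_k<\mathfrak b_k$ with \eqref{eq:limit of lambda_i,nu_i}. Take $t_k=c_k$, $s_k=(c_k-\frac12)L$, and the constants $c_k$ (the averages) from the theorem. Then
\[
w_k(t,\theta)=\frac{e^{-ms_k}\bigl(H_k(s_k+t,\theta)-c_k\bigr)}{\sqrt{E(H_k,Q_{c_k})}}\rightharpoonup w:=e^{mt}(A\cos m\theta+B\sin m\theta)
\]
in $W^{2,p}_{loc}$. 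This convergence is strong in $W^{1,2}_{loc}$, and also in $L^2_{loc}$.

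The key step is to show that the additive constants $c_k$ are negligible. We need
\[
\frac{e^{-ms_k}|c_k|}{\sqrt{E(H_k,Q_{c_k})}}\to0,
\]
which is the same as saying that $\int_{Q_{c_k}}e^{-2mt}|H_k|^2$ is asymptotic to $E(H_k,Q_{c_k})\cdot\|w\|^2_{L^2(\widetilde Q_0)}$ after rescaling. The average of $H_k$ over a circle is controlled through the zero mode $a_0(t)$. The $p=0$ component of the energy is part of $E-E^*$, and $\nu_i\to0$ in the central region, so $a_0'$ is small relative to $\mathcal A_i$. We integrate from the left end $\mathfrak a_k$, where $\int e^{-2mt}|H_k|^2\le C\Theta_k$ (the estimate just stated). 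Then $|a_0(t)|$ grows at most like $\sqrt{\Theta_k}$ plus $\int\sqrt{(E-E^*)}\,e^{mt}$. Meanwhile $e^{mt}\mathcal A_i$ grows like $e^{mt}$, since $\mathcal A_i$ is essentially constant by Lemma~\ref{lem:ratio of lambda}. Hence $e^{-ms_k}|a_0(s_k)|\big/\sqrt{E(H_k,Q_{c_k})}\to0$, using $c_k-\mathfrak a_k\to\infty$ (for $m>0$) and $\nu_i\to 0$. I expect this to be the main obstacle: one must check that the weighted growth indeed makes the zero mode relatively small, and uniform control of $\nu_i$ over a range of $i$ is needed. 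So I would first try a direct summation using $\nu_i\le\delta_r$ together with the geometric growth of $e^{mt_i}\mathcal A_i$.

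Granting this, $e^{-ms_k}H_k(s_k+\cdot)/\sqrt{E(H_k,Q_{c_k})}\to w$ strongly in $W^{1,2}(\widetilde Q_0)$. The ratio in the lemma is invariant under scaling, and the shift by $s_k$ only multiplies both numerator and denominator by the same factor $e^{-2ms_k}$. Therefore the ratio converges to
\[
\frac{\int_{\widetilde Q_0}e^{-2mt}|\nabla w|^2}{\int_{\widetilde Q_0}e^{-2mt}|w|^2}.
\]
Finally we compute this explicitly. We have $e^{-2mt}|w|^2=|A\cos m\theta+B\sin m\theta|^2$, whose $\theta$-integral is $\pi(|A|^2+|B|^2)$. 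Next, $|\nabla w|^2=m^2e^{2mt}\bigl(|A\cos m\theta+B\sin m\theta|^2+|{-A\sin m\theta}+B\cos m\theta|^2\bigr)$, which gives $2\pi m^2(|A|^2+|B|^2)$ after integrating in $\theta$ with the weight. The ratio is therefore $2m^2$; it is well defined because $(A,B)\neq0$.
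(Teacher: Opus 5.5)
The paper gives no proof of this lemma. Your route is the natural one: apply the profile statement of Theorem~\ref{main1} to $H_k$ with $t_k=c_k$, then compute explicitly on $e^{mt}(A\cos m\theta+B\sin m\theta)$. The value $2m^2$ is computed correctly, and you are right that everything hinges on the additive constant. Write $\kappa_k$ for $e^{-ms_k}$ times the average of $H_k$ over $Q_{c_k}$, divided by $\sqrt{E(H_k,Q_{c_k})}$. The quotient tends to $2m^2$ if and only if $\kappa_k\to0$, since a limit $\kappa\neq0$ adds $2\pi|\kappa|^2\int_{-L/2}^{L/2}e^{-2mt}\,dt$ to the normalized denominator.

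The central-region part of your argument for $\kappa_k\to0$ is fine. Since $m\neq0$, we have $2\pi\int_{(i-1)L}^{iL}e^{-2mt}|a_0'|^2\,dt\le(E-E^*)(H_k,Q_i)=\nu_i^2\mathcal A_i^2$. Cauchy--Schwarz, Lemma~\ref{lem:ratio of lambda}, $\nu_i\le\delta_r$ and the weights $e^{-m(c_k-i)L}$ then show that the growth of $a_0$ across the central blocks is $o(e^{ms_k}\mathcal A_{c_k})$.

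The gap is the size of $a_0$ at the entrance $t_0\approx\mathfrak a_kL$ of the central region. The only zeroth-order information there is $\int_{Q_{0,\mathfrak a_k}}e^{-2mt}|H_k|^2\le C\Theta_k$. This gives $|a_0(t_0)|\lesssim e^{m\mathfrak a_kL}\sqrt{\Theta_k}$, not $\sqrt{\Theta_k}$, so you would need
\[
e^{-m(c_k-\mathfrak a_k)L}\sqrt{\Theta_k}=o\bigl(\sqrt{E(H_k,Q_{c_k})}\bigr).
\]
Nothing you invoke yields this. Theorem~\ref{thm:long.cylinder} bounds the energy of the left region only from above. So the energy reaching the central region can be smaller than $\Theta_k$ by a factor $e^{-C\mathfrak a_kL}$, which $e^{-2m(c_k-\mathfrak a_k)L}$ does not compensate when $\mathfrak a_k$ is large compared with $c_k-\mathfrak a_k$. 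Lemma~\ref{lem:ratio of lambda} (your ``$\mathcal A_i$ is essentially constant'') only compares neighbouring blocks; it never compares $\mathcal A_{c_k}$ with $\sqrt{\Theta_k}$.

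These ingredients cannot suffice. Take $N_k\to\infty$, $\Theta_k\to0$, orthonormal $e_1,e_2,e_3\in\mathbb R^n$, and consider on $S^1\times[0,3N_kL]$ the harmonic functions
\[
H_k=\sqrt{\Theta_k/N_k}\;e_3+\sqrt{\Theta_k}\,e^{-(m+1)t}\cos\bigl((m+1)\theta\bigr)\,e_2+\sqrt{\Theta_k}\,e^{-(2m+1)N_kL}e^{mt}\cos(m\theta)\,e_1 .
\]
They have every property your argument uses, including the left-end bound (weighted or unweighted), and $\mathfrak a_k=N_k+O(1)$. Take $c_k=2N_k$. The constant contributes an amount of order $N_k^{-1}\Theta_ke^{-4mN_kL}$ to $\int_{Q_{c_k}}e^{-2mt}|H_k|^2$. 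The critical mode contributes only $\pi L\,\Theta_ke^{-(4m+2)N_kL}$. So the quotient in the lemma tends to $0$, not $2m^2$.

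Hence the step you flag as the main obstacle cannot be closed by summation along the central region. It needs an additional, Willmore-specific input controlling the circle averages of $H_k$ relative to the critical-mode amplitude. For instance, it would suffice to have the displayed estimate, or a bound $|a_0|=o(e^{ms_k}\mathcal A_{c_k})$ at some point of the central region far to the left of $c_k$.

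A minor remark: you do not need $\tau_2\neq0$ to exclude \eqref{eqn:trivialenergy}. The existence of $c_k$ with $c_k-\mathfrak a_k\to\infty$ and $\mathfrak b_k-c_k\to\infty$ already gives $\mathfrak b_k-\mathfrak a_k\to\infty$. Also, since the limit $2m^2$ does not depend on $A,B$, a sub-subsequence argument upgrades the subsequential profile convergence to convergence of the whole quotient.
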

Then, it is easy to use $\tau_2$ to write down the limits of $\int_{Q_{0,m_kL}}|H_k|^2dV_{g_k}$
as in \cite{LiYinZhou2024}.

\section{Three-Circle Lemma for Inhomogeneous Perturbations}\label{sec:3-circle lemma to inho}
In this section, we study a more general class of perturbations of the harmonic map equation, where both nonlinear first-order terms and lower-order forcing terms are present.
More precisely, we consider $u\colon S^1\times[0,m_kL]\rightarrow\R^n$ which satisfies the following:
\begin{align*}
\|\nabla^2 u\|_{L^p(S^1\times[s+r,t-r])} 
&\leq \tau(r)\,(\|\nabla u\|_{L^2(S^1\times [s,t])}+\|f\|_{L^p(S^1\times[s,t])}), \\ 
\|\Delta u\|_{L^1(S^1\times[s+r,t-r])} 
&\leq \tau(r)\,(\epsilon\,\|\nabla u\|_{L^2(S^1\times[s,t])}+\|f\|_{L^1(S^1\times[s,t])}),
\end{align*}
where  $p>1$. 

We assume $\sup_iE(u_k,Q_i)\rightarrow 0$, and 
$$
\|f_k\|_{L^p(Q_i)}\leq e^{-iq_1L}A_k,\quad \text{where}~A_k\rightarrow 0.
$$

Let $1<i<m_k$. We say $Q_i$ is dominant if
$$
\frac{\|f_k\|_{L^p(Q_{i-1}\cup Q_i\cup Q_{i+1})}}{E^\frac{3}{2}(u_k,Q_i)}\geq 1,
$$
and  nondominant otherwise. It is easy to check that, if $Q_i$ is nondominant, then
$u_k$ satisfies
\eqref{almost.harmonic.1} and \eqref{almost.harmonic.2}
on $Q_{i-1}\cup Q_i\cup Q_{i+1}$.
\begin{lem}
Let
\[
I=\bigcup_{i=a}^b Q_i.
\]
Assume that $Q_a$ and $Q_b$ are dominant, while $Q_i$ is nondominant
for every $a<i<b$. Then there exists $q'>0$ such that
\[
E(u_k,Q_i)\leq C e^{-iq'L}A_k^{2/3}
\]
for every $a\leq i\leq b$.
\end{lem}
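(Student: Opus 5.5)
The argument has two parts. First, the two dominant endpoints give a small energy bound from the forcing term. Then the nondominant interior is treated with the long-cylinder machinery of Section~\ref{sec:3-circle Lemma for almost hm}, and the resulting decay is matched to the weight $e^{-iq'L}$.

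\textbf{Step 1: endpoint bound.} Since $Q_a$ is dominant,
\[
E^{3/2}(u_k,Q_a)\le \|f_k\|_{L^p(Q_{a-1}\cup Q_a\cup Q_{a+1})}\le 3e^{-(a-1)q_1L}A_k .
\]
Hence $E(u_k,Q_a)\le Ce^{-\frac23 aq_1L}A_k^{2/3}$, and the same holds at $Q_b$.

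\textbf{Step 2: the interior is almost harmonic.} For $a<i<b$ the block $Q_i$ is nondominant, so
\[
\|f_k\|_{L^p(Q_{i-1}\cup Q_i\cup Q_{i+1})}<E^{3/2}(u_k,Q_i)\le \big(\sup_j E(u_k,Q_j)\big)^{1/2}E(u_k,Q_i).
\]
Since $\sup_jE(u_k,Q_j)\to0$, the forcing term is absorbed into $\tau\,\epsilon_k\|\nabla u_k\|_{L^2}$. Thus $u_k$ is $(\tau,\epsilon_k')$-almost harmonic on every triple $Q_{i-1}\cup Q_i\cup Q_{i+1}$ with $a<i<b$, where $\epsilon_k'\to0$. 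This is the remark made just before the lemma.

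\textbf{Step 3: long-cylinder decomposition.} We apply Theorem~\ref{thm:long.cylinder} on $\bigcup_{i=a}^bQ_i$ with $m=0$, which is the unweighted setting assumed here. This gives indices $\mathfrak a,\mathfrak b$. On $[a,\mathfrak a]$ we get $E(u_k,Q_i)\le Ce^{-(i-a)q''L}E(u_k,Q_a)$, and symmetrically on $[\mathfrak b,b]$ from the other end. In the central region $[\mathfrak a,\mathfrak b]$, Lemma~\ref{lem:ratio of lambda} and \eqref{Ai} show that $\mathcal A_i$ is essentially constant, so $E(u_k,Q_i)$ is comparable across the region. Its size is set by $E(u_k,Q_{\mathfrak a})$, which is already exponentially controlled by $E(u_k,Q_a)$.

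\textbf{Step 4: combining the weights.} We need $e^{-(i-a)q''L}e^{-\frac23aq_1L}\le e^{-iq'L}$, which holds for $q'\le\min\{q'',\tfrac23 q_1\}$. On the right end we have $e^{-(b-i)q''L}e^{-\frac23bq_1L}\le e^{-iq'L}$ trivially, since $b\ge i$.

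\textbf{Main obstacle: a long central plateau.} In the central region the energy does not decay, so the bound must come from elsewhere. When $m=0$, a nonzero mode $\hat a_0$ that persists over a long stretch forces $\Delta u$ to be small, because $\hat a_0'$ is controlled by $\int|\Delta u|$ via \eqref{diff.Am}. The mode must then still be present at $Q_b$.

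I would first try to rule out a long plateau by contradiction at the dominant endpoint $Q_b$. There $E(u_k,Q_b)^{3/2}\le Ce^{-bq_1L}A_k$ is tiny. Near-constancy of $\mathcal A_i$ up to $\mathfrak b$, together with the transition estimate used in Case~1 of the proof of Theorem~\ref{main1}, should give
\[
E(u_k,Q_i)\le C\,E(u_k,Q_{\mathfrak b})\le C\,E(u_k,Q_b)
\]
across the whole plateau. Taking the minimum of the two endpoint bounds then yields the $e^{-iq'L}$ decay, possibly after shrinking $q'$.

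If the plateau cannot be excluded in this way, I would instead bound its energy by $C\min\{E(u_k,Q_a),E(u_k,Q_b)\}$ and weaken $q'$ accordingly.
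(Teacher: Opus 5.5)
There is a genuine gap, and it sits exactly where you flag the main obstacle: the central region $[\mathfrak a,\mathfrak b]$. Your Steps 1--4 only cover the two end regions.

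In Step 3 you control the plateau by $E(u_k,Q_{\mathfrak a})$, that is, from the left endpoint. This gives at best a bound of order $e^{-\frac23 q_1 aL}A_k^{2/3}$, which does not decay like $e^{-iq'L}$ when $i\gg a$. Your proposed repair, $E(u_k,Q_i)\le C\,E(u_k,Q_{\mathfrak b})$ with $C$ uniform over the whole plateau, is not available. Lemma~\ref{lem:ratio of lambda} only gives $\mathcal A_j\le(1+\epsilon')\mathcal A_{j+1}$ per step. Iterating over $\mathfrak b-i$ steps produces the factor $(1+\epsilon')^{\mathfrak b-i}$, which is unbounded because the plateau length is not bounded, even though $\epsilon'\to0$.

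The Case~1 transition estimate from the proof of Theorem~\ref{main1} does not help either. It controls only the single step from $Q_{\mathfrak a_k}$ to $Q_{\mathfrak a_k+1}$, and then uses $\mathfrak b_k-\mathfrak a_k\le\Lambda$, which is precisely the hypothesis you lack. Nor can a long plateau be ruled out by contradiction: nothing prevents the critical mode from dominating on a long stretch with tiny energy. Your fallback, bounding the plateau by $C\min\{E(u_k,Q_a),E(u_k,Q_b)\}$, rests on the same unjustified uniform constant.

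The missing idea is that you do not need a uniform constant. The dominant right endpoint supplies decay $e^{-\frac23 q_1 bL}$ at a fixed rate, and this beats the slow drift of $\mathcal A_j$. Propagate backward from $Q_b$. For $\mathfrak a\le i\le\mathfrak b$, use $\lambda_i\ge\beta_0$, the iterated ratio bound, Lemma~\ref{est.ahat} (which gives $\mathcal A_{\mathfrak b}^2\le CE(u_k,Q_{\mathfrak b})$), and the right-end decay:
\[
E(u_k,Q_i)\le\beta_0^{-2}\mathcal A_i^2\le C(1+\epsilon')^{2(\mathfrak b-i)}E(u_k,Q_{\mathfrak b})\le C(1+\epsilon')^{2(\mathfrak b-i)}e^{-(b-\mathfrak b)qL}e^{-\frac23 q_1 bL}A_k^{2/3}.
\]
Write $(1+\epsilon')^{2(\mathfrak b-i)}=e^{\delta(\mathfrak b-i)L}$ with $\delta=2\log(1+\epsilon')/L$, and split $b=i+(\mathfrak b-i)+(b-\mathfrak b)$. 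The exponent, divided by $L$, becomes
\[
-\tfrac23 q_1 i-\bigl(\tfrac23 q_1-\delta\bigr)(\mathfrak b-i)-\bigl(q+\tfrac23 q_1\bigr)(b-\mathfrak b).
\]
The almost-harmonic parameter tends to $0$ with $k$, so you may take $\epsilon'$ small enough that $\delta<\frac23 q_1$. Then $E(u_k,Q_i)\le Ce^{-\frac23 q_1 iL}A_k^{2/3}$ on the plateau. Combined with your end-region estimates, this gives the lemma with $q'<\min\{q,\frac23 q_1\}$.
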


\begin{proof}
Since $Q_i$ is nondominant for every $a<i<b$, the forcing term is
controlled by
\[
\|f_k\|_{L^p(Q_{i-1}\cup Q_i\cup Q_{i+1})}
<
E(u_k,Q_i)^{3/2}.
\]
Hence, when the energy is sufficiently small, $u_k$ satisfies
\eqref{almost.harmonic.1} and \eqref{almost.harmonic.2} on the relevant
three-cylinder regions.

Then we can find $\mathfrak a$ and $\mathfrak b$ such that, for
$a\leq i\leq\mathfrak a$ and $\mathfrak b\leq i'\leq b$,
\[
E(u_k,Q_i)
\leq
C e^{-(i-a)qL}E(u_k,Q_a),
\]
and
\[
E(u_k,Q_{i'})
\leq
C e^{-(b-i')qL}E(u_k,Q_b),
\]
while
\[
\lambda_j\geq\beta_0
\qquad
\text{for every } \mathfrak a\leq j\leq\mathfrak b.
\]

Hence, Lemma~\ref{lem:ratio of lambda} can be applied successively to
the pairs $(Q_j,Q_{j+1})$, $j=i,\ldots,\mathfrak b-1$. After choosing
the almost-harmonic parameter sufficiently small, we obtain
\[
\mathcal A_j(u_k)
\leq
(1+\epsilon')\mathcal A_{j+1}(u_k).
\]
Iterating this inequality gives
\[
\mathcal A_i(u_k)
\leq
(1+\epsilon')^{\mathfrak b-i}
\mathcal A_{\mathfrak b}(u_k).
\]

Moreover, since $\lambda_i\geq\beta_0$, the definition of $\lambda_i$
gives
\[
E(u_k,Q_i)
\leq
\beta_0^{-2}\mathcal A_i^2(u_k),
\]
while Lemma~\ref{est.ahat} yields
\[
\mathcal A_{\mathfrak b}^2(u_k)
\leq
C E(u_k,Q_{\mathfrak b}).
\]
Therefore, for every $\mathfrak a\leq i\leq\mathfrak b$,
\[
\begin{aligned}
E(u_k,Q_i)
&\leq
C(1+\epsilon')^{2(\mathfrak b-i)}
E(u_k,Q_{\mathfrak b})\\
&\leq
C(1+\epsilon')^{2(\mathfrak b-i)}
e^{-(b-\mathfrak b)qL}E(u_k,Q_b).
\end{aligned}
\]

Since $Q_b$ is dominant,
\[
E(u_k,Q_b)^{3/2}
\leq
\|f_k\|_{L^p(Q_{b-1}\cup Q_b\cup Q_{b+1})}
\leq
C e^{-q_1bL}A_k,
\]
and hence
\[
E(u_k,Q_b)
\leq
C e^{-\frac{2q_1bL}{3}}A_k^{2/3}.
\]
Thus
\[
E(u_k,Q_i)
\leq
C(1+\epsilon')^{2(\mathfrak b-i)}
e^{-(b-\mathfrak b)qL}
e^{-\frac{2q_1bL}{3}}
A_k^{2/3}.
\]
Write
\[
(1+\epsilon')^{2(\mathfrak b-i)}
=
e^{\delta_{\epsilon'}(\mathfrak b-i)L},
\qquad
\delta_{\epsilon'}
=
\frac{2\log(1+\epsilon')}{L}.
\]
Since
\[
b=i+(\mathfrak b-i)+(b-\mathfrak b),
\]
we obtain
\[
\begin{aligned}
&\delta_{\epsilon'}(\mathfrak b-i)
-q(b-\mathfrak b)
-\frac{2q_1}{3}b\\
&\qquad
=
-\frac{2q_1}{3}i
-\left(\frac{2q_1}{3}-\delta_{\epsilon'}\right)
(\mathfrak b-i)
-\left(q+\frac{2q_1}{3}\right)(b-\mathfrak b).
\end{aligned}
\]
Choosing $\epsilon'>0$ so that
\(
\delta_{\epsilon'}<2q_1/3,
\)
we conclude that
\[
E(u_k,Q_i)
\leq
C e^{-\frac{2q_1}{3}iL}A_k^{2/3},\qquad\mathfrak a\leq i\leq\mathfrak b.
\]

On the other hand, since $Q_a$ is dominant,
\[
E(u_k,Q_a)
\leq
C e^{-\frac{2q_1aL}{3}}A_k^{2/3}.
\]
Hence, for $a\leq i\leq\mathfrak a$,
\[
\begin{aligned}
E(u_k,Q_i)
&\leq
C e^{-(i-a)qL}E(u_k,Q_a)\\
&\leq
C e^{-(i-a)qL}
e^{-\frac{2q_1aL}{3}}A_k^{2/3}\\
&\leq
C e^{-q'iL}A_k^{2/3},
\end{aligned}
\]
provided
\[
q'<\min\left\{q,\frac{2q_1}{3}\right\}.
\]

Similarly, for $\mathfrak b\leq i\leq b$,
\[
\begin{aligned}
E(u_k,Q_i)
&\leq
C e^{-(b-i)qL}E(u_k,Q_b)\\
&\leq
C e^{-(b-i)qL}
e^{-\frac{2q_1bL}{3}}A_k^{2/3}\\
&\leq
C e^{-q'iL}A_k^{2/3}.
\end{aligned}
\]
This completes the proof.
\end{proof}

Similarly, we have the following lemma.
\begin{lem}
Let
\[
I=\bigcup_{i=2}^b Q_i.
\]
Assume that $Q_b$ is dominant, while $Q_i$ is nondominant for every
$2\leq i<b$. Then there exists $q'>0$ such that
\[
E(u_k,Q_i)
\leq
C e^{-iq'L}
\left(A_k^{2/3}+E(u_k,Q_2)\right)
\]
for every $2\leq i\leq b$.
\end{lem}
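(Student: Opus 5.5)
We follow the proof of the preceding lemma, with the dominant block $Q_a$ on the left replaced by the boundary block $Q_2$. The only difference is the left-end estimate: there we used dominance to bound $E(u_k,Q_a)$ by $A_k^{2/3}$, while here $E(u_k,Q_2)$ has no forcing bound and simply appears on the right-hand side.

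Since $Q_i$ is nondominant for $2\le i<b$, we have $\|f_k\|_{L^p(Q_{i-1}\cup Q_i\cup Q_{i+1})}<E(u_k,Q_i)^{3/2}$. Because $\sup_iE(u_k,Q_i)\to0$, the forcing terms are then absorbed, so $u_k$ satisfies \eqref{almost.harmonic.1}--\eqref{almost.harmonic.2}, with small $\epsilon$, on the relevant three-block regions. We then apply Theorem~\ref{thm:long.cylinder} to the chain $Q_2,\dots,Q_b$. This gives indices $\mathfrak a\le\mathfrak b$ with the following properties:
\[
E(u_k,Q_i)\le Ce^{-(i-2)qL}E(u_k,Q_2)\quad (2\le i\le\mathfrak a),\qquad E(u_k,Q_{i})\le Ce^{-(b-i)qL}E(u_k,Q_b)\quad(\mathfrak b\le i\le b),
\]
and $\lambda_j\ge\beta_0$ on $[\mathfrak a,\mathfrak b]$. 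Since $e^{-(i-2)qL}=e^{2qL}e^{-iqL}$, the first piece already has the form $Ce^{-iq'L}E(u_k,Q_2)$ for any $q'\le q$.

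On the middle and right pieces we repeat the argument verbatim. Iterating Lemma~\ref{lem:ratio of lambda} gives $\mathcal A_i\le(1+\epsilon')^{\mathfrak b-i}\mathcal A_{\mathfrak b}$. Combining this with $E(u_k,Q_i)\le\beta_0^{-2}\mathcal A_i^2$, with $\mathcal A_{\mathfrak b}^2\le CE(u_k,Q_{\mathfrak b})$ from Lemma~\ref{est.ahat}, and with the right-end decay yields
\[
E(u_k,Q_i)\le C(1+\epsilon')^{2(\mathfrak b-i)}e^{-(b-\mathfrak b)qL}E(u_k,Q_b),\qquad \mathfrak a\le i\le \mathfrak b.
\]
Dominance of $Q_b$ gives $E(u_k,Q_b)\le Ce^{-2q_1bL/3}A_k^{2/3}$. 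We write $b=i+(\mathfrak b-i)+(b-\mathfrak b)$ and choose $\epsilon'$ with $\delta_{\epsilon'}<2q_1/3$. The same exponent bookkeeping as before then gives $E(u_k,Q_i)\le Ce^{-2q_1iL/3}A_k^{2/3}$ on $[\mathfrak a,b]$. Taking $q'<\min\{q,2q_1/3\}$ and adding the two bounds gives the claim on all of $[2,b]$.

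The main obstacle is the degenerate configurations, which need separate checks.

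If $\lambda_j<\beta_0$ throughout, then there is no central region. Theorem~\ref{thm:3 circle thm for u} still gives the three-circle inequality everywhere, so $E(u_k,Q_i)\le C(e^{-(i-2)qL}E(u_k,Q_2)+e^{-(b-i)qL}E(u_k,Q_b))$, and the dominance bound on $Q_b$ finishes exactly as above.

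If the central region reaches $Q_2$, then $\mathfrak a=2$ and the left-end estimate is vacuous. This case is harmless, since the middle estimate only uses the right end and the bound via $Q_b$.

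Finally, one must check that the constants in Theorem~\ref{thm:long.cylinder} are independent of $b$. This holds because $L$, $\beta_0$ and $\epsilon_1$ depend only on $q$ and $\tau$.
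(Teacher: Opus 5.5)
Your proposal is correct and is exactly the adaptation the paper intends: the paper gives no separate proof, only ``Similarly'' with reference to the preceding lemma, and you run that argument with $E(u_k,Q_2)$ taking the place of the dominance bound at the left end. One endpoint case should be added to your degenerate cases, namely when the central region reaches $Q_b$ ($\mathfrak b=b$). There the bound $\mathcal A_b^2\le CE(u_k,Q_b)$ from Lemma~\ref{est.ahat} is not directly available, and you should instead use the trace bound with forcing, $\mathcal A_b\le C\bigl(\sqrt{E(u_k,Q_b)}+\|f_k\|_{L^p(Q_b)}\bigr)\le Ce^{-q_1bL/3}A_k^{1/3}$, which leaves the conclusion unchanged.
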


We now distinguish two cases.

\medskip
\noindent
\textbf{Case 1.} Suppose that at least one block \(Q_i\), \(2\le i\le m_k-1\), is dominant, and let \(a_k\) be the last dominant index. Then \(Q_i\) is nondominant for every \(a_k<i\le m_k-1\). By the preceding lemmas,
\[
E(u_k,Q_i)
\le Ce^{-q'iL}\bigl(A_k^{2/3}+E(u_k,Q_2)\bigr),
\qquad 2\le i\le a_k,
\]
where \(q'>0\) and \(C\) are independent of \(k\) and \(i\). Since \(A_k\to0\) and \(\sup_iE(u_k,Q_i)\to0\), it follows that
\[
\sum_{i=1}^{a_k}E(u_k,Q_i)\to0.
\]

On the remaining cylinder \([a_kL,m_kL]\times S^1\), every interior three-block region is centered at a nondominant block. If \(m_k-a_k\) is bounded, the energy on this region tends to zero by the uniform smallness of the block energies. Otherwise, after passing to a further subsequence, we may assume that \(m_k-a_k\to\infty\). Theorem \ref{main1} then applies on this region after translating the cylinder and rescaling the maps to preserve the weighted energy. In particular, the corresponding energy estimates and normalized local limits hold on \([a_kL,m_kL]\times S^1\). If the total energy is uniformly bounded, Corollary \ref{intr.cor} gives
\[
\lim_{T\to\infty}\lim_{k\to\infty}(E-E^*)(u_k,[a_kL+T,m_kL-T]\times S^1)=0.
\]

Moreover, suppose that $m=0$ and
\[
\Delta u_k
=F_{0,k}(u_k,\nabla u_k,\nabla u_k)+G_k+f_k,
\]
where
\[
\int_{Q_i}|G_k|
\le C\epsilon_k
E(u_k,Q_{i-1}\cup Q_i\cup Q_{i+1}),
\qquad \epsilon_k\to0,
\]
and
\[
F_{0,k}(p,X,Y)=\alpha_k^{ij}(p)X_iY_j,
\]
with uniformly bounded continuous coefficients $\alpha_k^{ij}$
converging uniformly to $\alpha^{ij}$.
If the lengths of the averaged curves on $[a_kL,m_kL]$
do not tend to zero and the base points chosen as in Theorem \ref{main2}, then, after passing to a subsequence and
reparametrizing by arc length, these curves converge in
$C^1_{\mathrm{loc}}$ to a limit curve satisfying the equation
in Theorem~\ref{main2}, with
$F_0(p,X,Y)=\alpha^{ij}(p)X_iY_j$.
Indeed, on the remaining cylinder, the forcing term $f_k$
can be absorbed into the remainder $G_k$.

\medskip
\noindent
\textbf{Case 2.} Suppose that \(Q_i\) is nondominant for every \(2\le i\le m_k-1\). Then Theorem \ref{main1} applies directly on the whole cylinder \([0,m_kL]\times S^1\). As in Case 1, Corollary \ref{intr.cor} and Theorem \ref{main2} also apply on this cylinder under their respective additional assumptions.


\section{Approximation of the energy}\label{sec:approx}

We finally record a simple extension of the preceding results which
will be useful for variational approximations of the Dirichlet energy.
For simplicity, we restrict ourselves to the case $m=0$.

Let
\[
E(u,Q)=\int_Q |\nabla u|^2
\]
and suppose that, for each $k$, we are given an energy
\[
E_k(u,Q)=E(u,Q)+R_k(u,Q),
\]
where $R_k$ is nonnegative and additive on disjoint cylindrical
regions. We assume that
\begin{equation}\label{eq:approx.energy}
0\leq R_k(u,Q)
\leq
\epsilon_k\int_Q |\nabla^2u|^2,
\qquad
\epsilon_k\to0.
\end{equation}
We further assume that the analogues of
\eqref{almost.harmonic.1} and \eqref{almost.harmonic.2} hold with
$E$ replaced by $E_k$, and that
\begin{equation}\label{eq:approx.hessian}
\int_{Q_i}|\nabla^2u_k|^2
\leq
C E_k(u_k,Q_{i-1}\cup Q_i\cup Q_{i+1}),
\end{equation}
where $C$ is independent of $i$ and $k$.

We keep the definition of the critical energy unchanged. Namely, if
\[
u_k^*(t)
=
\frac{1}{2\pi}\int_{S^1}u_k(t,\theta)\,d\theta,
\]
we set
\[
E_k^*(u_k,Q_{a,b})
:=
E^*(u_k,Q_{a,b})
=
2\pi\int_a^b |\dot u_k^*(t)|^2\,dt.
\]
Thus
\[
0\leq E_k^*\leq E\leq E_k,
\]
and
\[
E_k-E_k^*
=
(E-E^*)+R_k.
\]
Accordingly, we define
\[
\lambda_{k,i}
=
\frac{2|\dot u_k^*(t_i)|}
     {E_k(u_k,Q_i)^{1/2}},
\qquad
\nu_{k,i}
=
\frac{(E_k-E_k^*)(u_k,Q_i)^{1/2}}
     {2|\dot u_k^*(t_i)|},
\]
where
\[
t_i=\left(i-\frac12\right)L.
\]

\begin{pro}\label{pro:approx.energy.3circle}
Under the above assumptions, all the three-circle estimates established
in the preceding sections remain valid with
\[
E,\qquad E-E^*,\qquad \lambda_i,\qquad \nu_i
\]
replaced respectively by
\[
E_k,\qquad E_k-E_k^*,\qquad
\lambda_{k,i},\qquad \nu_{k,i}.
\]
Consequently, Theorem~\ref{main1}, Corollary~\ref{intr.cor}, and
Theorem~\ref{main2} remain valid, with the corresponding occurrences
of $E$ replaced by $E_k$.
\end{pro}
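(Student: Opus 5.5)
The plan is to re-run each contradiction argument of Section~\ref{sec:3-circle Lemma for almost hm} with $E_k$ in place of $E$. The one new ingredient is that the normalized remainder $R_k$ disappears in the limit. The algebraic and iterative parts of the earlier proofs use only the three-circle inequalities themselves, together with Lemma~\ref{est.ahat} and Lemma~\ref{lem:ratio of lambda}. These are Theorem~\ref{thm:long.cylinder}, the case analysis in the proof of Theorem~\ref{main1}, Corollary~\ref{intr.cor} and the curve analysis of Theorem~\ref{main2}. So it suffices to check two things: the analogues of Lemma~\ref{est.ahat} and Lemma~\ref{lem:ratio of lambda}, and the analogues of Theorems~\ref{thm:3 circle thm for u} and \ref{thm: 3 circle thm for u-u*}.

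For Lemma~\ref{est.ahat}, the trace estimate gives $2|\dot u_k^*(t_i)|\le C\sqrt{E(u_k,Q_i)}\le C\sqrt{E_k(u_k,Q_i)}$, since $E\le E_k$. The difference estimate comes from \eqref{diff.Am} and the modified \eqref{almost.harmonic.2}, whose right-hand side is already expressed through $E_k$. Lemma~\ref{lem:ratio of lambda} then follows verbatim, because its proof only used these two estimates and the definition of $\lambda_{k,i}$.

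For the analogue of Theorem~\ref{thm:3 circle thm for u}, I argue by contradiction and normalize $v_k=(u_k-\bar u_k)/\sqrt{E_k(u_k,Q_2)}$. Then $E_k(v_k,Q_i)\le C$, and the modified \eqref{almost.harmonic.1} gives $W^{2,p}_{\rm loc}$ bounds, so $v_k\rightharpoonup v$ with $v$ harmonic. The key step is to show $E(v,Q_2)=1$, that is, that the normalized remainder vanishes. By \eqref{eq:approx.energy} and \eqref{eq:approx.hessian},
\[
R_k(v_k,Q_2)\le \epsilon_k\int_{Q_2}|\nabla^2 v_k|^2\le C\epsilon_k E_k(v_k,Q)\le C\epsilon_k\to0 .
\]
Hence $E(v_k,Q_2)\to1$, and strong $W^{1,2}_{\rm loc}$ convergence transfers this to $v$. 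I expect the main obstacle here. The convergence is only local, on $Q_{\delta,3L-\delta}$, so the energy on $Q_2$ is fine, but for the outer blocks $Q_1,Q_3$ I only need lower semicontinuity. For that it is enough to use $E(v,Q_j)\le\liminf E(v_k,Q_j)\le\liminf E_k(v_k,Q_j)$, which is consistent with $R_k\ge0$. Therefore
\[
e^{-qL}\bigl(E(v,Q_1)+E(v,Q_3)\bigr)\le E(v,Q_2)=1,
\]
and, since $\lambda_{i_0}(v)=0$ in the limit, this contradicts the harmonic three-circle lemma.

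The analogue of Theorem~\ref{thm: 3 circle thm for u-u*} works the same way, with $E_k-E_k^*=(E-E^*)+R_k$. After normalizing so that $(E_k-E_k^*)(v_k,Q_2)=1$, the bounds on $\mathcal A_i$ and $\lambda_{k,i}\ge\beta_0$ again give $E_k(v_k,Q)\le C$. This kills $R_k(v_k,Q_2)$ as above and yields $(E-E^*)(v,Q_2)=1$. The statement for $\nu_{k,i}^2$ then follows from Lemma~\ref{lem:ratio of lambda} exactly as before.

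In the proof of Theorem~\ref{main1}, the identity $1-E^*/E=\nu_i^2\lambda_i^2$ becomes $1-E_k^*/E_k=\nu_{k,i}^2\lambda_{k,i}^2$. The step comparing $\tfrac{\pi L}{2}\lambda^2$ with $E^*/E$ is unchanged. In the blow-up limit, $R_k$ of the normalized maps again tends to zero on compact sets. Finally, Lemma~\ref{lem:est.a0} and the length estimates only use $\int|\nabla_\theta u|^2\le E_k-E_k^*$ and $E\le E_k$. Hence Corollary~\ref{intr.cor} and Theorem~\ref{main2} carry over.
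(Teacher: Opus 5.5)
Your proposal is correct and follows essentially the same route as the paper. In each contradiction argument you use \eqref{eq:approx.energy} and \eqref{eq:approx.hessian} to bound the normalized remainder by $C\epsilon_k$ times the normalized three-block energy, so the blow-up limit is the same harmonic function as before and the iterative arguments carry over unchanged. In the $E_k-E_k^*$ case you are somewhat more explicit than the paper: you derive the three-block bound on $E_k$ from $\nu_{k,i'}>a$, Lemma~\ref{lem:ratio of lambda} and $\lambda_{k,i}\ge\beta_0$, whereas the paper simply asserts it.
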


\begin{proof}
We only need to verify that the additional term $R_k$ disappears in
the contradiction argument for the three-circle inequalities.

Indeed, suppose that a three-circle inequality fails and normalize by
\[
c_k^2=E_k(u_k,Q_2),
\qquad
v_k=\frac{u_k-\bar u_k}{c_k}.
\]
As in the proof of the original three-circle lemma, the failure of the
inequality implies
\[
E_k(u_k,Q_1\cup Q_2\cup Q_3)
\leq
C E_k(u_k,Q_2).
\]
The analogues of \eqref{almost.harmonic.1} and
\eqref{almost.harmonic.2} therefore imply, after passing to a
subsequence,
\[
v_k\rightharpoonup v
\quad\text{in }W^{2,p}_{\mathrm{loc}},
\qquad
v_k\to v
\quad\text{in }W^{1,2}_{\mathrm{loc}},
\]
where $v$ is harmonic.

On the other hand, by \eqref{eq:approx.energy} and
\eqref{eq:approx.hessian},
\[
\frac{R_k(u_k,Q_2)}{c_k^2}
\leq
C\epsilon_k
\frac{
E_k(u_k,Q_{1}\cup Q_2\cup Q_{3})
}{
E_k(u_k,Q_2)
}
\to 0.
\]
Hence
\[
\frac{E_k(u_k,Q_2)}{c_k^2}
\to
E(v,Q_2),
\]
and, since $E_k^*=E^*$,
\[
\frac{E_k^*(u_k,Q_2)}{c_k^2}
\to
E^*(v,Q_2).
\]
Therefore
\[
\frac{(E_k-E_k^*)(u_k,Q_2)}{c_k^2}
\to
(E-E^*)(v,Q_2).
\]

Thus every contradiction sequence converges to exactly the same
harmonic limit as in the proof of the original three-circle lemmas.
The proofs of the three-circle inequalities are therefore unchanged.
The long-cylinder decomposition and the proofs of
Theorem~\ref{main1}, Corollary~\ref{intr.cor}, and
Theorem~\ref{main2} depend only on these three-circle inequalities and
the corresponding almost-harmonic estimates. Hence the same arguments
apply with $E$ replaced by $E_k$.
\end{proof}

In particular, if
\[
E_k(u_k,S^1\times[0,m_kL])\leq\Lambda,
\]
then
\[
\lim_{T\to+\infty}\lim_{k\to\infty}
(E_k-E_k^*)
\bigl(
u_k,[T,m_kL-T]\times S^1
\bigr)
=0.
\]
Moreover, in the nontrivial central region,
\[
\lim_{r\to+\infty}\lim_{k\to\infty}
\max_{\mathfrak a_k+r<i<\mathfrak b_k-r}
\left(
\left|\frac{\pi L}{2}\lambda_{k,i}^2-1\right|
+\nu_{k,i}
+
\left|
\frac{E_k^*(u_k,Q_i)}
     {E_k(u_k,Q_i)}
-1
\right|
\right)
=0.
\]
After normalization at any sequence of cylinders staying away from the
two ends of the central region, the maps converge locally to a linear
function
\[
t\longmapsto At.
\]

We now apply Proposition~\ref{pro:approx.energy.3circle} to the
biharmonic approximation introduced by Lamm \cite{Lamm2006}. Let
$(\Sigma,g)$ be a closed Riemannian surface and let $N$ be a compact
Riemannian manifold isometrically embedded in $\mathbb R^\ell$. For
$\epsilon>0$, consider
\[
E_\epsilon(u)
=
\int_\Sigma
\left(
|\nabla_g u|^2
+
\epsilon |\Delta_g u|^2
\right)\,dV_g.
\]
Let $u_k$ be a sequence of critical points of $E_{\epsilon_k}$ with
\[
\epsilon_k\to0,
\qquad
E_{\epsilon_k}(u_k)\leq \Lambda.
\]

Assume that $0$ is a concentration point and that $r_k\to0$ is a
bubbling scale. Write \(g=e^{2\varphi}g_{\mathrm{euc}}\) in a conformal coordinate disk centered at \(0\). By Lamm’s blow-up analysis,
\[
\frac{\epsilon_k}{r_k^2}\to 0.
\]
We consider the neck region
\[
D_\delta\setminus D_{r_k/\delta}
\]
and introduce cylindrical coordinates \(t=-\log r\). Then
\[
D_\delta\setminus D_{r_k/\delta}
\cong
S^1\times
\left[
-\log\delta,
-\log\frac{r_k}{\delta}
\right].
\]
In these coordinates, the biharmonic part of the energy becomes
\[
\epsilon_k
\int
e^{2t-2\varphi}|\Delta u_k|^2\,dtd\theta.
\]
Thus, on a cylindrical region $Q$, we set
\[
E_k(u_k,Q)
=
\int_Q |\nabla u_k|^2\,dtd\theta
+
R_k(u_k,Q),
\]
where
\[
R_k(u_k,Q)
=
\epsilon_k
\int_Q e^{2t-2\varphi}|\Delta u_k|^2\,dtd\theta.
\]
Since \(t\le-\log(r_k/\delta)\) on the neck region, we have
\[
\epsilon_ke^{2t}
\le\delta^2\frac{\epsilon_k}{r_k^2}
=:\eta_k,
\qquad
\eta_k\to0.
\]
Since \(e^{-2\varphi}\) is bounded on the coordinate region,
\[
0\leq R_k(u_k,Q)
\leq
C\eta_k\int_Q |\Delta u_k|^2
\leq
C\eta_k\int_Q |\nabla^2u_k|^2.
\]
In particular, the biharmonic energy is of the form considered in
Proposition~\ref{pro:approx.energy.3circle}, with the small parameter
$\eta_k$ independent of the position of the cylinder $Q$ in the neck.

Moreover, the local small-energy estimates of Lamm imply that, on
every three consecutive cylinders,
\[
\int_{Q_i}|\nabla^2u_k|^2
\leq
C E_k(u_k,Q_{i-1}\cup Q_i\cup Q_{i+1}),
\]
and, for every fixed \(0<r<\frac{3L}{2}\),
\[
\|\nabla^2u_k\|_{L^p(S^1\times[(i-2)L+r,(i+1)L-r])}
\le C(r,p)
E_k(u_k,Q_{i-1}\cup Q_i\cup Q_{i+1})^{1/2}.
\]
Furthermore, using the Euler--Lagrange equation together with the same
local estimates, one obtains
\[
\|\Delta u_k\|_{L^1(S^1\times[(i-2)L+r,(i+1)L-r])}
\le\tau_k(r)
E_k(u_k,Q_{i-1}\cup Q_i\cup Q_{i+1})^{1/2}.
\]
where \(\tau_k(r)\to 0\) uniformly in $i$.

Hence all the assumptions of
Proposition~\ref{pro:approx.energy.3circle} are satisfied. Therefore
the three-circle estimates, the long-cylinder decomposition, and the
conclusions of Theorem~\ref{main1}, Corollary~\ref{intr.cor}, and
Theorem~\ref{main2} apply to the biharmonic approximation with $E$
replaced by $E_k$. Moreover, the neck consists geodesic if it exists.

\bibliographystyle{amsalpha}
\bibliography{references}

@article{JS09,
  author = {Jost, J\"urgen and Muazzez \c{S}im\c{s}ir, Fatma},
  title = {Affine harmonic maps},
  journal = {Analysis (Munich)},
  volume = {29},
  year = {2009},
  number = {2},
  pages = {185--197},
  issn = {0174-4747},
  mrnumber = {2554637},
  doi = {10.1524/anly.2009.1050},
  url = {https://doi.org/10.1524/anly.2009.1050},
}

@misc{LiYinZhou2024,
  author = {Li, Yuxiang and Yin, Hao and Zhou, Jie},
  title = {3-circle theorem for {W}illmore surfaces {II}---degeneration of the complex structure},
  year = {2024},
  note = {arXiv:2411.06453},
  doi = {10.48550/arXiv.2411.06453},
  url = {https://arxiv.org/abs/2411.06453},
}

@article{Zhu2010,
  author = {Zhu, Miaomiao},
  title = {Harmonic maps from degenerating {R}iemann surfaces},
  journal = {Math. Z.},
  fjournal = {Mathematische Zeitschrift},
  volume = {264},
  year = {2010},
  number = {1},
  pages = {63--85},
  issn = {0025-5874,1432-1823},
  mrclass = {58E20},
  mrnumber = {2564932},
  mrreviewer = {Andreas\ Gastel},
  doi = {10.1007/s00209-008-0452-0},
  url = {https://doi.org/10.1007/s00209-008-0452-0},
}

@article{WangWeiZhang2017,
  author = {Wang, Wendong and Wei, Dongyi and Zhang, Zhifei},
  title = {Energy identity for approximate harmonic maps from surfaces to general targets},
  journal = {J. Funct. Anal.},
  fjournal = {Journal of Functional Analysis},
  volume = {272},
  year = {2017},
  number = {2},
  pages = {776--803},
  issn = {0022-1236,1096-0783},
  mrclass = {58E20 (35J60 35R01 53C43)},
  mrnumber = {3571909},
  mrreviewer = {Deliang\ Xu},
  doi = {10.1016/j.jfa.2016.09.018},
  url = {https://doi.org/10.1016/j.jfa.2016.09.018},
}

@article{SacksUhlenbeck1981,
  author = {Sacks, J. and Uhlenbeck, K.},
  title = {The existence of minimal immersions of {$2$}-spheres},
  journal = {Ann. of Math. (2)},
  fjournal = {Annals of Mathematics. Second Series},
  volume = {113},
  year = {1981},
  number = {1},
  pages = {1--24},
  issn = {0003-486X},
  mrclass = {58E12 (53C42 58E20)},
  mrnumber = {604040},
  mrreviewer = {John\ C.\ Wood},
  doi = {10.2307/1971131},
  url = {https://doi.org/10.2307/1971131},
}

@article{SharpTopping2013,
  author = {Sharp, Ben and Topping, Peter},
  title = {Decay estimates for {R}ivi\`ere's equation, with applications to regularity and compactness},
  journal = {Trans. Amer. Math. Soc.},
  fjournal = {Transactions of the American Mathematical Society},
  volume = {365},
  year = {2013},
  number = {5},
  pages = {2317--2339},
  issn = {0002-9947,1088-6850},
  mrclass = {42B37 (35J47)},
  mrnumber = {3020100},
  mrreviewer = {Shaolei\ Ru},
  doi = {10.1090/S0002-9947-2012-05671-6},
  url = {https://doi.org/10.1090/S0002-9947-2012-05671-6},
}

@incollection{Schoen1984,
  author = {Schoen, Richard M.},
  title = {Analytic aspects of the harmonic map problem},
  booktitle = {Seminar on nonlinear partial differential equations ({B}erkeley, {C}alif., 1983)},
  series = {Math. Sci. Res. Inst. Publ.},
  volume = {2},
  pages = {321--358},
  publisher = {Springer, New York},
  year = {1984},
  isbn = {0-387-96079-1},
  mrclass = {58E20},
  mrnumber = {765241},
  mrreviewer = {Helmut\ Kaul},
  doi = {10.1007/978-1-4612-1110-5\_17},
  url = {https://doi.org/10.1007/978-1-4612-1110-5_17},
}

@article{Riviere2007,
  author = {Rivi\`ere, Tristan},
  title = {Conservation laws for conformally invariant variational problems},
  journal = {Invent. Math.},
  fjournal = {Inventiones Mathematicae},
  volume = {168},
  year = {2007},
  number = {1},
  pages = {1--22},
  issn = {0020-9910,1432-1297},
  mrclass = {58E20 (35J60)},
  mrnumber = {2285745},
  mrreviewer = {Andreas\ Gastel},
  doi = {10.1007/s00222-006-0023-0},
  url = {https://doi.org/10.1007/s00222-006-0023-0},
}

@article{Parker1996,
  author = {Parker, Thomas H.},
  title = {Bubble tree convergence for harmonic maps},
  journal = {J. Differential Geom.},
  fjournal = {Journal of Differential Geometry},
  volume = {44},
  year = {1996},
  number = {3},
  pages = {595--633},
  issn = {0022-040X,1945-743X},
  mrclass = {58E20},
  mrnumber = {1431008},
  mrreviewer = {Daniel\ Pollack},
  url = {http://projecteuclid.org/euclid.jdg/1214459224},
}

@article{Riviere2008,
  author = {Rivi\`ere, Tristan},
  title = {Analysis aspects of {W}illmore surfaces},
  journal = {Invent. Math.},
  fjournal = {Inventiones Mathematicae},
  volume = {174},
  year = {2008},
  number = {1},
  pages = {1--45},
  issn = {0020-9910,1432-1297},
  mrclass = {53C42 (35J60 53C45)},
  mrnumber = {2430975},
  mrreviewer = {Andreas\ Gastel},
  doi = {10.1007/s00222-008-0129-7},
  url = {https://doi.org/10.1007/s00222-008-0129-7},
}

@article{LaurainRiviere2014,
  author = {Laurain, Paul and Rivi\`ere, Tristan},
  title = {Angular energy quantization for linear elliptic systems with antisymmetric potentials and applications},
  journal = {Anal. PDE},
  fjournal = {Analysis \& PDE},
  volume = {7},
  year = {2014},
  number = {1},
  pages = {1--41},
  issn = {2157-5045,1948-206X},
  mrclass = {35J20 (32Q65 35B45 35B65 35J57 53C21 53D50 58E20)},
  mrnumber = {3219498},
  mrreviewer = {Siegfried\ Carl},
  doi = {10.2140/apde.2014.7.1},
  url = {https://doi.org/10.2140/apde.2014.7.1},
}

@article{Lamm2006,
  author = {Lamm, Tobias},
  title = {Fourth order approximation of harmonic maps from surfaces},
  journal = {Calc. Var. Partial Differential Equations},
  fjournal = {Calculus of Variations and Partial Differential Equations},
  volume = {27},
  year = {2006},
  number = {2},
  pages = {125--157},
  issn = {0944-2669,1432-0835},
  mrclass = {58E20 (35J60)},
  mrnumber = {2251990},
  mrreviewer = {T\^oru\ Nakajima},
  doi = {10.1007/s00526-005-0001-1},
  url = {https://doi.org/10.1007/s00526-005-0001-1},
}

@article{KuwertSchatzle2001,
  author = {Kuwert, Ernst and Sch\"atzle, Reiner},
  title = {The {W}illmore flow with small initial energy},
  journal = {J. Differential Geom.},
  fjournal = {Journal of Differential Geometry},
  volume = {57},
  year = {2001},
  number = {3},
  pages = {409--441},
  issn = {0022-040X,1945-743X},
  mrclass = {53C44},
  mrnumber = {1882663},
  mrreviewer = {Shu-Cheng\ Chang},
  url = {http://projecteuclid.org/euclid.jdg/1090348128},
}

@article{DingTian1995,
  author = {Ding, Weiyue and Tian, Gang},
  title = {Energy identity for a class of approximate harmonic maps from surfaces},
  journal = {Comm. Anal. Geom.},
  fjournal = {Communications in Analysis and Geometry},
  volume = {3},
  year = {1995},
  number = {3-4},
  pages = {543--554},
  issn = {1019-8385,1944-9992},
  mrclass = {58E20 (58G11)},
  mrnumber = {1371209},
  mrreviewer = {Joseph\ F.\ Grotowski},
  doi = {10.4310/CAG.1995.v3.n4.a1},
  url = {https://doi.org/10.4310/CAG.1995.v3.n4.a1},
}

@article{DingLiLiu2006,
  author = {Ding, Weiyue and Li, Jiayu and Liu, Qingyue},
  title = {Evolution of minimal torus in {R}iemannian manifolds},
  journal = {Invent. Math.},
  fjournal = {Inventiones Mathematicae},
  volume = {165},
  year = {2006},
  number = {2},
  pages = {225--242},
  issn = {0020-9910,1432-1297},
  mrclass = {53C21 (53C43 53C44 58E20)},
  mrnumber = {2231956},
  mrreviewer = {Weimin\ Sheng},
  doi = {10.1007/s00222-005-0491-7},
  url = {https://doi.org/10.1007/s00222-005-0491-7},
}

@article{QingTian1997,
  author = {Qing, Jie and Tian, Gang},
  title = {Bubbling of the heat flows for harmonic maps from surfaces},
  journal = {Comm. Pure Appl. Math.},
  fjournal = {Communications on Pure and Applied Mathematics},
  volume = {50},
  year = {1997},
  number = {4},
  pages = {295--310},
  issn = {0010-3640,1097-0312},
  mrclass = {58E20 (58G11)},
  mrnumber = {1438148},
  mrreviewer = {Daniel\ Pollack},
  doi = {10.1002/(SICI)1097-0312(199704)50:4<295::AID-CPA1>3.0.CO;2-5},
  url = {https://doi.org/10.1002/(SICI)1097-0312(199704)50:4<295::AID-CPA1>3.0.CO;2-5},
}

@article{ChenTian1999,
  author = {Chen, Jingyi and Tian, Gang},
  title = {Compactification of moduli space of harmonic mappings},
  journal = {Comment. Math. Helv.},
  fjournal = {Commentarii Mathematici Helvetici},
  volume = {74},
  year = {1999},
  number = {2},
  pages = {201--237},
  issn = {0010-2571,1420-8946},
  mrclass = {58E20 (58D27)},
  mrnumber = {1691947},
  mrreviewer = {Chiung-Jue\ Sung},
  doi = {10.1007/s000140050086},
  url = {https://doi.org/10.1007/s000140050086},
}

@article{BayerRoberts2026,
  author = {Bayer, Carolin and Roberts, Andrew M.},
  title = {Energy identity and no neck property for {$\varepsilon $}-harmonic and {$\alpha $}-harmonic maps into homogeneous target manifolds},
  journal = {Calc. Var. Partial Differential Equations},
  fjournal = {Calculus of Variations and Partial Differential Equations},
  volume = {65},
  year = {2026},
  number = {2},
  pages = {Paper No. 57, 18},
  issn = {0944-2669,1432-0835},
  mrclass = {58E20 (53C43)},
  mrnumber = {5015190},
  mrreviewer = {Xiangrong\ Zhu},
  doi = {10.1007/s00526-025-03233-w},
  url = {https://doi.org/10.1007/s00526-025-03233-w},
}

@article{LiWang2010,
  author = {Li, Yuxiang and Wang, Youde},
  title = {A weak energy identity and the length of necks for a sequence of {S}acks-{U}hlenbeck {$\alpha$}-harmonic maps},
  journal = {Adv. Math.},
  fjournal = {Advances in Mathematics},
  volume = {225},
  year = {2010},
  number = {3},
  pages = {1134--1184},
  issn = {0001-8708,1090-2082},
  mrclass = {58E20 (35J62)},
  mrnumber = {2673727},
  mrreviewer = {Andreas\ Gastel},
  doi = {10.1016/j.aim.2010.03.020},
  url = {https://doi.org/10.1016/j.aim.2010.03.020},
}

@article{ChenLi2014a,
  author = {Chen, Jingyi and Li, Yuxiang},
  title = {Homotopy classes of harmonic maps of the stratified 2-spheres and applications to geometric flows},
  journal = {Adv. Math.},
  fjournal = {Advances in Mathematics},
  volume = {263},
  year = {2014},
  pages = {357--388},
  issn = {0001-8708,1090-2082},
  mrclass = {53C44 (53C43 55Q05)},
  mrnumber = {3239142},
  mrreviewer = {Yongbing\ Zhang},
  doi = {10.1016/j.aim.2014.07.001},
  url = {https://doi.org/10.1016/j.aim.2014.07.001},
}

@article{ChenLi2014,
  author = {Chen, Jingyi and Li, Yuxiang},
  title = {Bubble tree of branched conformal immersions and applications to the {W}illmore functional},
  journal = {Amer. J. Math.},
  fjournal = {American Journal of Mathematics},
  volume = {136},
  year = {2014},
  number = {4},
  pages = {1107--1154},
  issn = {0002-9327,1080-6377},
  mrclass = {53C42 (53A10 53A30)},
  mrnumber = {3245189},
  mrreviewer = {Zejun\ Hu},
  doi = {10.1353/ajm.2014.0023},
  url = {https://doi.org/10.1353/ajm.2014.0023},
}

@article{LiYin2025,
  author = {Li, Yuxiang and Yin, Hao},
  title = {3-circle theorem for {W}illmore surface {I}},
  journal = {J. Lond. Math. Soc. (2)},
  fjournal = {Journal of the London Mathematical Society. Second Series},
  volume = {111},
  year = {2025},
  number = {5},
  pages = {Paper No. e70165, 28},
  issn = {0024-6107,1469-7750},
  mrclass = {53A05 (53C21)},
  mrnumber = {4899930},
  mrreviewer = {Dorian\ Martino},
  doi = {10.1112/jlms.70165},
  url = {https://doi.org/10.1112/jlms.70165},
}

@article{Branding2015,
  author = {Branding, Volker},
  title = {Magnetic {Dirac}-harmonic maps},
  journal = {Anal. Math. Phys.},
  fjournal = {Analysis and Mathematical Physics},
  volume = {5},
  year = {2015},
  number = {1},
  pages = {23--37},
  issn = {1664-2368,1664-235X},
  doi = {10.1007/s13324-014-0081-1},
  url = {https://doi.org/10.1007/s13324-014-0081-1},
}

\end{document}